\numberwithin{equation}{section}
\theoremstyle{plain}
\newtheorem{theorem}{Theorem}[section]
\newtheorem{lemma}[theorem]{Lemma}
\newtheorem{remark}[theorem]{Remark}
\newtheorem{example}[theorem]{Example}
\begin{document}

\begin{frontmatter}
\title{{Vector-valued statistics of binomial processes: \\ Berry-Esseen bounds in the convex distance}}
\runtitle{Multivariate normal approximations}

\begin{aug}
\author[A, B]{\fnms{Miko{\l}aj J.} \snm{Kasprzak}\ead[label=e2]{mikolaj.kasprzak@uni.lu}}
\and
\author[B]{\fnms{Giovanni} \snm{Peccati}\ead[label=e3]{giovanni.peccati@uni.lu}}
\runauthor{M.J. Kasprzak and G. Peccati}

\affiliation{University of Luxembourg}

\address[A]{Massachusetts Institute of Technology\\
Laboratory for Information and Decision Systems\\
77 Massachusetts Avenue\\
Cambridge, MA 02139\\
United States of America}
\address[B]{University of Luxembourg\\
Department of Mathematics\\
Maison du Nombre\\
6 Avenue de la Fonte\\
L-4364 Esch-sur-Alzette\\
Luxembourg\\}
\phantom{E-mail:\ }
\end{aug}

\begin{abstract} {We study the discrepancy between the distribution of a vector-valued functional of i.i.d. random elements and that of a Gaussian vector. Our main contribution is an explicit bound on the convex distance between the two distributions, holding in every dimension. Such a finding constitutes a substantial extension of the one-dimensional bounds deduced in Chatterjee (2007) and Lachi\`eze-Rey and Peccati (2017), as well as of the multidimensional bounds for smooth test functions and indicators of rectangles derived, respectively, in Dung (2019), and Fang and Koike (2021). Our techniques involve the use of Stein's method, combined with a suitable adaptation of the recursive approach inaugurated by Schulte and Yukich (2017): this yields rates of converge that have a presumably optimal dependence on the sample size. We develop several applications of a geometric nature, among which is a new collection of multidimensional quantitative limit theorems for the intrinsic volumes associated with coverage processes in Euclidean spaces. }

\end{abstract}

\begin{keyword}[class=MSC]
\kwd[Primary ]{60B10}
\kwd{60F17}
\kwd[; secondary ]{60B12, 60J65, 60E05, 60E15}
\end{keyword}

\begin{keyword}
\kwd{Stein's method}
\kwd{convex distance}
\kwd{binomial process}
\kwd{Boolean model}
\end{keyword}
\end{frontmatter}
\section{Introduction}

\subsection{Overview}
{ Let $X = (X_1,...,X_n)$ be a vector of independent random elements with values in a measurable space $(\mathcal{X}, \mathbb{X})$, and let $f : \mathcal{X}^n \to \mathbb{R}^d$, $d\geq 1$, be a measurable mapping. The aim of this paper is to extend the techniques introduced in \cite{new_stein, peccati_lachieze-rey2017} (in the case $d=1$) in order to provide explicit bounds on the distance between the distribution of the $d$-dimensional random vector $f(X)$, and that of a suitable Gaussian element. Our principal contribution is an explicit bound on the {\bf convex distance} between the two distributions (see Theorem \ref{main3} below), holding for arbitrary values of $d$. As discussed in Section \ref{ss:comp} below, such findings substantially refine and complement the recent estimates from \cite{dung, FK_21}, where bounds on the normal approximation of $f(X)$ are obtained for smooth test functions of class $C^2$ and $C^3$ (see  \cite[Theorems 3.1 and 3.2]{dung}, as well as Theorem \ref{main} of the present paper), and for indicators of hyperrectangles (see \cite[Theorem 1.3]{FK_21}). }

\medskip

{ Our main results are based on the use of the multidimensional {\bf Stein's method} for normal approximations (see e.g. \cite[Chapter 12]{normal_approx}, and \cite[Chapter 4]{nourdin}), which is combined with a recursive method devised for dealing with indicators of convex sets, inaugurated in \cite{schulte_yukich} in the context of geometric functionals of Poisson random measures --- see \cite{lrpy, NPY_jtp, schulte_yukich_21} for further applications of this methodology. An important point (see again Section \ref{ss:comp}) is that the use of the approach of \cite{schulte_yukich} allows one to remove from the bounds all spurious logarithmic dependencies in the parameter $n$ (sample size), such as those appearing e.g. in the main bounds of \cite{FK_21, RR96}. Further references exploiting Stein's method in the context of multivariate normal approximations are \cite{diffusion, meckes, FK_21b, FK_21c, gotze, meckes09, RR96, rinott, reinert_roellin, reinert_roellin1, SS_05}. }

\medskip

We demonstrate the flexibility and scope of our results by developing two applications:

\begin{enumerate}

\item[(i)] in Section \ref{ss:covering}, to a multivariate central limit theorem (CLT) for the intrinsic volumes associated with the Euclidean Boolean model over increasing domains --- thus extending the findings of \cite{new_stein, goldstein_penrose, peccati_lachieze-rey2017} (which contain one-dimensional CLTs Euclidean volumes) and \cite{hug_last_schulte, schulte_yukich} (where multidimensonal CLTs are proved in the context of the Poisson-based Boolean model);

\item[(ii)] in Section \ref{ss:locdep}, to multivariate quantitative CLTs for statistics depending on $k$-nearest neighbour graphs, thus recovering a multivariate version of central results from \cite{new_stein, BB}. 

\end{enumerate}

%
%
%
%

\subsection{Structure of the paper}  Section \ref{section_notation} discusses the notational conventions adopted throughout the paper.  Our main abstract results are stated and discussed in Section \ref{s:main}. Section \ref{ss:covering} and Section \ref{ss:locdep} are devoted, respectively, to our main findings for coverage processes and for structures with local dependence.  The proofs of our abstract results are presented in full detail in Section \ref{proofs_main}, whereas Section \ref{covering process proofs} and Section \ref{proofs_ap} deal, respectively, with the proofs of our results for intrinsic volumes of Boolean models, and the proofs of quantitative CLTs for functionals of structures with local dependence. 
\subsection{Setup and notation}\label{section_notation}
\underline{\it Standard conventions.} For any $d\in\mathbbm{N}$, $\mathbbm{R}^d$ is always equipped with the Euclidean topology, the brackets $\left<\cdot,\cdot\right>$ denote the Euclidean inner product and $\|\cdot\|$ stands for the Euclidean norm (the dependence on the dimension is removed to simplify the notation). The symbol $\left<\cdot,\cdot\right>_{H.S.}$ indicates the Hilbert-Schmidt inner product, $\|\cdot\|_{H.S.}$ is the Hilbert-Schmidt norm and $\|\cdot\|_{op}$ is the (Euclidean) operator norm. Given $d\in\mathbbm{N}$ and a mapping $F:\mathbbm{R}^d\to\mathbbm{R}$ sufficiently smooth, we write $\text{Hess}(F)(x)$ for the Hessian of $F$ at $x\in \mathbbm{R}^d$ and, for $k\geq 0$, we use the symbol $D^kF$ to indicate the $k$-th (Fr\'echet) derivative of $F$, that is: $D^kF(x)$ stands is the $k$-linear form on $\mathbbm{R}^d$ given by $$D^kF(x)[u_1,\dots,u_k]:=\sum_{i_1,\dots,i_k=1}^d\frac{\partial^kF}{\partial x_{i_1}\dots\partial x_{i_k}}(x)(u_1)_{i_1}\dots(u_k)_{i_k},$$
for $x,u_1\dots,u_k\in\mathbbm{R}^d$ and $(u_i)_j$ denoting the $j$-th component of the vector $u_i$. We also set
\begin{equation}\label{e:mnorms}
M_k(F):=\sup_{x\in\mathbbm{R}^d}\left\|D^kF(x)\right\|_{op},\quad \tilde{M}_2(F):=\sup_{x\in\mathbbm{R}^d}\left\|\text{Hess}(F)(x)\right\|_{H.S.}.
\end{equation}
Objects such as $M_k(F)$ and $\tilde{M}_2(F)$ (and variations thereof) play an important role in several applications of the multidimensional Stein's method --- see \cite{DP_ejp, meckes, meckes09, reinert_roellin} for a sample. Every random object appearing below is assumed to be defined on a common probability space $(\Omega, \mathcal{F}, \mathbbm{P})$, with $\mathbbm{E}$ denoting expectation with respect to $\mathbbm{P}$. 

\smallskip

\noindent\underline{\it Convex distance.} Given $d\in\mathbbm{N}$ and random vectors $A,B$ with values in $\mathbbm{R}^d$, we define the {\bf convex distance} between the distributions of $A$ and $B$ as
\begin{equation}\label{e:convex}
d_{\text{convex}}(A,B):=\sup_{h\in\mathcal{I}_d}\left|\mathbbm{E}h(A)-\mathbbm{E}h(B)\right|,
\end{equation}
where $\mathcal{I}_d$ is the set of all indicator functions of measurable convex subsets of $\mathbbm{R}^d$. For $n\in\mathbbm{N}$, we also write $[n]:=\{1,\dots,n\}$.

\smallskip

\noindent\underline{\it Difference operators.} In this paper, we will extensively use some discrete operators defined as the difference between a given function of $X =(X_1,...,X_n)$ (or of some perturbation of it) and the same function computed on some partial resampling of $X$. These operators naturally emerge in functional inequalities connected to Efron-Stein estimates (see e.g. \cite[Chapters 3--5]{BLM}), and are pivotal objects in the theory developed in \cite{new_stein, dung, FK_21, peccati_lachieze-rey2017} and its extensions and applications (mainly via the use of the quantities $T_A$ defined in \eqref{e:ta} below --- see, for instance, \cite{chatterjee_sen, duerinckx, gloria_nolen}). Now fix $n\in\mathbbm{N}$, let $(\mathcal{X}, \mathbb{X})$ be a measurable space and let $X_1,X_2,\dots$ be a sequence of independent $\mathcal{X}$-valued random elements.  Let $X=(X_1,\dots,X_n)$ and write $X'=(X_1',\dots,X_n')$ and $\tilde{X}=(\tilde{X}_1,\dots,\tilde{X}_n)$ to indicate copies of $X$, such that $X,X',\tilde{X}$ are mutually independent.  For each $A\subset [n]$, we define
$$X_i^{A}=\begin{cases}
X_i', &\text{if } i\in A\\
X_i, &\text{if } i\not\in A.
\end{cases}$$
For $j\in [n]$, we write $X^j$ for $X^{\lbrace j\rbrace}$ and, for any $d\in\mathbbm{N}$ and $f:\mathcal{X}^n\to\mathbbm{R}^d$, we set
$$\Delta_jf(X)=f(X)-f(X^j).$$
Given $A\subset [n]$, we introduce the notation
\begin{equation}\label{e:ta}
T_A:=\sum_{j\not\in A}\left[\Delta_jf(X)\right]\left[\Delta_jf(X^A)\right]^{\bf T}\in \mathbbm{R}^{d\times d},
\end{equation}
and
$$T:=\frac{1}{2}\sum_{A\subsetneq [n]}k_{n,A}\, T_A,$$
where 
\begin{equation}\label{e:not}
k_{n,A} := \frac{1}{{n\choose |A|}(n-|A|)}, \quad A\subsetneq [n].
\end{equation}
Finally, for any $d\in\mathbbm{N}$ and function $g:\mathcal{X}^{2n}\to\mathbbm{R}^d$, we set the following notation
$$\tilde{\Delta}_ig(X,X'):=g(X,X')-g\left(\left(X_1,\dots,X_{i-1},\tilde{X}_i,X_{i+1},\dots,X_n\right),X'\right).$$

The following statement from \cite{new_stein} is exploited in several parts of the paper (see e.g. Remark \ref{remark_lemma1} below):
\begin{lemma}[Lemma 2.3 in \cite{new_stein}] \label{lemma1}
For any $g,h:\mathcal{X}^n\to\mathbb{R}$ such that $\mathbb{E}g(X)^2$ and $\mathbb{E}f(X)^2$ are both finite, we have
$${\rm Cov}(g(X),h(X))=\frac{1}{2}\sum_{A\subsetneq [n]}k_{n,A}\sum_{j\not\in A}\mathbb{E}\left[\Delta_jg(X)\Delta_jh(X^A)\right],$$
where $k_{n,A}$ is defined in \eqref{e:not} and ${\rm Cov}$ indicates the covariance.
\end{lemma}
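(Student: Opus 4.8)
\emph{Proof strategy.} This is a form of Chatterjee's covariance identity, and the plan is to combine a telescoping (interpolation) argument along a uniformly random permutation with an elementary symmetrisation. First, since $X'$ is an independent copy of $X$, one has $\mathbb{E}[g(X)]\,\mathbb{E}[h(X)] = \mathbb{E}[g(X')\,h(X)]$, and therefore
\[
{\rm Cov}(g(X),h(X)) = \mathbb{E}\big[\big(g(X)-g(X')\big)\,h(X)\big].
\]
Observing that $X = X^{\emptyset}$ and $X' = X^{[n]}$, the bracketed difference interpolates between the two extreme resamplings of $X$.

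Next I would introduce a uniformly random permutation $\pi$ of $[n]$, independent of $(X,X',\tilde X)$, and set $S_0^\pi := \emptyset$ and $S_k^\pi := \{\pi(1),\dots,\pi(k)\}$ for $1\le k\le n$. Telescoping along this increasing chain of subsets gives
\[
g(X)-g(X') = \sum_{k=1}^{n}\Big(g\big(X^{S_{k-1}^\pi}\big)-g\big(X^{S_k^\pi}\big)\Big),
\]
where each summand has the form $g(X^A)-g(X^{A\cup\{j\}})$ with $A = S_{k-1}^\pi$ and $j = \pi(k)\notin A$. Taking expectations --- first over $\pi$, then over $(X,X')$ --- and grouping the $k$-th term according to the value of the pair $(S_{k-1}^\pi,\pi(k)) = (A,j)$, the probability that $S_{k-1}^\pi = A$ and $\pi(k) = j$ equals $|A|!\,(n-|A|-1)!/n! = k_{n,A}$; letting $k$ run over $1,\dots,n$ is the same as letting $A$ run over all proper subsets of $[n]$, so that
\[
{\rm Cov}(g(X),h(X)) = \sum_{A\subsetneq[n]}k_{n,A}\sum_{j\notin A}\mathbb{E}\big[\big(g(X^A)-g(X^{A\cup\{j\}})\big)\,h(X)\big].
\]

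Finally I would symmetrise each summand. For fixed $j\notin A$, the measurable bijection of the probability space that exchanges $X_j$ with $X_j'$ and fixes all other coordinates preserves $\mathbb{P}$ (because $X_j,X_j'$ are i.i.d.\ and independent of everything else), and it sends $X^A \leftrightarrow X^{A\cup\{j\}}$ and $X\mapsto X^j$. Applying it inside the expectation and averaging with the untransformed expression produces the factor $1/2$ and turns $h(X)$ into $h(X)-h(X^j) = \Delta_j h(X)$:
\[
\mathbb{E}\big[\big(g(X^A)-g(X^{A\cup\{j\}})\big)\,h(X)\big] = \frac{1}{2}\,\mathbb{E}\big[\big(g(X^A)-g(X^{A\cup\{j\}})\big)\,\Delta_j h(X)\big].
\]
A second law-preserving swap --- exchanging $X_i$ with $X_i'$ for every $i\in A$, which sends $X\leftrightarrow X^A$ and $X^j\leftrightarrow X^{A\cup\{j\}}$ --- rewrites the right-hand side as $\frac{1}{2}\mathbb{E}[\Delta_j g(X)\,\Delta_j h(X^A)]$, and substituting this back yields the stated identity.

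The genuinely delicate parts are purely the bookkeeping: tracking the operators $X\mapsto X^A$ and the difference operators $\Delta_j$ under the two relabelings, and verifying the combinatorial weight of the permutation step. The hypotheses $\mathbb{E}g(X)^2,\mathbb{E}h(X)^2<\infty$ are precisely what guarantees that each increment $g(X^A)-g(X^{A\cup\{j\}})$ and $h(X^A)-h(X^{A\cup\{j\}})$ lies in $L^2$ (being a difference of two random variables distributed, respectively, as $g(X)$ and $h(X)$), so that every expectation above is finite by Cauchy--Schwarz and, there being only finitely many terms, all exchanges of summation and expectation are legitimate. An alternative route, closer to Chatterjee's original argument, is an induction on $n$ based on the same single-coordinate swap; the permutation telescoping seems the most transparent.
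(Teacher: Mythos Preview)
The paper does not supply its own proof of this lemma; it is quoted verbatim from Chatterjee~\cite{new_stein} and used as a black box. Your argument is correct: the telescoping along a uniformly random permutation produces exactly the weights $k_{n,A}=|A|!\,(n-|A|-1)!/n!$, and the two measure-preserving swaps (first $X_j\leftrightarrow X_j'$, then $X_i\leftrightarrow X_i'$ for $i\in A$) act on the vectors $X,X^j,X^A,X^{A\cup\{j\}}$ precisely as you describe, yielding the factor $\tfrac12$ and the final form $\mathbb{E}[\Delta_jg(X)\,\Delta_jh(X^A)]$. The $L^2$ justification via Cauchy--Schwarz is also correct, since every $g(X^B)$ and $h(X^B)$ is equal in law to $g(X)$ and $h(X)$ respectively.
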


\section{Main results}\label{s:main}
Fix $d\geq 1$ and let $f:\mathcal{X}^n\to\mathbbm{R}^d$ be a measurable function such that the random vector $$W:=f(X)$$ verifies the relations $\mathbbm{E}W=0$ and $\mathbbm{E}\|W\|^2<\infty$. Let $\Sigma\in\mathbbm{R}^{d\times d}$ be non-negative definite and let $N_{\Sigma}$ be $d$-dimensional vector with mean zero and covariance $\Sigma$.
We will now present the main results of the paper: detailed comparisons with the existing literature are gathered together in Section \ref{ss:comp}, whereas proofs can be found in Section \ref{proofs_main}. 
\subsection{General estimates}
We start with a collection of inequalities involving smooth test functions: they are both a multidimensional version of \cite[Theorem 2.2]{new_stein} and an alternative to the bounds appearing in \cite[Theorems 3.1 and 3.2]{dung}. 
\begin{theorem}\label{main}
 For any $F\in C^3(\mathbbm{R}^d)$,
\begin{align}
\left|\mathbbm{E}F(W)-\mathbbm{E}F(N_{\Sigma})\right|\leq \frac{\tilde{M}_2(F)}{2}\mathbbm{E}\left\|\mathbbm{E}[T|X]-\Sigma\right\|_{H.S}+\frac{M_3(F)}{12}\sum_{j=1}^n\mathbbm{E}\left\|\Delta_jf(X)\right\|^3.\label{non_neg_def_bound}
\end{align}
Moreover, if $\Sigma$ is positive definite, then, for any $F\in C^2\left(\mathbbm{R}^d\right)$,
\begin{align}
\left|\mathbbm{E}F(W)-\mathbbm{E}F(N_{\Sigma})\right|\leq \frac{M_1(F)\sqrt{2}}{\sqrt{\pi}}\|\Sigma^{-1}\|_{op}\mathbbm{E}\left\|\mathbbm{E}[T|X]-\Sigma\right\|_{H.S}+\frac{M_2(F)\sqrt{2\pi}}{16}\|\Sigma^{-1}\|_{op}\sum_{j=1}^n\mathbbm{E}\left\|\Delta_jf(X)\right\|^3.\label{pos_def_bound}
\end{align}

\end{theorem}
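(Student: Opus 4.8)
The plan is to deploy the multidimensional Stein's method. Recall that for a positive definite $\Sigma$, the Stein equation
\begin{equation*}
\langle \Sigma, \mathrm{Hess}(g)(w)\rangle_{H.S.} - \langle w, \nabla g(w)\rangle = F(w) - \mathbbm{E}F(N_\Sigma)
\end{equation*}
admits, for each $F$ in a suitable class, a solution $g = g_F$ whose derivatives are controlled in terms of those of $F$; the standard bounds (see e.g. \cite[Chapter 12]{normal_approx} or \cite{meckes09}) give $M_2(g_F)\le \tfrac{\sqrt{2\pi}}{4}\|\Sigma^{-1}\|_{op} M_1(F)$ and a corresponding control $M_3(g_F)\le C\|\Sigma^{-1}\|_{op} M_2(F)$ with the explicit constant that will produce the $\sqrt{2\pi}/16$ appearing in \eqref{pos_def_bound}; for the smooth case \eqref{non_neg_def_bound} one uses instead the smoothing/Gaussian-interpolation solution, valid for merely non-negative definite $\Sigma$, with $\tilde M_2(g_F)\le \tfrac12 \tilde M_2(F)$ and $M_3(g_F)\le \tfrac16 M_3(F)$. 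So the problem reduces to estimating $|\mathbbm{E}[\langle \Sigma, \mathrm{Hess}(g)(W)\rangle_{H.S.} - \langle W,\nabla g(W)\rangle]|$ for $g$ in the relevant smoothness class.

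Next I would rewrite $\mathbbm{E}\langle W, \nabla g(W)\rangle = \mathbbm{E}\langle f(X), \nabla g(f(X))\rangle$ using the covariance representation of Lemma \ref{lemma1}. Componentwise, $\mathbbm{E}[f_k(X)\,\partial_k g(f(X))]$ is a covariance (since $\mathbbm{E}W=0$), hence equals $\tfrac12\sum_{A\subsetneq[n]}k_{n,A}\sum_{j\notin A}\mathbbm{E}[\Delta_j f_k(X)\,\Delta_j(\partial_k g\circ f)(X^A)]$. Summing over $k$ and applying the fundamental theorem of calculus along the segment joining $f(X^A)$ and $f(X)$ — writing $\Delta_j(\partial_k g\circ f)(X^A) = \int_0^1 \langle \mathrm{Hess}(g)(\cdots), \Delta_j f(X^A)\rangle_k\,dt$ — turns the leading term into $\mathbbm{E}\langle \mathrm{Hess}(g)(f(X)), T\rangle_{H.S.}$ plus a Taylor remainder. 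Here $T$ is exactly the matrix defined in \eqref{e:ta}, and this is the key structural identity: the covariance formula of \cite{new_stein} is what manufactures $T$. The first Stein term then contributes $\mathbbm{E}\langle \mathrm{Hess}(g)(W), \Sigma - \mathbbm{E}[T\mid X]\rangle_{H.S.}$, bounded in Hilbert--Schmidt norm by $\tilde M_2(g)\,\mathbbm{E}\|\mathbbm{E}[T\mid X]-\Sigma\|_{H.S.}$ (using self-adjointness of the Hessian and that $\mathrm{Hess}(g)(W)$ is $X$-measurable, so one may condition and pull out $\mathbbm{E}[T\mid X]$), which after substituting the Stein bound on $\tilde M_2(g)$ or $M_2(g)$ yields the first summand of each displayed inequality.

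The remaining piece is the Taylor remainder from the second-order expansion: a term of the form $\tfrac12\sum_{A}k_{n,A}\sum_{j\notin A}\mathbbm{E}\big[ D^3 g(\xi)[\Delta_j f(X),\Delta_j f(X^A),\Delta_j f(X^A)]\big]$ (or the analogous third-order term from the non-smooth-$\Sigma$ solution). Bounding this by $\tfrac12 M_3(g)\sum_A k_{n,A}\sum_{j\notin A}\mathbbm{E}[\|\Delta_j f(X)\|\,\|\Delta_j f(X^A)\|^2]$, I would use the distributional symmetry — $X$ and $X^A$ have the same law, and within the sum one can exchange roles of the primed and unprimed coordinates on $A$ — together with Hölder/Young inequalities ($ab^2 \le \tfrac13 a^3 + \tfrac23 b^3$ up to the right bookkeeping) to replace $\mathbbm{E}[\|\Delta_j f(X)\|\,\|\Delta_j f(X^A)\|^2]$ by $\mathbbm{E}\|\Delta_j f(X)\|^3$; then the combinatorial identity $\tfrac12\sum_{A\subsetneq[n]}k_{n,A}\sum_{j\notin A}(\cdots) = \sum_{j=1}^n(\cdots)$ — which follows because $\sum_{A\subseteq[n]\setminus\{j\}} k_{n,A\cup?}$ telescopes, precisely the same counting that underlies Lemma \ref{lemma1} — collapses the double sum to $\sum_{j=1}^n \mathbbm{E}\|\Delta_j f(X)\|^3$, producing the constant $\tfrac{M_3(g)}{12}$ after the factor $\tfrac12\cdot\tfrac16$. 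Substituting the Stein bounds on $M_3(g)$ finishes both \eqref{non_neg_def_bound} and \eqref{pos_def_bound}.

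\textbf{Main obstacle.} The delicate point is the bookkeeping in the remainder term: one must carefully track which configuration ($X$ vs.\ $X^A$, and the segment endpoints) each $\Delta_j f$ is evaluated at, exploit the exchangeability to symmetrize $\|\Delta_j f(X)\|\cdot\|\Delta_j f(X^A)\|^2$ into a single cube, and verify that the weights $k_{n,A}$ combine to give exactly the clean sum $\sum_{j=1}^n\mathbbm{E}\|\Delta_j f(X)\|^3$ with the stated numerical constants. Getting the sharp constants $\tfrac{1}{12}$ and $\tfrac{\sqrt{2\pi}}{16}$ requires using the best available Stein-factor bounds for $\mathrm{Hess}(g)$ and $D^3 g$ and not losing anything in the Young-inequality step.
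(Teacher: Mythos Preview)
Your approach is essentially the paper's: solve the Stein equation via Meckes' lemma, rewrite $\mathbbm{E}\langle W,\nabla g(W)\rangle$ through the covariance identity of Lemma~\ref{lemma1}, Taylor-expand $\Delta_j(\partial_k g\circ f)$ to extract $\langle\mathrm{Hess}(g)(W),T\rangle_{H.S.}$, and bound the cubic remainder by H\"older using $\Delta_j f(X^A)\stackrel{d}{=}\Delta_j f(X)$. Two small numerical slips to fix: the Meckes bound for the non-negative-definite case is $M_3(g_F)\le\tfrac13 M_3(F)$, not $\tfrac16$; and the combinatorial identity reads $\sum_{A\subsetneq[n]}k_{n,A}\sum_{j\notin A}c_j=\sum_{j=1}^n c_j$ without an outer $\tfrac12$. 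The constant $\tfrac{1}{12}$ then factors as $\tfrac12\cdot\tfrac12\cdot\tfrac13$ (the $\tfrac12$ in front of the covariance sum, the $\tfrac12$ from the second-order Taylor remainder, and the Stein bound $\tfrac13$). The paper also inserts a mollification $F_\epsilon=F\star\varphi_{\epsilon^2 I}$ before invoking the Stein solution and lets $\epsilon\to0$ at the end---you allude to this but should make it explicit, since Meckes' solution requires enough smoothness.
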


We now move on to a bound on the convex distance between the distribution of $W$ and that of a Gaussian centered Gaussian random vector $N_\Sigma$ such that $\Sigma$ is invertible. Our estimate is valid under the additional assumption that
\begin{equation}\label{e:a6}
\mathbbm{E}\|\Delta_jf(X)\|^6<\infty, \quad j\in[n],
\end{equation}
and is expressed in terms of the parameters $\gamma_1,\gamma_2, \gamma_3$ and $\gamma_4$ defined as follows:
\begin{align*}
\gamma_1:=&\sum_{j=1}^n\mathbbm{E}\|\Delta_jf(X)\|^3\\
\gamma_2:=&\left(\sum_{j=1}^n\mathbbm{E}\|\Delta_jf(X)\|^4\right)^{1/2}\\
\gamma_3:=&\\
&\!\!\!\!\!\!\left\lbrace\frac{3}{2} \sum_{i=1}^n\mathbbm{E}\left[\left(\sum_{A\subsetneq [n]}k_{n,A}\sum_{j\not\in A}\mathbbm{1}_{[\tilde{\Delta}_i\Delta_jf(X)\neq 0]}\sqrt{\|\Delta_jf(X)\|+\|\tilde{\Delta}_i\Delta_jf(X)\|}\left\|\Delta_{j}f(X^A)\right\|\left\|\Delta_{j}f(X)\right\|\right)^2\right]\right.\\
&+9\sum_{i=1}^n\mathbbm{E}\left[\left(\sum_{A\subsetneq [n]}k_{n,A}\sum_{j\not\in A}\sqrt{\|\Delta_jf(X)\|+\|\tilde{\Delta}_i\Delta_jf(X)\|}\left\|\tilde{\Delta}_i\Delta_{j}f(X^A)\right\|\left\|\Delta_{j}f(X)\right\|\right)^2\right]\\
&+9\sum_{i=1}^n\mathbbm{E}\left[\left(\sum_{A\subsetneq [n]}k_{n,A}\sum_{j\not\in A}\sqrt{\|\Delta_jf(X)\|+\|\tilde{\Delta}_i\Delta_jf(X)\|}\left\|\Delta_{j}f(X^A)\right\|\left\|\tilde{\Delta}_i\Delta_{j}f(X)\right\|\right)^2\right]\\
&+\left.9\sum_{i=1}^n\mathbbm{E}\left[\left(\sum_{A\subsetneq [n]}k_{n,A}\sum_{j\not\in A}\sqrt{\|\Delta_jf(X)\|+\|\tilde{\Delta}_i\Delta_jf(X)\|}\left\|\tilde{\Delta}_i\Delta_{j}f(X^A)\right\|\left\|\tilde{\Delta}_i\Delta_{j}f(X)\right\|\right)^2\right]\right\rbrace^{1/3},\\
\gamma_4:=&\\
&\!\!\!\!\!\!\left\lbrace\frac{3}{2}\sum_{i=1}^n\mathbbm{E}\left[\left(\sum_{A\subsetneq [n]}k_{n,A}\sum_{j\not\in A}\mathbbm{1}_{[\tilde{\Delta}_i\Delta_jf(X)\neq 0]}\sqrt{\|\Delta_jf(X)\|^2+\|\tilde{\Delta}_i\Delta_jf(X)\|^2}\left\|\Delta_{j}f(X^A)\right\|\left\|\Delta_{j}f(X)\right\|\vphantom{\sum_k^l}\right)^2\right]\right.\\
&+\frac{27}{4}\sum_{i=1}^n\mathbbm{E}\left[\left(\sum_{A\subsetneq [n]}k_{n,A}\sum_{j\not\in A}\sqrt{\|\Delta_jf(X)\|^2+\|\tilde{\Delta}_i\Delta_jf(X)\|^2}\cdot\left\|\tilde{\Delta}_i\Delta_{j}f(X^A)\right\|\left\|\Delta_{j}f(X)\right\|\right)^2\right]\\
&+\frac{27}{4}\sum_{i=1}^n\mathbbm{E}\left[\left(\sum_{A\subsetneq [n]}k_{n,A}\sum_{j\not\in A}\sqrt{\|\Delta_jf(X)\|^2+\|\tilde{\Delta}_i\Delta_jf(X)\|^2}\cdot\left\|\Delta_{j}f(X^A)\right\|\left\|\tilde{\Delta}_i\Delta_{j}f(X)\right\|\right)^2\right]\\
&+\left.\frac{27}{4}\sum_{i=1}^n\mathbbm{E}\left[\left(\sum_{A\subsetneq [n]}k_{n,A}\sum_{j\not\in A}\sqrt{\|\Delta_jf(X)\|^2+\|\tilde{\Delta}_i\Delta_jf(X)\|^2}\cdot\left\|\tilde{\Delta}_i\Delta_{j}f(X^A)\right\|\left\|\tilde{\Delta}_i\Delta_{j}f(X)\right\|\right)^2\right]\right\rbrace^{1/4},
\end{align*}
where we have used the notation \eqref{e:not}. 

\medskip

The next statement is the main contribution of our work.

\begin{theorem}\label{main3}
Let $\Sigma\in\mathbbm{R}^{d\times d}$ be positive-definite, suppose that \eqref{e:a6} is verified and let
$$\gamma:=\max\left\lbrace \sqrt{\mathbbm{E}\left\|\mathbbm{E}\left[T-\Sigma|X\right]\right\|_{H.S.}^2},\gamma_1,\gamma_2,\gamma_3,\gamma_4\right\rbrace.$$
 Then,
\begin{align}\label{convex_result}
&d_{convex}(W,N_{\Sigma})\leq 541d^4\max\{1,\|\Sigma^{-1}\|_{op}^2\}\, \gamma.
\end{align}
\end{theorem}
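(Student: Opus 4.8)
The plan is to follow the recursive smoothing strategy of \cite{schulte_yukich}, adapted to the binomial setting, in order to pass from the smooth bound \eqref{pos_def_bound} in Theorem \ref{main} to a bound on the convex distance. The starting point is the classical observation that for any convex set $K\subset\mathbbm{R}^d$ and any $\varepsilon>0$ one can sandwich $\mathbbm{1}_K$ between two smooth functions $h_\varepsilon^-\leq \mathbbm{1}_K\leq h_\varepsilon^+$ obtained by convolving $\mathbbm{1}_{K^{-\varepsilon}}$ and $\mathbbm{1}_{K^{+\varepsilon}}$ (the inner/outer parallel sets) with a Gaussian kernel of scale $\varepsilon$, so that $M_1(h_\varepsilon^\pm)\lesssim 1/\varepsilon$, $M_2(h_\varepsilon^\pm)\lesssim 1/\varepsilon^2$, and $h_\varepsilon^+-h_\varepsilon^-$ is supported in an $O(\varepsilon)$-neighbourhood of $\partial K$. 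Plugging these into \eqref{pos_def_bound} gives, for a fixed convex $K$,
\begin{align*}
\left|\mathbbm{P}(W\in K)-\mathbbm{P}(N_\Sigma\in K)\right|\lesssim \|\Sigma^{-1}\|_{op}\left(\frac{1}{\varepsilon}\sqrt{\mathbbm{E}\|\mathbbm{E}[T-\Sigma|X]\|_{H.S.}^2}+\frac{1}{\varepsilon^2}\gamma_1\right)+\mathbbm{P}\big(N_\Sigma\in (\partial K)^{c\varepsilon}\big),
\end{align*}
where the last term is the Gaussian small-ball estimate for the $\varepsilon$-neighbourhood of the boundary, controlled by $c\,\varepsilon\,\|\Sigma^{-1}\|_{op}^{1/2}$ uniformly in $K$ by a standard fact about Gaussian measures of convex shells. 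Optimising over $\varepsilon$ would already yield a bound of the correct shape but with a suboptimal power of $\gamma$, essentially because $\mathbbm{P}(W\in (\partial K)^{c\varepsilon})$ has been replaced by $\mathbbm{P}(N_\Sigma\in(\partial K)^{c\varepsilon})$ at the cost of exactly the quantity we are trying to estimate.

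The point of the recursive method is to avoid this circularity. Writing $\rho:=d_{convex}(W,N_\Sigma)$, one instead keeps the term $\mathbbm{P}(W\in(\partial K)^{c\varepsilon})$ and bounds it by $\mathbbm{P}(N_\Sigma\in(\partial K)^{c\varepsilon})+\rho\lesssim \varepsilon\|\Sigma^{-1}\|_{op}^{1/2}d+\rho$, obtaining a self-referential inequality of the form $\rho\leq C_1(\varepsilon)+C_2\,\varepsilon+ (\text{const})\cdot\rho$. The "const" here is not genuinely close to $1$ in the binomial case, so the naive self-bounding does not close; the actual device, as in \cite{schulte_yukich, lrpy, NPY_jtp}, is to run the argument at the level of the \emph{difference operators}: one expands $\mathbbm{E}[h_\varepsilon^\pm(W)]-\mathbbm{E}[h_\varepsilon^\pm(N_\Sigma)]$ via Stein's equation and Lemma \ref{lemma1}, and in the error terms that involve second derivatives of the Stein solution one encounters expressions of the type $\mathbbm{1}[\tilde\Delta_i\Delta_j f(X)\neq 0]$ together with products of increments $\|\Delta_j f(X^A)\|\|\Delta_j f(X)\|$ — precisely the combinations appearing inside $\gamma_3$ and $\gamma_4$. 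The indicator $\mathbbm{1}[\tilde\Delta_i\Delta_j f(X)\neq 0]$ is what carries the event $\{W\in(\partial K)^{c\varepsilon}\}$ forward, and Cauchy–Schwarz against this indicator, followed by bounding the resulting probability by $\rho$ plus a Gaussian shell estimate, produces the self-improving recursion. One then iterates: a bound $\rho\leq \Phi(\rho)$ with $\Phi$ concave and $\Phi(0)$ small forces $\rho\lesssim \Phi(0)$-scale, and tracking the dimensional constants through the Stein factors (which in dimension $d$ cost powers of $d$, coming from $\|\Sigma^{-1}\|_{op}$, from Hilbert–Schmidt-vs-operator-norm comparisons that cost $\sqrt d$, and from the three derivatives of the regularised indicator) yields the stated $d^4\max\{1,\|\Sigma^{-1}\|_{op}^2\}$ prefactor and the single power of $\gamma=\max\{\sqrt{\mathbbm{E}\|\mathbbm{E}[T-\Sigma|X]\|^2_{H.S.}},\gamma_1,\gamma_2,\gamma_3,\gamma_4\}$.

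Concretely, the steps I would carry out are: (1) record the Gaussian regularisation lemma for indicators of convex sets with the sharp $\varepsilon^{-1}$, $\varepsilon^{-2}$ bounds on $M_1,M_2$, together with the uniform Gaussian boundary-shell estimate $\sup_K\mathbbm{P}(N_\Sigma\in(\partial K)^{\varepsilon})\lesssim \varepsilon\sqrt{d}\,\|\Sigma^{-1}\|_{op}^{1/2}$; (2) set up the Stein equation for $h_\varepsilon^\pm$ and expand $\mathbbm{E}[h_\varepsilon^\pm(W)]-\mathbbm{E}[h_\varepsilon^\pm(N_\Sigma)]$ using the representation of $\mathrm{Cov}$ from Lemma \ref{lemma1} and a second-order Taylor expansion, exactly as in the proof of Theorem \ref{main}; (3) isolate, among the error terms, those that must be paired with the event $\{W\in(\partial K)^{c\varepsilon}\}$ and apply Cauchy–Schwarz so that the increment-products assemble into $\gamma_3,\gamma_4$ while the probability factor assembles into $\rho+O(\varepsilon\sqrt d\|\Sigma^{-1}\|_{op}^{1/2})$; (4) collect everything into a recursive inequality $\rho\leq a/\varepsilon\cdot\gamma + a/\varepsilon^2\cdot\gamma + b\,\varepsilon + c\,\sqrt{\gamma}\,\sqrt{\rho+b\varepsilon}$ (schematically), choose $\varepsilon$ of order $\sqrt\gamma$ or $\gamma^{1/3}$ as dictated by balancing, and solve the resulting quadratic-type inequality in $\rho$; (5) carefully propagate the dimensional constants to reach $541\,d^4$. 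The main obstacle is step (3)–(4): getting the recursion to close with only a \emph{first} power of $\gamma$ on the right — rather than $\gamma^{1/2}$ or $\gamma^{2/3}$, which a crude application of Cauchy–Schwarz would give — requires exploiting the indicator $\mathbbm{1}[\tilde\Delta_i\Delta_j f(X)\neq 0]$ to absorb one half-power back into a probability that is itself $O(\rho+\varepsilon)$, and then choosing $\varepsilon$ so that this feedback term is reabsorbed into the left-hand side; simultaneously keeping the constant in front of $\rho$ strictly below $1$ after this reabsorption, uniformly in $d$, is the delicate quantitative heart of the argument, and is exactly where the specific powers of $d$ in the final bound are generated.
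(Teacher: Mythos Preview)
Your overall strategic framework (Gaussian smoothing, recursive self-bounding \`a la \cite{schulte_yukich}, optimising the smoothing scale) is correct, but the internal mechanics you describe diverge from what actually makes the argument close, and in two places your picture is wrong in a way that would prevent you from completing the proof.

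First, you propose to start from the $C^2$ bound \eqref{pos_def_bound} applied to mollified indicators. The paper does \emph{not} do this: it works directly with the explicit Stein solution $f_{t,h,\Sigma}$ of \eqref{f_def} and pushes the Taylor expansion to \emph{third} order, exploiting the exchangeability $(X,X^j,X^A,X^{A\cup\{j\}})\stackrel{\mathcal D}{=}(X^j,X,X^{A\cup\{j\}},X^A)$ to kill the second-order remainder exactly (see the display after \eqref{def_j}). What survives is the quantity $J(j,A)$ involving a \emph{difference} of third derivatives, which is then rewritten via the integral representation of $\partial^3\varphi_\Sigma$; the singularity in the smoothing parameter is only $t^{-1/2}$, not the $\varepsilon^{-2}$ you would get from $M_2$ of a mollified indicator. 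Without this cancellation the powers do not balance.

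Second, and more seriously, you misidentify the origin of $\gamma_3,\gamma_4$. You write that ``the indicator $\mathbbm{1}[\tilde\Delta_i\Delta_j f(X)\neq 0]$ is what carries the event $\{W\in(\partial K)^{c\varepsilon}\}$ forward''. These are unrelated objects. The boundary event $\{\mathrm{dist}(f(X),\partial K_{s,z})\leq w\}$ appears in Step~2 of the paper's proof purely from the convexity of $K$ (bounding $|h(\cdot)-h(\cdot)|$ by a tube indicator), and is then handled in Step~3 by splitting $U^{(1)}_{k,l,m}$ into an expectation part $R^{(1)}$ (where \cite[Lemma~3.3]{schulte_yukich} produces the recursive term $d_{\mathrm{convex}}\cdot\gamma_1$) and a fluctuation part $R^{(2)}$. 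The quantities $\gamma_3,\gamma_4$ arise only at Step~4, when one bounds the \emph{conditional variance} in $R^{(2)}$ by Efron--Stein (see \eqref{general_fact}): the resampling operator $\tilde\Delta_i$ enters there, and $\mathbbm{1}[\tilde\Delta_i\Delta_j f\neq 0]$ simply records which $j$ are affected by resampling coordinate $i$. So $\gamma_3,\gamma_4$ are genuinely variance bounds, not boundary-probability bounds; Cauchy--Schwarz is applied between $\sqrt{\mathbbm P(\text{boundary})}$ and $\sqrt{\mathrm{Var}(\cdots)}$, not against the $\tilde\Delta_i$-indicator. Your step~(3) as written would not produce the correct objects.

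Finally, the closing recursion is slightly different in shape from your schematic: one obtains an inequality involving $\sqrt{(\log t)^2\rho + d^{5/6}}\cdot\gamma$, $t^{-1/2}(\gamma^2+\rho\gamma+\sqrt{\rho\gamma^3+\gamma^4})$ and $\sqrt t$, and closes it by choosing $\sqrt t=\max\{c\rho/d,\gamma\}$ so that the $\rho$-terms on the right become $\leq\frac12\rho$ and can be absorbed. The exponents $3$ and $4$ in $\gamma_3^3,\gamma_4^4$ are precisely what make this choice of $t$ give a linear bound in $\gamma$.
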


\medskip

\begin{remark}\label{remark_lemma1}{\rm 
If $\Sigma$ is the covariance matrix of $W$, then $\Sigma=\mathbbm{E}T$ (by virtue of Lemma \ref{lemma1}) and 
$$\mathbbm{E}\left\|\mathbbm{E}\left[T-\Sigma|X\right]\right\|_{H.S.}\leq\sqrt{\mathbbm{E}\left\|\mathbbm{E}\left[T-\Sigma|X\right]\right\|_{H.S.}^2}=\sqrt{\sum_{k,l=1}^d\text{Var}\left[\mathbbm{E}[T_{k,l}|X]\right]}$$
and the bounds in Theorems \ref{main} and \ref{main3} simplify accordingly.}
\end{remark}

\subsection{A simplified bound for symmetric statistics}

We will now present a useful statement (Lemma \ref{lemma_bn} below), allowing one to bound the quantity $\mathbbm{E}\left\|\mathbbm{E}\left[T-\mathbbm{E}T|X\right]\right\|_{H.S.}^2$ whenever $f$ is symmetric and the random elements $X_i$ are (independent and) identically distributed. We need some additional notation: denoting by $\{X'_i\}$ an independent copy of $\{X_i\}$, for any random vector $Z=(Z_1,\dots,Z_n)$ and every $A\subset [n]$, we set
\begin{align*}
Z_i^A=\begin{cases}
X_i',&\text{if } i\in A\\
Z_i,&\text{if } i\not\in A,
\end{cases}
\quad
\text{and}\quad
Z^A=\left(Z_1^A,\cdots Z_n^A\right);
\end{align*}
also, for $1\leq i\neq j\leq n$, we write
\begin{align*}
&\Delta_{i}f(Z):=f(Z)-f\left(Z^{\lbrace i\rbrace}\right)\quad\text{and}\quad\Delta_{i,j}f(Z):=f(Z)-f\left(Z^{\lbrace i\rbrace}\right)-f\left(Z^{\lbrace j\rbrace}\right)+f\left(Z^{\lbrace i,j\rbrace}\right).
\end{align*}
We say that $Z=(Z_1,\dots,Z_n)$ is a \textbf{recombination} of $\lbrace X,X',\tilde{X}\rbrace$ if $Z_i\in\lbrace X_i,X_i',\tilde{X}_i\rbrace$ for all $i\in[n]$.
Finally, we let
\begin{align*}
&B_n(f):=\sup_{(Y,Z,Z')}\mathbbm{E}\left[\mathbbm{1}_{\lbrace\Delta_{1,2}f(Y)\neq 0\rbrace}\left\|\Delta_1f(Z)\right\|^2\left\|\Delta_2f(Z')\right\|^2\right];\\
&B_n'(f):=\sup_{(Y,Y',Z,Z')}\mathbbm{E}\left[\mathbbm{1}_{\lbrace \Delta_{1,2}f(Y)\neq 0,\Delta_{1,3}f(Y')\neq 0\rbrace}\|\Delta_2f(Z)\|^2\|\Delta_3f(Z')\|^2\right],
\end{align*}
where the suprema run over all vectors $Y,Y',Z,Z'$ that are recombinations of $\lbrace X,X',\tilde{X}\rbrace$. The proof of the following result can be found in Section \ref{proof_lemma_bn}.

\begin{lemma}[cf. Theorem 5.1 of \cite{peccati_lachieze-rey2017}]\label{lemma_bn}
Suppose that $f$ is symmetric and that $X_1,\dots,X_n$ are i.i.d..Then,
\begin{align*}
\sqrt{\mathbbm{E}\left\|\mathbbm{E}[T|X]-\mathbbm{E}T\right\|_{H.S.}^2}\leq &4\sqrt{n}\left(\sqrt{nB_n(f)}+\sqrt{n^2B_n'(f)}+\sqrt{\mathbbm{E}\left\|\Delta_1f(X)\right\|^4}\right).
\end{align*}
\end{lemma}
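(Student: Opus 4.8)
The plan is to adapt the Efron-Stein-type computation from Theorem 5.1 of \cite{peccati_lachieze-rey2017} to the present vector-valued setting, controlling each of the $d^2$ entries of the conditional-expectation matrix and then summing up. Write $T_{k,l}$ for the $(k,l)$-entry of $T$, so that $\mathbbm{E}\|\mathbbm{E}[T|X]-\mathbbm{E}T\|_{H.S.}^2=\sum_{k,l=1}^d\operatorname{Var}(\mathbbm{E}[T_{k,l}|X])$. For each fixed pair $(k,l)$ we have, from the definition \eqref{e:ta} and \eqref{e:not},
\begin{equation*}
T_{k,l}=\frac{1}{2}\sum_{A\subsetneq[n]}k_{n,A}\sum_{j\not\in A}\bigl(\Delta_jf(X)\bigr)_k\bigl(\Delta_jf(X^A)\bigr)_l,
\end{equation*}
which is a scalar statistic of the enlarged family $(X,X')$; applying the Efron-Stein inequality in the $n$ coordinates of $X$ (with $\tilde X$ the independent resampling, exactly as in the scalar proof) bounds $\operatorname{Var}(\mathbbm{E}[T_{k,l}|X])$ by $\frac12\sum_{i=1}^n\mathbbm{E}(\tilde\Delta_i\mathbbm{E}[T_{k,l}|X,X'])^2$, and after moving the conditional expectation outside and using Jensen this is at most $\frac12\sum_i\mathbbm{E}(\tilde\Delta_iT_{k,l})^2$.

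The core of the argument is then to expand $\tilde\Delta_iT_{k,l}$ by the product (Leibniz) rule: $\tilde\Delta_i\bigl[(\Delta_jf(X))_k(\Delta_jf(X^A))_l\bigr]$ splits into terms involving $\tilde\Delta_i\Delta_jf(X)$ and $\tilde\Delta_i\Delta_jf(X^A)$ paired against the undifferenced factors, giving four groups of terms — this is precisely where the four-term structure visible in $\gamma_3$ and $\gamma_4$ originates. A crucial observation, used already in \cite{peccati_lachieze-rey2017}, is that $\tilde\Delta_i\Delta_jf(X)$ vanishes unless changing coordinate $i$ affects the first difference at $j$, i.e. unless a second-order difference $\Delta_{i,j}f$ of some recombination is nonzero; this introduces the indicator factors $\mathbbm{1}_{\{\Delta_{1,2}f(Y)\neq0\}}$. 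After squaring, one has a double sum over $(j,j')$ of products of (at most four) difference factors times these indicators; one estimates the $\ell^2$-norms of the difference vectors by absolute values, applies Cauchy-Schwarz to decouple the $A$- and $A'$-sums from the $j$-sums (using $\sum_{A\subsetneq[n]}k_{n,A}|\{j\notin A\}|\le\frac{n}{1}\cdot$const and the combinatorial identity $\sum_A k_{n,A}=H_{n-1}\le\log n$ is to be avoided — instead one uses the sharper bookkeeping $\sum_{A}k_{n,A}\mathbbm{1}_{j\notin A}=\sum_{m=0}^{n-1}\binom{n-1}{m}/(\binom{n}{m}(n-m))\cdot$ which telescopes to $1$), so that no logarithmic factor in $n$ survives. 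The diagonal $j=j'$ terms produce a sum of fourth moments $\mathbbm{E}\|\Delta_1f(X)\|^4$, while the off-diagonal $j\ne j'$ terms, carrying the product of two second-order-difference indicators $\mathbbm{1}_{\{\Delta_{1,2}f(Y)\neq0\}}\mathbbm{1}_{\{\Delta_{1,3}f(Y')\neq0\}}$, produce the $B_n'(f)$ term; the single-indicator off-diagonal or diagonal pieces produce the $B_n(f)$ term. Counting: there are $n$ choices of $i$, and for each the remaining free indices contribute the factor $n$ (one index) or $n^2$ (two indices), explaining the $n\cdot\bigl(nB_n+n^2B_n'+\mathbbm{E}\|\Delta_1f(X)\|^4\bigr)$ shape; taking square roots and using $\sqrt{a+b+c}\le\sqrt a+\sqrt b+\sqrt c$ gives the stated bound, with the constant $4$ absorbing the factor $\frac12$ from Efron-Stein, the factor from the four-term Leibniz expansion, and the $\sum_{k,l}$ over the $d^2$ entries (which is why symmetry and i.i.d.\ let one replace the $d^2$-fold sum by a single representative pair $(1,2)$ up to the constant — more precisely the bound for each $(k,l)$ is the same and summing $d^2$ copies is re-absorbed by relating $\sum_{k,l}\|\cdot\|^2$ back to $\mathbbm{E}\|\Delta_1f(X)\|^4$-type quantities that already contain the full Euclidean norm).

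The main obstacle I anticipate is the careful combinatorial accounting that eliminates the $\log n$ factor: one must resist the naive bound $\sum_{A\subsetneq[n]}k_{n,A}\le H_{n-1}$ and instead keep the weights $k_{n,A}$ attached to the summation over $j\notin A$ and over the resampled coordinate $i$, exploiting that for a \emph{fixed} coordinate pattern the sums $\sum_A k_{n,A}\mathbbm{1}_{\{i,j\notin A\}}$ and similar telescope to absolute constants; this is the step where the Schulte–Yukich-style recursive/averaging philosophy really pays off and where a sloppy estimate would reintroduce spurious logarithms. A secondary technical point is handling the square roots $\sqrt{\|\Delta_jf(X)\|+\|\tilde\Delta_i\Delta_jf(X)\|}$ — but for the purposes of \emph{this} lemma (which bounds only the $H.S.$ term, not $\gamma_3,\gamma_4$) these do not appear; here one only needs the clean decomposition into $B_n$, $B_n'$ and the fourth-moment term, so the Cauchy–Schwarz applications are straightforward once the combinatorial weights are correctly bounded. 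I would therefore organize the proof as: (i) reduce to $\sum_{k,l}\operatorname{Var}(\mathbbm{E}[T_{k,l}|X])$ and apply Efron–Stein; (ii) Leibniz-expand $\tilde\Delta_iT_{k,l}$ into four groups; (iii) insert the second-order-difference indicators coming from $\tilde\Delta_i\Delta_j\ne0$; (iv) square, split diagonal vs.\ off-diagonal in $(j,j')$, and apply Cauchy–Schwarz with the telescoping weight bounds; (v) recognize $B_n(f)$, $B_n'(f)$, and $\mathbbm{E}\|\Delta_1f(X)\|^4$; (vi) collect constants and take square roots.
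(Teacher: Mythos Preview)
Your overall strategy—Efron–Stein plus a Leibniz expansion of $\tilde\Delta_i$, followed by case analysis into diagonal and off-diagonal $(j,j')$ terms that become $B_n$, $B_n'$ and the fourth moment—is correct and matches the paper's. The one genuine organizational difference is the order of operations: the paper first applies Minkowski's inequality to pull $\frac12\sum_A k_{n,A}$ \emph{outside} the square root, obtaining $\sqrt{\mathbbm{E}\|\mathbbm{E}[T|X]-\mathbbm{E}T\|_{H.S.}^2}\le\frac12\sum_A k_{n,A}\sqrt{\mathbbm{E}\|T_A-\mathbbm{E}[T_A|X']\|_{H.S.}^2}$, and only \emph{then} applies Efron–Stein to each $T_A$ separately. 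Consequently the squaring produces a double sum over $j,k\notin A$ with a \emph{single} $A$, and the $A$-combinatorics is postponed to the very end via the clean identities $\sum_A k_{n,A}(n-|A|)=n$ and $\sum_A k_{n,A}\sqrt{n-|A|}\le 2\sqrt n$. Your route—Efron–Stein first on the full $T$—forces the double $(A_1,A_2)$-sum you flag as ``the main obstacle''; it can be handled (the identity $\sum_A k_{n,A}\mathbbm{1}_{\{j\notin A\}}=1$ you cite, applied twice, does the job), but the paper's ordering sidesteps this entirely, so there is no delicate bookkeeping to avoid a $\log n$. Two minor points: the paper works directly with $H.S.$-norms via the rank-one identity $\|uv^T\|_{H.S.}=\|u\|\,\|v\|$ rather than entry-wise, so your $d^2$ discussion is a red herring (and note that the i.i.d.\ and symmetry hypotheses concern the \emph{sample} indices, not the vector components—your parenthetical conflates the two); and the Schulte–Yukich recursion plays no role in this lemma, which is purely an Efron–Stein computation in the style of \cite[Theorem~5.1]{peccati_lachieze-rey2017}—the recursive machinery enters only in Theorem~\ref{main3}.
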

\begin{example}\label{ex:clt}{\rm 
Fix $d\geq 1$ and let $X_1,\dots,X_n$ be i.i.d. $\mathbbm{R}^d$-valued random variables with mean zero, covariance matrix $\Sigma$ and satisfying $\mathbbm{E}\|X_1\|^4<\infty$. Let $W=f(X)=\frac{1}{\sqrt{n}}\sum_{i=1}^n X_i$. Adopt the notation of Theorem \ref{main3} and note that
\begin{align}\label{example1}
&\gamma_1=\frac{1}{n^{3/2}}\sum_{j=1}^n\mathbbm{E}\|X_j-X_j'\|^3\leq 8\mathbbm{E}\|X_1\|^3\,n^{-1/2};\quad\gamma_2=\frac{1}{n}\left(\sum_{j=1}^n\mathbbm{E}\|X_j-X_j'\|^4\right)^{1/2}\leq 4\sqrt{\mathbbm{E}\|X_1\|^4}\,n^{-1/2}.
\end{align}
Moreover, note that $\tilde{\Delta}_i\Delta_jf(X)=\mathbbm{1}_{[i=j]}\left(X_j-\tilde{X}_j\right)$ and that, for $A\subsetneq[n]$ and $j\not\in A$, $\Delta_j(X^A)=\Delta_j(X)$. It follows that, if $\mathbbm{E}\|X_1\|^6<\infty$ then, for $p=1,2$,
\begin{align}
\gamma_{p+2}=&\left\lbrace\frac{3}{2}\sum_{j=1}^n\mathbbm{E}\left[\left(\|X_j-X_j'\|^p+\|X_j-\tilde{X}_j\|^p\right)\left\|X_j-X_j'\right\|^4\right]n^{-2-p/2}\right.\notag\\
&+2\left(\frac{9}{2}+\frac{9}{2p}\right)\sum_{j=1}^n\mathbbm{E}\left[\left(\|X_j-X_j'\|^p+\|X_j-\tilde{X}_j\|^p\right)\left\|X_j-\tilde{X}_j\right\|^2\left\|X_j-X_j'\right\|^2\right]n^{-2-p/2}\notag\\
&\left.+\left(\frac{9}{2}+\frac{9}{2p}\right)\sum_{j=1}^n\mathbbm{E}\left[\left(\|X_j-X_j'\|^p+\|X_j-\tilde{X}_j\|^p\right)\left\|X_j-\tilde{X}_j\right\|^4\right]n^{-2-p/2}\right\rbrace^{1/(p+2)}\notag\\
\leq &2^{1+1/(p+2)}\left(30+\frac{27}{p}\right)^{1/(p+2)}\left(\mathbbm{E}\|X_1\|^{p+4}\right)^{1/(p+2)}n^{-1/2}.\label{example2}
\end{align}
Furthermore, since $f$ is symmetric, we can apply Lemma \ref{lemma_bn}. Adopting the notation thereof and letting $Y$ be a recombination of $\{X,X',\tilde{X}\}$, we have that
\begin{align*}
\Delta_{1,2}f(Y)=\frac{1}{\sqrt{n}}\left[\left(\sum_{i=1}^n Y_i\right)-\left(X_1'+\sum_{i=2}^nY_i\right)-\left(X_2'+Y_1+\sum_{i=3}^n Y_i\right)+\left(X_1'+X_2'+\sum_{i=3}^n Y_i\right)\right]=0.
\end{align*}
Therefore, noting that $\Sigma=\mathbbm{E}T$ (see Remark \ref{remark_lemma1}), we have that
\begin{align}\label{example3}
\sqrt{\mathbbm{E}\left\|\mathbbm{E}[T|X]-\Sigma\right\|_{H.S.}^2}\leq 16\sqrt{\mathbbm{E}\|X_1\|^4}\,n^{-1/2}.
\end{align}
Finally, combining Theorems \ref{main} and \ref{main3} with (\ref{example1})---(\ref{example3}) and letting $N_{\Sigma}$ be a centred normal random variable with covariance $\Sigma$, we obtain:
\begin{enumerate}
\item For $F\in C^3(\mathbbm{R}^d)$, \begin{equation} \label{e:clt1} | \mathbbm{E}[F(W)]-\mathbbm{E}[F(N_{\Sigma})] |\leq \left(8\tilde{M}_2(F)\sqrt{\mathbbm{E}\|X_1\|^4}+\frac{3M_3(F)}{4}\mathbbm{E}\|X_1\|^3\right)n^{-1/2};\end{equation}
\item If $\Sigma$ is positive definite then, for $F\in C^2(\mathbbm{R}^d)$, 
 \begin{equation} \label{e:clt2}|\mathbbm{E}[F(W)]-\mathbbm{E}[[F(N_{\Sigma})] |\leq \|\Sigma^{-1}\|_{op}\left(\frac{16\sqrt{2}\,M_1(F)}{\sqrt{\pi}}\sqrt{\mathbbm{E}\|X_1\|^4}+\frac{M_2(F)\sqrt{2\pi}}{2}\mathbbm{E}\|X_1\|^3\right)n^{-1/2};\end{equation}
\item If $\Sigma$ is positive definite and $\mathbbm{E}\|X_1\|^6<\infty$ then
 \begin{equation} \label{e:clt3}d_{convex}(W,N_{\Sigma})\leq 8656\,d^4\,\max\{1,\|\Sigma^{-1}\|_{op}^2\}\max\left\lbrace \sqrt{\mathbbm{E}\|X_1\|^4}, \,\mathbbm{E}\|X_1\|^3,\,\left(\mathbbm{E}\|X_1\|^{5}\right)^{1/3},\,\left(\mathbbm{E}\|X_1\|^{6}\right)^{1/4}\right\rbrace n^{-1/2}.\end{equation}
\end{enumerate}
As a consequence, in the three cases our estimates yield bounds on normal approximations converging to zero with an optimal rate which is commensurate to the square-root of the sample size. }
\end{example}

\subsection{Literature review} \label{ss:comp} As already discussed in the Introduction, our main theoretical bounds \eqref{non_neg_def_bound}, \eqref{pos_def_bound} and \eqref{convex_result} generalise the one-dimensional estimates stated in \cite[Theorem 2.2]{chatterjee} (1-dimensional Wasserstein distance) and \cite[Theorem 4.2]{peccati_lachieze-rey2017} (1-dimensional Kolmogorov distance). One noticeable difference between \eqref{convex_result} and the content of \cite[Theorem 4.2]{peccati_lachieze-rey2017} is that the latter allows one to consider random variables such that $\mathbbm{E} | \Delta_jf(X)|^4<\infty$ (which is weaker than \eqref{e:a6} above). One could also similarly relax the moment assumptions in our bounds, at the cost of considerably longer proofs: since this point would impact only marginally the scope of our applications, we decided not to pursue it further.  One can check that, in dimension $d=1$, relation \eqref{convex_result} yields upper bounds on the Kolmogorov distance that are commensurate to \cite[equation (4.2)]{peccati_lachieze-rey2017} (which requires integrability assumptions analogous to \eqref{e:a6}). 

In the multidimensional case, the most relevant references related to our work are \cite{dung, FK_21}. The proof and statements of Theorem 3.1 and 3.2 in \cite{dung} deal with the case of smooth test functions, and are close to our estimates \eqref{non_neg_def_bound}---\eqref{pos_def_bound} (albeit expressed in terms of slightly different quantities). Although we did not carry out the computations in full detail, it is reasonable to expect that, in the case of smooth test functions, the upper bounds from \cite{dung} would yield rates of convergence similar to ours in all applications we will develop below. 

Theorem 1.2 in \cite{FK_21} provides an upper bound on the quantity 
$$
{\bf D} := \sup_{R} \big| \mathbbm{P}[W\in R] - \mathbbm{P}[ N_\Sigma\in R] \big|, 
$$
where $R$ runs over the class of all rectangles of $\mathbbm{R}^d$, in such a way that ${\bf D}\leq d_{\rm convex} (W, N_\Sigma)$. The bounds on ${\bf D}$ derived in \cite{FK_21} are expressed in terms of the same difference operators appearing on the right-hand side of \eqref{convex_result}, and are obtained by combining G\"otze's `generator approach' towards Stein's method \cite{gotze} with some smoothing estimates from \cite{batrao}. Apart from the fact that the class of convex sets is strictly larger than that of rectangles, the main differences between \cite[Theorem 1.2]{FK_21} and \eqref{convex_result} are the following: (i) because of the involved smoothing techniques, the bounds from \cite{FK_21} tend to produce a suboptimal dependence on the sample size $n$, typically displaying additional multiplicative factors exploding at a logarithmic rate (see e.g. \cite[Corollary 1.3]{FK_21}), whereas our bounds have been devised in order to produce presumably optimal rates (that is, rates proportional to the inverse of the square root of the variance of $W$, as $n\to \infty$); (ii) the upper bounds deduced in \cite{FK_21} typically display a logarithmic-type dependence on the dimension $d$, which is in stark contrast with the $d^4$ prefactor appearing in our inequalities; (iii) the estimates from \cite{FK_21} require weaker integrability assumptions on the random vectors $\Delta_jf(X)$ (typically, finite fourth moments vs. finite sixth moments in our case). The better performance of our bounds described at Point (i) is a direct consequence of the recursive approach initiated in \cite{schulte_yukich}, and further developed in the present work (see also \cite{NPY_jtp, lrpy}). The discrepancy at Point (ii) is somehow consistent with the fundamental geometric fact that the Gaussian isoperimetric constant  for convex sets increases as a power of $d$, whereas the one of rectangles is bounded from above by a multiple of $\sqrt{\log d}$ (see e.g. \cite{nazarov, raic}, as well as \cite[Appendix A]{NPY_jtp}), and is likely to be unavoidable; note that a similar dependence on the dimension (of the order of $d^5$) also appears in the main bounds developed in \cite{schulte_yukich}. As already observed in the case of smooth test functions, one could in principle push further the approach developed in the present work (at the cost of considerably longer proofs) in order to relax the integrability requirements on the random vectors $\Delta_jf(X)$, thus addressing Point (iii).

It is also natural to compare the specific content of Example \ref{ex:clt} with the best available Berry-Esseen bounds for the classical multidimensional CLT, see e.g. \cite{bentkus, FK_21c}. In this case, our estimate \eqref{e:clt3} yields an upper bound converging to zero at the optimal rate $n^{-1/2}$, displaying nonetheless a suboptimal dimensional dependence of the order $d^4$ (to be contrasted with Bentkus' famous bounds from \cite{bentkus}, featuring a dimensional dependence of the order $d^{1/4}$), as well as requiring stronger integrability assumptions (i.e., existence of 6th moments). In principle, these shortcomings are a consequence of the fact that our methods are not optimised for dealing with the case of linear statistics, which often require careful ad-hoc arguments --- see e.g. \cite{bentkus, FK_21, FK_21c}. We refer the reader to \cite[Section 1.1]{FK_21}, and the references therein, for a detailed discussion of the state-of-the-art literature on the matter.


\section{Applications to coverage processes}\label{ss:covering}
We will now use our main estimates in order to assess the joint fluctuations of geometric quantities associated with a binomial version of {\bf the (Euclidean) Boolean model} in the so-called {\bf thermodynamic regime} (that is, with grains of a fixed radius). Our main results are gathered together in the statements of Theorem \ref{pos_def_teorem} and Theorem \ref{theorem_covering} below, whose proofs are postponed to Section \ref{covering process proofs}. As explained in Remark \ref{r:covlit}, our results are the first quantitative CLTs for geometric functionals attached to the binomial Boolean model, and should be compared with recent advances in a Poissonian setting \cite{hug_last_schulte, schulte_yukich}. We refer the reader to the monographs \cite{hall_book, penrose, geometry}, as well as \cite[Chapters 16 and 17]{last_penrose}, for a general introduction to Boolean models and for an overview of the history of the subject. All needed geometric notions (in particular, related to convex geometry and intrinsic volumes) can be found in \cite[Chapter 14]{geometry}.

Fix $d\geq 1$. For every $n\geq 1$, we write $E_n$ to indicate a cube of volume $n$ in $\mathbbm{R}^d$ and let $C_1,\dots,C_n$ denote i.i.d. random variables uniformly distributed on $E_n$ (customarily called \textbf{germs}). Let $K=B(0,R)\subset\mathbbm{R}^d$ be the closed ball of radius $R>0$ centered at the origin and define $X_i :=C_i+K$, $i=1,\dots,n$ and $X:=( X_1,\dots,X_n)$.
We consider the random set --- called the {\bf Boolean model} associated with $K$ and $C_1,...,C_n$ ---  formed by the union of $K$ translated by the germs, that is: 
$$F_n=\bigcup_{k=1}^n X_k.$$
Observe that, by construction, $F_n$ is a random element with values in the {\bf convex ring} $\mathcal{R}^d$ of $\mathbbm{R}^d$, defined as the collection of all finite unions of convex compact subsets of $\mathbb{R}^d$.

As anticipated, our aim is to study the fluctuations of the scaled joint law of the intrinsic volumes of $F_n$, i.e. the law of $f(X_1,\dots,X_n)$, for $f=(f_0,\dots,f_d)$ defined by
$$f_i(X_1,\dots,X_n)=\frac{1}{\sqrt{n}}\left(V_i(F_n)-\mathbbm{E}V_i(F_n)\right)$$
with $V_i$ denoting the $i$th {\bf intrinsic volume} (observe that the definition of $f$ changes with $n$, and that such a dependence is omitted for notational clarity). We recall (see \cite[Chapter 14.2]{geometry} for more details) that each $V_i$ is an additive real-valued mapping on $\mathcal{R}^d$, and that $V_d(B),\,  V_{d-1}(B)$ and $V_0(B)$ coincide, respectively, with the volume, half the surface area and the Euler characteristic of $B\in \mathcal{R}^d$. We will also use the so-called {\bf Wills functional} $\overline{V}$, which is defined for all compact convex sets $A$ as
$$\overline{V}(A)=\sum_{l=0}^d\kappa_{d-l}V_l(A),$$
with $\kappa_{d-l}$ denoting the volume of the $(d-l)$-dimensional unit ball. From now on, we write
$$W:=f(X)$$
and let $\Sigma_n$ denote the covariance matrix of $W$. Finally, we let $\Sigma\in\mathbbm{R}^{(d+1)\times(d+1)}$ be given by
\begin{align}
\Sigma_{i,j}:=&\sum_{k=2}^{\infty}\frac{1}{k!}\sum_{s=i}^d\sum_{r=j}^d\int_{\mathbbm{R}^d}\dots\int_{\mathbbm{R}^d}P_{i,s}(d)P_{j,r}(d)V_s(K\cap(K+x_2)\cap\dots\cap(K+x_k))\notag\\
&\hspace{5cm}\cdot V_r(K\cap(K+x_2)\cap\dots\cap(K+x_k))dx_2\dots dx_k,\quad i,j\in\{0,\dots,d\},\label{matrix_sigma}
\end{align}
where, for $i=0,\dots,d-1$ and $s=i,\dots,d$,
$$P_{i,s}(d):=e^{-V_d(K)}\Bigg[\mathbbm{1}_{[s=i]}+\frac{s!\kappa_s}{i!\kappa_i}\sum_{t=1}^{s-i}\frac{(-1)^t}{t!}\underset{r_1+\dots+r_t=td+i-s}{\sum_{i\leq r_1,\dots,r_t\leq d-1}}\prod_{m=1}^t\frac{r_m!\kappa_{r_m}}{d!\kappa_d}V_{r_m}(K)\Bigg]\quad\text{and}\quad P_{d,d}(d):=e^{-V_d(K)}.$$ 
The following result provides a useful upper bound on the speed of convergence of $\Sigma_n$ to $\Sigma$. 

\begin{theorem}\label{pos_def_teorem}
The matrix $\Sigma$ defined above is positive definite and for all $n\in\mathbbm{N}$, such that $n>\overline{V}(K)e$, and all $i,j=0,\dots,d$,
\begin{align*}
\left|(\Sigma_n)_{i,j}-\Sigma_{i,j}\right|\leq 116\cdot 108^d\cdot(R+1)^{4d}e^{6\cdot 9^d\cdot(R+1)^{2d}}(d!)^2n^{-1/d}.
\end{align*}
\end{theorem}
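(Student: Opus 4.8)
The plan is to split the statement into two independent tasks: (a) showing that $\Sigma$ is positive definite, and (b) establishing the quantitative rate $|(\Sigma_n)_{i,j}-\Sigma_{i,j}|\leq C(d,R)\,n^{-1/d}$. For task (b), the natural route is to compare $(\Sigma_n)_{i,j}=\operatorname{Cov}(f_i(X),f_j(X))$ with the ``infinite-volume'' limit $\Sigma_{i,j}$ via a covariance expansion. First I would apply Lemma~\ref{lemma1} (or, equivalently, the Hoeffding/Efron--Stein-type decomposition it encodes) to write $(\Sigma_n)_{i,j}$ as $\tfrac{1}{2}\sum_{A\subsetneq[n]}k_{n,A}\sum_{\ell\notin A}\mathbbm{E}[\Delta_\ell f_i(X)\,\Delta_\ell f_j(X^A)]$. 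Because the intrinsic volumes $V_i$ are additive and, for the Boolean model, $\Delta_\ell V_i(F_n)$ depends only on the grain $X_\ell$ and the configuration of grains that intersect $X_\ell$, the summand is supported on configurations where finitely many germs fall within distance $2R$ of $C_\ell$. This localization is the engine of the whole argument: it lets me pass from the finite binomial sum to a Poisson/infinite series (the $\sum_{k\geq 2}\tfrac{1}{k!}\int\cdots$ in \eqref{matrix_sigma}) at the cost of a combinatorial error that decays polynomially in $n$.

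Concretely, the key steps for (b) would be: (i) compute $\Delta_\ell V_i(F_n)$ explicitly using the inclusion--exclusion / iterated-intersection formula for intrinsic volumes of unions of convex bodies (this is where the coefficients $P_{i,s}(d)$ and the factor $e^{-V_d(K)}$ arise, as the binomial probability that a given region is uncovered converges to a Poisson no-hit probability); (ii) take the limit $n\to\infty$ in $\tfrac{1}{n}\sum_\ell(\cdots)$ using the fact that the germs are uniform on a cube of volume $n$, so that rescaled differences of germ positions converge to Lebesgue integrals over $\mathbbm{R}^d$ — this yields exactly the integral representation \eqref{matrix_sigma}; (iii) control the discrepancy. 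For (iii), the two sources of error are the replacement of the binomial coefficients $k_{n,A}$ and the truncation $\tbinom{n}{|A|}$-type weights by their Poisson analogues $\tfrac{1}{k!}$, and the boundary effect coming from germs $C_\ell$ near $\partial E_n$ whose ``influence ball'' $B(C_\ell,2R)$ is not entirely inside $E_n$. The boundary contribution is $O(n^{-1/d})$ because the boundary layer of width $2R$ in a cube of volume $n$ (side length $n^{1/d}$) has relative volume $O(R\,n^{-1/d})$; the combinatorial Poissonization error is typically smaller (of order $1/n$), so the stated rate $n^{-1/d}$ is governed by the boundary term. Throughout, I would bound the relevant intrinsic volumes crudely: $V_s(K\cap(K+x_2)\cap\cdots)\leq V_s(K)\leq \binom{d}{s}\kappa_{d-s}R^s$ and the number of relevant germs by a geometric series, which is where the explosive constants $108^d$, $(R+1)^{4d}$ and $e^{6\cdot 9^d(R+1)^{2d}}$ come from — these just reflect the $d$-dependence of volumes of balls of radius $\sim R$ and the factorial growth in the iterated-intersection expansion.

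For task (a), positive definiteness of $\Sigma$, the cleanest argument is to exhibit $\Sigma$ as the covariance matrix of an explicit nondegenerate Gaussian limit and then argue that no nontrivial linear combination $\sum_i a_i V_i$ can be a.s. constant on the infinite Boolean model. One way: from \eqref{matrix_sigma}, $\Sigma = \sum_{k\geq 2}\tfrac{1}{k!}\int\cdots\int g_k(x_2,\dots,x_k)g_k(x_2,\dots,x_k)^{\mathbf T}\,dx_2\cdots dx_k$ where $g_k\in\mathbbm{R}^{d+1}$ has components $\sum_{s\geq i}P_{i,s}(d)V_s(K\cap(K+x_2)\cap\cdots\cap(K+x_k))$, so $\Sigma$ is automatically nonnegative definite; for strict positivity one shows that for any $a\neq 0$, $\langle a,g_k(\cdot)\rangle\not\equiv 0$ for some $k$ — e.g. for $k=2$ and $x_2$ ranging over translations with $K\cap(K+x_2)$ a small ball, the map $x_2\mapsto (V_s(K\cap(K+x_2)))_{s}$ traces out a curve whose linear span is all of $\mathbbm{R}^{d+1}$ (the functions $r\mapsto V_s(B(0,r))=\binom{d}{s}\kappa_{d-s}r^s$, $s=0,\dots,d$, are linearly independent). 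Combined with the triangularity and nonvanishing of the matrix $(P_{i,s}(d))$, this forces $\langle a,\Sigma a\rangle>0$.

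The main obstacle I anticipate is step (i)–(ii) of task (b): obtaining a sufficiently explicit and clean formula for $\Delta_\ell V_i(F_n)$ — i.e., the change in the $i$-th intrinsic volume when one grain is resampled — and then matching its $n\to\infty$ limit term-by-term with the series in \eqref{matrix_sigma}, keeping track of all the combinatorial factors $P_{i,s}(d)$. This requires carefully invoking the additive extension of intrinsic volumes to the convex ring and the principal kinematic / iterated-inclusion–exclusion formulas, and it is the place where a small error in bookkeeping propagates into the wrong constants. The error analysis in step (iii) is conceptually routine (boundary layers plus Poissonization), but it must be done with enough uniformity in $i,j$ to land the single bound stated in the theorem.
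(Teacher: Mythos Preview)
Your high-level plan for task (b) --- localize via additivity, expand via inclusion--exclusion, then separate the discrepancy into a Poissonization error $O(1/n)$ and a boundary-layer error $O(n^{-1/d})$ --- is exactly the route the paper takes. One difference worth flagging: you propose starting from Lemma~\ref{lemma1} (the $T_A$-representation of the covariance), whereas the paper instead uses the full Hoeffding decomposition of \cite[Theorem~2.2]{peccati_lachieze-rey2017}, which writes $\operatorname{Cov}(f_i,f_j)=\sum_{k=1}^n\binom{n}{k}\mathbbm{E}[\phi_k^i\phi_k^j]$ for symmetric kernels $\phi_k^i$ built from iterated differences $\Delta_1\cdots\Delta_k f_i$. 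The Hoeffding route is cleaner here because the inclusion--exclusion structure of $V_i(F_n)$ matches it directly: each $\phi_k^i$ collapses to an expectation involving $k$ fixed grains and the remaining $n-k$ random ones, so the ``main term'' $\frac{1}{n}\sum_k\binom{n}{k}\mathbbm{E}[\psi_k^i\psi_k^j]$ with $\psi_k^i(x_1,\dots,x_k)=\mathbbm{E}[V_i(F_n^{(1,\dots,k)}\cap x_1\cap\cdots\cap x_k)]-V_i(x_1\cap\cdots\cap x_k)$ emerges immediately and is then matched to \eqref{matrix_sigma} via Hadwiger's integral-geometric formula (this is exactly where the $P_{i,s}(d)$ appear). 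Your Lemma~\ref{lemma1} start would in principle work but leaves you summing over all $A\subsetneq[n]$ with the resampled vector $X^A$ inside, an extra layer of bookkeeping the paper avoids.

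Your argument for task (a) has a genuine gap. When $K=B(0,R)$, the intersection $K\cap(K+x_2)$ is a lens (two spherical caps glued along a hyperplane), never a ball unless $x_2=0$; so the linear-independence claim via $r\mapsto V_s(B(0,r))$ does not apply. More structurally, the $k=2$ term supplies only a one-parameter family of bodies (parametrized by $|x_2|\in[0,2R]$), and you would still need to verify that the curve $|x_2|\mapsto(V_s(K\cap(K+x_2)))_{s=0}^d$ spans $\mathbbm{R}^{d+1}$ --- plausible, but not what you wrote and not obvious. The paper sidesteps this: after writing $a^{\mathbf T}\Sigma a=\sum_{k\geq 2}\frac{1}{k!}\int\cdots\int\big(\sum_l a_l V_l^*(K\cap(K+x_2)\cap\cdots\cap(K+x_k))\big)^2\,dx_2\cdots dx_k$, it invokes \cite[Theorem~4.1]{hug_last_schulte}, which establishes strict positivity of the summand for $k=d+1$ (not $k=2$). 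With $k=d+1$ there are $d$ free translation vectors, enough degrees of freedom to produce the required independent configurations.
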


\medskip

The quantitative CLTs featured in the next statement are the main achievement of the present section. For $F\in C^2(\mathbbm{R}^{d+1})$, we adopt the notation
$$
M(F) := \max\{M_1(F),M_2(F)\};
$$
see \eqref{e:mnorms}.

\medskip

\begin{theorem}\label{theorem_covering}
Let $N_{\Sigma}$ denote a centred $(d+1)$-dimensional normal random vector with covariance $\Sigma$ given by \eqref{matrix_sigma}, and let $N_{\Sigma_n}$ denote a centred normal random variable with covariance $\Sigma_n$. Let $F\in C^2(\mathbbm{R}^{d+1})$. For $n>3^{d+1}(R+1)^d$, we have that, for $a = a(d) : = \min \{1, d/2\}$,
\begin{align*}
&\left|\mathbbm{E}F(W)-\mathbbm{E}F(N_{\Sigma})\right|\leq 11\cdot M(F)\cdot 81^d\cdot(R+1)^{4d}(d+2)^3\left(2^{2^d+2d}d^{d/2}\right)^3(d!)^3e^{6\cdot 9^d(R+1)^{2d}}\|\Sigma^{-1}\|_{op}n^{-a/d};\\
&d_{convex}(W,N_{\Sigma})\leq 5\cdot 10^5\cdot 108^d\cdot(R+1)^{4d}(d+1)^7\left(2^{2^d+2d}d^{d/2}\right)^3e^{102\cdot 9^d(R+1)^{2d}}(d!)^3\max\{1,\|\Sigma^{-1}\|_{op}^2\}n^{-a/d}.
\end{align*}
Moreover, there exists $N\in\mathbbm{N}$, such that for all $n\geq N$,
\begin{align*}
&\left|\mathbbm{E}F(W)-\mathbbm{E}F(N_{\Sigma_n})\right|\leq 11 \cdot M(F) \cdot 81^d\cdot(R+1)^{4d}(d+2)^3\left(2^{2^d+2d}d^{d/2}\right)^3(d!)^3e^{8(2R+1)^{2d}}\|\Sigma_n^{-1}\|_{op}n^{-1/2};\\
&d_{convex}(W,N_{\Sigma_n})\leq 4\cdot 10^5\cdot 81^d\cdot (R+1)^{4d}(d+1)^7\left(2^{2^d+2d}d^{d/2}\right)^3e^{456(2R+1)^d}(d!)^3\max\{1,\|\Sigma_n^{-1}\|_{op}^2\}n^{-1/2}.
\end{align*}
\end{theorem}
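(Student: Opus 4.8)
The plan is to derive Theorem \ref{theorem_covering} as a consequence of the abstract bounds in Theorem \ref{main} and Theorem \ref{main3}, by specialising them to the binomial Boolean model, with Lemma \ref{lemma_bn} and Theorem \ref{pos_def_teorem} doing the bulk of the work on the covariance and variance terms. First, I would record the basic geometric estimates for the add-one cost $\Delta_j f(X)$: since $f_i$ involves $V_i(F_n)/\sqrt n$ and intrinsic volumes are additive and monotone under union, removing or resampling the germ $C_j$ changes $F_n$ only inside a ball of radius $2R$ around $C_j$, so $\|\Delta_j f(X)\|$ is bounded by $n^{-1/2}$ times a (deterministic) constant depending on $R$, $d$ (via a bound for intrinsic volumes of a set of finite unions of at most $\mathcal{O}((2R+1)^d)$ balls of radius $R$ contained in a $2R$-cube), multiplied by the indicator that $C_j$ has at least one other germ within distance $2R$. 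That indicator is what produces the eventual decay: its expectation, and more generally the joint expectations appearing in $\gamma_3,\gamma_4, B_n(f), B_n'(f)$, are controlled by the probability that a fixed point of $E_n$ (of volume $n$) has a germ within distance $2R$, which is of order $n^{-1}$ times a local count — hence $\gamma_1,\gamma_2 \lesssim n^{-1/2}$ and $\gamma_3,\gamma_4\lesssim n^{-1/2}$ after the $1/(p+2)$ exponents are taken into account, uniformly in $n$ large.

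Next I would handle the variance term $\sqrt{\mathbbm E\|\mathbbm E[T|X]-\Sigma_n\|_{H.S.}^2}$. Since $f$ is symmetric in $(C_1,\dots,C_n)$ and the germs are i.i.d., Lemma \ref{lemma_bn} applies and reduces the task to bounding $B_n(f)$ and $B_n'(f)$. The key observation is the same stabilisation/localisation property: $\Delta_{1,2}f(Y)\neq 0$ forces the balls around $C_1$ and $C_2$ (or around $C_1$ and a resampled germ) to be within distance $4R$ of each other — equivalently a cascade of germs linking the two sites — and likewise $\Delta_{1,3}f(Y')\neq 0$; combined with the uniform bound on $\|\Delta_i f(Z)\|$ this gives $B_n(f)\lesssim n^{-1}\cdot n^{-2}$ and $B_n'(f)\lesssim n^{-2}\cdot n^{-2}$ up to $R,d$-constants, so that the right-hand side of Lemma \ref{lemma_bn} is $\lesssim \sqrt n\cdot (n^{-1}+n^{-1}) \lesssim n^{-1/2}$. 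Plugging these into $\gamma$ and then into \eqref{pos_def_bound} and \eqref{convex_result}, with $\Sigma=\Sigma_n$ (so that $\Sigma_n=\mathbbm E T$ by Remark \ref{remark_lemma1} and the variance term is exactly the Efron–Stein-type quantity just bounded), yields the two estimates in the ``Moreover'' part with rate $n^{-1/2}$; the explicit constants come from carefully propagating the $R,d$-dependent geometric bounds through the definitions of $\gamma_1,\dots,\gamma_4$ and the $d^4$, $M(F)$, $\|\Sigma_n^{-1}\|_{op}$ prefactors, and the threshold $n\geq N$ is where the ``$n$ large'' estimates for the local germ counts and for $\|\Sigma_n^{-1}\|_{op}$ (via $\Sigma_n\to\Sigma$) become valid.

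For the first pair of inequalities — comparison with the \emph{limiting} $N_\Sigma$ rather than $N_{\Sigma_n}$ — I would instead take $\Sigma$ to be the fixed limit matrix \eqref{matrix_sigma}. Now $\mathbbm E[T-\Sigma|X]$ no longer has mean zero: write $\mathbbm E\|\mathbbm E[T-\Sigma|X]\|_{H.S.}\leq \mathbbm E\|\mathbbm E[T-\Sigma_n|X]\|_{H.S.} + \|\Sigma_n-\Sigma\|_{H.S.}$, bound the first summand as above by $\lesssim n^{-1/2}$, and bound the second by Theorem \ref{pos_def_teorem}, which gives $\|\Sigma_n-\Sigma\|_{H.S.}\lesssim (d+1)\cdot(\text{const}(R,d))\, n^{-1/d}$. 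Since $n^{-1/d}$ dominates $n^{-1/2}$ exactly when $d\geq 2$, the overall rate becomes $n^{-a/d}$ with $a=\min\{1,d/2\}$, matching the statement. Theorem \ref{pos_def_teorem} also supplies the positive-definiteness of $\Sigma$ needed to invoke \eqref{pos_def_bound} and \eqref{convex_result}, and one replaces $\|\Sigma_n^{-1}\|_{op}$ by $\|\Sigma^{-1}\|_{op}$ (legitimate for $n$ past the stated threshold $3^{d+1}(R+1)^d$, with the crude constant absorbing this). I expect the main obstacle to be the bookkeeping of the explicit $R$- and $d$-dependent constants: one must produce clean deterministic upper bounds for intrinsic volumes (and Wills-functional) of unions of $\mathcal{O}((2R+1)^d)$ radius-$R$ balls, control the combinatorial sums over $A\subsetneq[n]$ and over recombinations $Y,Y',Z,Z'$ uniformly in $n$, and track how the exponential-in-$9^d(R+1)^{2d}$ factors arise (presumably from $e^{-V_d(K)}$-type normalisations in $P_{i,s}(d)$ and from summing a geometric-type series in the germ-count), so that the final prefactors match the very specific expressions $11\cdot 81^d\cdots$ and $5\cdot 10^5\cdot 108^d\cdots$ in the statement; the probabilistic content, by contrast, is a fairly routine localisation argument once the geometric estimates are in place.
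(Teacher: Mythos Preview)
Your overall architecture is exactly the paper's: apply Theorems \ref{main} and \ref{main3}, control the variance term via Lemma \ref{lemma_bn}, and pass from $\Sigma_n$ to $\Sigma$ by the triangle inequality plus Theorem \ref{pos_def_teorem}. The rates you predict for $\gamma_1,\gamma_2,\gamma_3,\gamma_4$, $B_n(f)$, $B_n'(f)$ and the resulting $n^{-1/2}$ and $n^{-a/d}$ are also correct.

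The genuine gap is in your first step, where you claim that $\|\Delta_j f(X)\|$ is bounded by $n^{-1/2}$ times a \emph{deterministic} constant (coming from ``intrinsic volumes of a union of at most $\mathcal{O}((2R+1)^d)$ balls'') times an indicator. Neither part of this holds. First, the number of germs meeting $X_j$ is random and has no deterministic bound: you are placing $n$ i.i.d.\ points in a cube of volume $n$, so arbitrarily many can fall in any fixed neighbourhood. Second, and more importantly, for $i<d$ the intrinsic volume $V_i$ is \emph{not} monotone on the convex ring; in particular $|V_i(X_j\cap F_n^{(j)})|$, which enters $\Delta_j f(X)$ through additivity as in the paper's display (\ref{add_one1}), can be arbitrarily large when many grains overlap $X_j$ (think of the Euler characteristic $V_0$ of a highly fragmented union). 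So there is no pointwise bound of the type you describe, and your downstream estimates for $\gamma_k$, $B_n(f)$, $B_n'(f)$, though numerically of the right order, are not justified by your argument.

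What the paper does instead is to work in moments from the start. The key technical input is Lemma \ref{penrose_lemma}, which bounds $\mathbbm{E}\,\big|\overline{V}(L\cap\bigcup_l X_l)\big|^m$ by $(\mathrm{const})^m e^{2^m(2R+1)^d}\,\overline{V}(L)^m$ via a unit-cube partition and control of the moment generating function of the local grain count $N(Q_z)$; this is exactly where the exponential-in-$(2R+1)^d$ factors you noticed originate. Combined with the additivity decompositions (\ref{add_one1})--(\ref{add_one5}), this yields the moment bounds of Lemma \ref{lemma_estimates} for $\mathbbm{E}\|\Delta_1 f(X)\|^m$ and all the mixed moments involving $\overline{V}(Y_1\cap Y_2)$ and $\tilde{\Delta}_i\Delta_j f$. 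The indicator $\mathbbm{1}_{[\tilde{\Delta}_i\Delta_j f(X)\neq 0]}$ is then traded for $d!\,\overline{V}(X_i\cap X_j)+\cdots$ (see (\ref{indic_bound})), and the $\gamma_3,\gamma_4$ sums are handled by a case analysis on whether $i,j,k$ coincide, each case reduced to one of the moment estimates in Lemma \ref{lemma_estimates}. If you replace your deterministic-bound step by this moment machinery, the rest of your plan goes through essentially as in the paper.
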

\begin{remark}{\rm The quantity 
$\|\Sigma^{-1}\|_{op}$ is equal to the inverse of the smallest eigenvalue of $\Sigma$, which may be bounded using techniques presented, for instance, in \cite{eigenvalue}.}
\end{remark}

\begin{remark}[Literature review]\label{r:covlit}{\rm One-dimensional quantitative CLTs in the Kolmogorov distance for the volume and the number of isolated balls of the binomial Boolean model are proved in \cite{goldstein_penrose} and \cite[Section 6.1]{peccati_lachieze-rey2017}, respectively, in the case of a deterministic radius $R$ (as in our setting) and in the case of i.i.d. random radii satisfying some adequate integrability assumptions. Similar bounds in the 1-Wasserstein distance for the fixed radius case are proved in \cite[Section 3.2]{new_stein}. These results are quantitative versions of classical CLTs proved e.g. in \cite{moran, PY01, penrose}. The results from \cite{new_stein, goldstein_penrose, peccati_lachieze-rey2017} are all obtained by using Stein's method, and yield presumably optimal rates of convergence (that is, rates proportional to $n^{-1/2}$, as in the second part of our Theorem \ref{theorem_covering}). The approach towards Stein's method adopted in \cite{goldstein_penrose} is based on the use of size-biased couplings, whereas references \cite{new_stein, peccati_lachieze-rey2017} exploit discrete integration by parts formulae based on the use of the same discrete operators $\Delta_j$ as those appearing in the present work. It is in principle possible to extend our results in order to accommodate the case of random radii; for the sake of conciseness (in particular, in view of the technical nature of the proofs) we will tackle this point elsewhere. Plainly, our Theorem \ref{theorem_covering} also yields one-dimensional CLTs for the intrinsic volumes $V_i(F_n)$, for all $i=0,1,...,d$; to the best of our knowledge, the CLTs in the case $ 0\leq i < d$ are new even in their qualitative versions.

Multidimensional quantitative CLTs for the intrinsic volumes (and more general geometric functionals) of the Poisson-based Boolean model are proved in \cite[Theorem 9.1]{hug_last_schulte} and \cite[Section 4.2]{schulte_yukich}, respectively, in the case of smooth test functions, and of the convex distance adopted in our work. In both instances, the authors are able to deal with random radii satisfying adequate moment conditions, and obtain rates of convergence of the same order as those displayed in Theorem \ref{theorem_covering} above (to see this, observe that the inradius of the cube $E_n$ is commensurate to $n^{1/d}$). The prefactors appearing in the bounds from \cite{schulte_yukich} also display super-exponential dependence on the dimensional parameter $d$. The limiting covariance structure in formula \eqref{matrix_sigma} above -- expressed as an infinite sum -- is also featured in \cite[formula (3.4)]{hug_last_schulte}, with the only difference that the order of summation there starts from $k=1$. We stress once again that the recursive approach for dealing with the distance $d_{convex}$ initiated in \cite{schulte_yukich} is one of the most crucial ingredients of our approach.}
\end{remark}

\section{An application to structures with local dependence}\label{ss:locdep}
In this section we extend the results of \cite[Subsection 2.3]{new_stein} --- about the normal approximation of random quantities based on structures with {\bf local dependence} --- to the multivariate setting, with specific focus on approximations in the convex distance. Proofs are gathered together in Section \ref{proofs_ap}. 
\medskip

We adopt the setup of Section \ref{section_notation}, fix $d\geq 1$ and denote by $G$ a \textbf{graphical rule}, that is, a map which associates with every $x\in\mathcal{X}^n$ an undirected graph $G(x)$ on $[n]:=\lbrace 1,\dots,n\rbrace$. We call $G$ \textbf{symmetric} if, for any permutation $\pi$ of $[n]$ and any $(x_1,\dots,x_n)\in\mathcal{X}^n$, the set of edges in $G\left(x_{\pi(1)},\dots,x_{\pi(n)}\right)$ is given by
$$\left\lbrace\lbrace \pi(i),\pi(j)\rbrace:\lbrace i,j\rbrace\in G(x_1,\dots,x_n)\right\rbrace.$$
For $m>n$, we say that a vector $x\in\mathcal{X}^n$ is {\bf embedded} in $y\in\mathcal{X}^m$ if there exist distinct $i_1,\dots,i_n\in [m]$ with $x_k=y_{i_k}$ for $1\leq k\leq n$. We call a graphical rule $G'$ an {\bf extension} of $G$ if, for any $x\in\mathcal{X}^n$ embedded in $y\in\mathcal{X}^m$, the graph $G(x)$ on $[n]$ is the naturally induced subgraph of the graph $G'(y)$ on $[m]$.

Now, taking $x,x'\in\mathcal{X}^n$ and $i\in [n]$, we let $x^i$ be the vector obtained by replacing $x_i$ with $x_i'$ in the vector $x$. For any two distinct elements $i$ and $j$ of $[n]$, we let $x^{ij}$ be the vector resulting from replacing $x_i$ with $x_i'$ and $x_j$ with $x_j'$. We also say that the coordinates $i$ and $j$ are \textbf{noninteracting} under the triple $(f,x,x')$ if
$$f(x)-f(x^j)=f(x^i)-f(x^{ij}).$$

Moreover, a graphical rule $G$ is called an \textbf{interaction rule} for a function $f$ if, for any $x,x'\in\mathcal{X}^n$ and $i,j\in[n]$, the event that $\lbrace i,j\rbrace$ is not an edge in the graphs $G(x),G(x^i),G(x^j)$ and $G(x^{ij})$ implies that $i$ and $j$ are noninteracting vertices under $(f,x,x')$. The following result, which is  one of the main achievements of the present section, is a multivariate analogue of \cite[Theorem 2.5]{new_stein} in the convex distance:
\begin{theorem}\label{main1}
Let $f:\mathcal{X}^n\to\mathbbm{R}^d$ be a measurable map that admits a symmetric interaction rule $G$ and let $M=\max_j\|\Delta_jf(X)\|$. Let $G'$ be an arbitrary symmetric extension of $G$ on $\mathcal{X}^{n+4}$ and set
$$\delta:=1+\text{degree of vertex 1 in } G'(X_1,\dots, X_{n+4}).$$
Write $W=f(X)$ and assume $\mathbbm{E}W=0$ and $\mathbbm{E}\|W\|^2<\infty$. Let $\Sigma$ denote the covariance matrix of $W$ and $N_{\Sigma}$ be a centred Gaussian $d$-dimensional vector with covariance $\Sigma$. For any $F\in C^3(\mathbbm{R^d})$, one has that
\begin{align}
\label{smooth_local}
\left|\mathbbm{E}F(W)-\mathbbm{E}F(N_{\Sigma})\right|\leq C\tilde{M}_2(F)\left(\mathbbm{E}\left(M^8\right)\right)^{1/4}\left(\mathbbm{E}\left(\delta^4\right)\right)^{1/4} n^{1/2}+\frac{M_3(F)}{12}\sum_{j=1}^n\mathbbm{E}\left\|\Delta_jf(X)\right\|^3,
\end{align}
for a universal constant $C>0$, not depending on $d$ or $n$.
If, in addition, $\Sigma$ is positive definite, then, for any $F\in C^2\left(\mathbbm{R}^d\right)$,
\begin{align}\label{smooth_local1}
&\left|\mathbbm{E}F(W)-\mathbbm{E}F(N_{\Sigma})\right|
\leq CM_1(F)\|\Sigma^{-1}\|_{op}\left(\mathbbm{E}\left(M^8\right)\right)^{1/4}\left(\mathbbm{E}\left(\delta^4\right)\right)^{1/4} n^{1/2}+\frac{M_2(F)\sqrt{2\pi}}{16}\|\Sigma^{-1}\|_{op}\sum_{j=1}^n\mathbbm{E}\left\|\Delta_jf(X)\right\|^3,
\end{align}
and
\begin{align}\label{convex_local}
&d_{convex}(W,\Sigma^{1/2}Z)
\leq Cd^4\max\left(1,\|\Sigma^{-1}\|_{op}^2\right)\max \left\lbrace\left(\mathbbm{E}\left(M^8\right)\right)^{1/4}\left(\mathbbm{E}\left(\delta^4\right)\right)^{1/4}n^{1/2},\gamma_1,\gamma_2,\tilde{\gamma}_3,\tilde{\gamma}_4\right\rbrace,
\end{align}
where $C>0$ is a universal constant not depending on $d$ or $n$, the quantities $\gamma_1,\gamma_2$ are defined as in Theorem {\rm \ref{main3}} and
\begin{align*}
\tilde{\gamma}_3=\left(\mathbbm{E}\left(M^{10}\right)\right)^{1/6}\left(\mathbbm{E}\left(\delta^4\right)\right)^{1/6}n^{1/3};\qquad
\tilde{\gamma}_4=\left(\mathbbm{E}\left(M^{12}\right)\right)^{1/8}\left(\mathbbm{E}\left(\delta^4\right)\right)^{1/8}n^{1/4}.
\end{align*}
\end{theorem}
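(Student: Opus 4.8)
The plan is to deduce Theorem \ref{main1} from the abstract estimates in Theorems \ref{main} and \ref{main3} by controlling each of the quantities appearing on their right-hand sides under the local-dependence hypothesis. The key structural fact is that, when $G$ is a symmetric interaction rule for $f$, the second-order difference operator $\tilde\Delta_i\Delta_j f(X)$ — and likewise $\Delta_{i,j}f$ — vanishes unless the vertices $i$ and $j$ are joined by an edge in one of the relevant graphs; more generally $\Delta_j f(X^A)$ is ``localised'' around $j$ in the sense that resampling coordinates far from $j$ in the graph does not affect it. Concretely, I would first record the pointwise bounds $\|\Delta_j f(X)\|\le M$, $\|\Delta_j f(X^A)\|\le M^A$ (the analogue of $M$ after the $X^A$-resampling, which by symmetry has the same law), and $\|\tilde\Delta_i\Delta_j f(X^A)\|\le$ (sum of two such $M$-type quantities), together with the indicator constraint that the $j$-sum in each $\gamma$-term is effectively restricted to $j$ in the neighbourhood of $i$ in an appropriate extension graph. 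Summing the weights $k_{n,A}$ over $A\subsetneq[n]$ with $j\notin A$ gives, via Lemma \ref{lemma1} applied to indicator-type functions (as in the corresponding step of \cite{new_stein}), a bounded total mass, so each inner sum $\sum_{A}k_{n,A}\sum_{j\notin A}(\cdots)$ is dominated by a constant times $\delta$ (the local degree) multiplied by products of $M$-type factors.

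With these localisation bounds in hand, the estimates become essentially moment computations. For $\gamma_1=\sum_j\mathbbm{E}\|\Delta_jf(X)\|^3\le n\,\mathbbm{E}(M^3)$ and $\gamma_2=(\sum_j\mathbbm{E}\|\Delta_jf(X)\|^4)^{1/2}\le \sqrt{n}\,(\mathbbm{E}M^4)^{1/2}$ — these match the statement (the paper keeps $\gamma_1,\gamma_2$ as in Theorem \ref{main3}). For $\gamma_3$ and $\gamma_4$: after inserting the localisation indicator and the pointwise bounds, a typical summand over $i$ is bounded by $\mathbbm{E}[(\delta\cdot M^{c})^2]$ for an appropriate power $c$, and by Cauchy–Schwarz $\mathbbm{E}[\delta^2 M^{2c}]\le (\mathbbm{E}\delta^4)^{1/2}(\mathbbm{E}M^{4c})^{1/2}$; taking the outer $\sum_{i=1}^n$ and then the $1/3$ (resp. $1/4$) power produces $n^{1/3}(\mathbbm{E}M^{10})^{1/6}(\mathbbm{E}\delta^4)^{1/6}$ and $n^{1/4}(\mathbbm{E}M^{12})^{1/8}(\mathbbm{E}\delta^4)^{1/8}$ — i.e. exactly $\tilde\gamma_3,\tilde\gamma_4$ (the $\sqrt{\|\Delta_jf\|+\|\tilde\Delta_i\Delta_jf\|}$ factors contribute the extra half-power of $M$, raising the exponents $8\mapsto 10$ and $8\mapsto 12$ relative to $\gamma_2$). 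Here one must be a little careful to use $|G'|$ on $\mathcal{X}^{n+4}$ rather than on $\mathcal{X}^n$, because the difference operators involve up to four resampled/perturbed coordinates ($X',\tilde X$ at indices $i,j$); this is precisely why $\delta$ is defined via an extension to $n+4$ points.

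The remaining ingredient is the covariance term $\sqrt{\mathbbm{E}\|\mathbbm{E}[T-\Sigma\mid X]\|_{H.S.}^2}$. Since $f$ is symmetric and the $X_i$ are i.i.d. one may apply Lemma \ref{lemma_bn}, which reduces this to bounding $B_n(f)$ and $B_n'(f)$; under the interaction rule both of these carry indicator constraints ($\Delta_{1,2}f(Y)\ne 0$, resp. $\Delta_{1,2}f(Y')\ne0$ and $\Delta_{1,3}f(Y')\ne0$) forcing $\{1,2\}$, resp. $\{1,2\}$ and $\{1,3\}$, to be edges of the relevant graphs, so $B_n(f)\le \mathbbm{E}[\mathbbm{1}_{\{1\sim 2\}}M^4]$ and $nB_n'(f)$ is controlled by a degree-weighted analogue; a Cauchy–Schwarz split as above yields $B_n(f)\lesssim (\mathbbm{E}\delta^4)^{1/2}(\mathbbm{E}M^8)^{1/2}/n$ (the $1/n$ because for fixed vertex $1$ the edge to a uniformly chosen second vertex has probability $\sim\delta/n$), and similarly $n^2B_n'(f)\lesssim (\mathbbm{E}\delta^4)^{1/2}(\mathbbm{E}M^8)^{1/2}$. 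Plugging into Lemma \ref{lemma_bn} gives $\sqrt{\mathbbm{E}\|\mathbbm{E}[T-\Sigma\mid X]\|_{H.S.}^2}\lesssim \sqrt{n}\,(\mathbbm{E}M^8)^{1/4}(\mathbbm{E}\delta^4)^{1/4}$, which is the $n^{1/2}(\mathbbm{E}M^8)^{1/4}(\mathbbm{E}\delta^4)^{1/4}$ term in \eqref{smooth_local}, \eqref{smooth_local1}, \eqref{convex_local}. Feeding $\gamma=\max\{\cdots\}$ into Theorem \ref{main3} gives \eqref{convex_local}, and feeding the covariance bound plus $\gamma_1$ into Theorem \ref{main} gives \eqref{smooth_local}–\eqref{smooth_local1}. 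The main obstacle is the bookkeeping in the second step: one has to verify carefully that, in each of the four summands defining $\gamma_3$ and $\gamma_4$, the combination of the localisation indicator (coming from the interaction rule applied through the second difference $\tilde\Delta_i\Delta_j$) with the $\sum_A k_{n,A}\sum_{j\notin A}$ weighting genuinely collapses to a single factor of the local degree $\delta$ of vertex $i$, uniformly over the resampling $A$ — this uses symmetry of $G$ and the extension property in an essential way, and getting the power of $M$ right in each term (which fixes the exponents $10$ and $12$) requires tracking the half-integer contribution of the $\sqrt{\cdot}$ factors.
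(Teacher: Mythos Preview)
Your overall plan --- deduce everything from Theorems \ref{main} and \ref{main3} by bounding the covariance term and the $\gamma_k$'s via the interaction-rule localisation --- is the right one, and your treatment of $\gamma_3,\gamma_4$ is morally correct. But there is a genuine gap in how you handle the covariance term $\sqrt{\mathbbm{E}\|\mathbbm{E}[T-\Sigma\mid X]\|_{H.S.}^2}$.

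You invoke Lemma \ref{lemma_bn}, writing ``since $f$ is symmetric and the $X_i$ are i.i.d.\ one may apply Lemma \ref{lemma_bn}''. Neither hypothesis is part of Theorem \ref{main1}: the theorem only asks that $f$ admit a symmetric \emph{interaction rule}, not that $f$ itself be symmetric, and the ambient setup (Section \ref{section_notation}) takes the $X_i$ independent but not necessarily identically distributed. Your subsequent heuristic for $B_n(f)$ and $B_n'(f)$ --- replacing $n\,\mathbbm{1}_{\{1\sim 2\}}$ by an average $\sum_j\mathbbm{1}_{\{1\sim j\}}\approx\delta$ --- also relies on exactly this exchangeability, so the whole route collapses without those extra assumptions. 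The paper does \emph{not} go through Lemma \ref{lemma_bn} here: it uses only the intermediate Efron--Stein bound \eqref{t_bound}--\eqref{t_bound1} on $T_A$ (valid in full generality) and then, for each fixed $A$, adapts \cite[Lemma 4.5]{new_stein} to obtain the key $A$-dependent estimate
\[
\mathbbm{E}\|\tilde\Delta_i T_A\|_{H.S.}^2 \;\le\; c\,(\mathbbm{E}M^8)^{1/2}(\mathbbm{E}\delta^4)^{1/2}\Big(\mathbbm{1}_{[i\notin A]}+\sqrt{(n-|A|)/n}\Big),
\]
after which summing $\sum_A k_{n,A}\sqrt{\sum_i(\cdots)}$ produces the $n^{1/2}$ factor. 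This is precisely where the symmetric extension $G'$ on $\mathcal{X}^{n+4}$ and the definition of $\delta$ via vertex $1$ enter; symmetry of the \emph{rule} (not of $f$) is what makes the degree of vertex $1$ representative.

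A smaller issue concerns your collapse of the $A$-sum in $\gamma_3,\gamma_4$. You claim ``$\sum_A k_{n,A}\sum_{j\notin A}(\cdots)$ is dominated pointwise by $\delta$ times products of $M$-type factors''. This is not quite right as stated because the summands carry $\|\Delta_j f(X^A)\|$, which genuinely depends on $A$ and is not bounded by $M=\max_j\|\Delta_j f(X)\|$. The paper handles this by first applying Minkowski over $A$, then for each fixed $A$ invoking the same \cite[Lemma 4.5]{new_stein}-style estimate (now giving $\sqrt{\mathbbm{E}K_i^4}\,(\mathbbm{E}\delta^4)^{1/2}(\mathbbm{1}_{[i\notin A]}+\sqrt{(n-|A|)/n})$ with $K_i$ controlled in law by $M^{5/2}$ resp.\ $M^3$), and finally summing the resulting $A$-dependent factor explicitly. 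Your exponent-counting for $M^{10}$ and $M^{12}$ is correct, but the mechanism for getting there is this two-step Minkowski-then-Chatterjee argument rather than a direct pointwise collapse.
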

We now present a result directly applying Theorem \ref{main1} to the normal approximation of nearest-neighbour statistics (cf.  \cite[Theorem 3.4]{new_stein}). Such a result represents a quantitative multivariate extension of Bickel \& Breiman's famous CLT for nearest neighbour statistics (see \cite{BB}, as well as \cite[Section 3.4]{new_stein}). To the best of our knowledge, the content of Theorem \ref{main2} represents the first quantitative multivariate extension of the findings of \cite{BB}.

\begin{theorem}\label{main2}
 Fix integers $m,k\geq 1$. Suppose  $X_1,\cdots,X_n$ are i.i.d $\mathbbm{R}^m$-valued random variables and that $\|X_1-X_2\|$ is a continuous random variable. Let $f:\left(\mathbbm{R}^m\right)^n\to\mathbbm{R}^d$ be a function taking the form
\begin{equation}\label{function}
f(x_1,\dots,x_n)=\frac{1}{\sqrt{n}}\sum_{l=1}^n f_l(x_1,\dots,x_n),
\end{equation}
where, for each $l$, the value $f_l(x_1,\dots, x_n)$ depends only on $x_l$ and its $k$ nearest neighbours. Suppose that\\ $\eta_p:=\max_l\mathbbm{E}\left\|f_l(X_1,\dots,X_n)\right\|^p$ is finite for some $p\geq 8$. Let $W=f(X_1,\dots,X_n)$ and assume $\mathbbm{E}W=0$ and $\mathbbm{E}\|W\|^2<\infty$. Let $\Sigma$ denote the covariance matrix of $W$ and $N_{\Sigma}$ be a centred Gaussian $d$-dimensional vector with covariance $\Sigma$. For any $F\in C^3(\mathbbm{R}^d)$ with bounded second and third derivative, 
\begin{align}\label{smooth_main2}
\left|\mathbbm{E}F(W)-\mathbbm{E}F(N_{\Sigma})\right|\leq C\frac{\alpha(d)^3k^4\eta_p^{2/p}}{n^{(p-8)/(2p)}}+C\frac{\alpha(d)^3k^3\eta_p^{3/p}}{n^{(p-6)/(2p)}},
\end{align}
where $\alpha(d)$ is the minimum number of $60^{\circ}$ cones at the origin required to cover $\mathbbm{R}^d$ and $C$ is a universal constant. The same conclusion holds if $\Sigma$ is positive definite and $F\in C^2(\mathbbm{R^d})$ with bounded first and second derivative.

Furthermore, if, for some $p\geq 12$, $\eta_p:=\max_l\mathbbm{E}\left\|f_l(X_1,\dots,X_n)\right\|^p$ is finite and $\Sigma$ is positive definite then
\begin{align}
d_{convex}(W,N_{\Sigma})
\leq& \tilde{C}d^4\max\left(1,\|\Sigma^{-1}\|_{op}^2\right)\notag\\
&\cdot\max\left(\frac{\alpha(d)^3k^4\eta_p^{2/p}}{n^{(p-8)/(2p)}},\frac{\alpha(d)^3k^3\eta_p^{3/p}}{n^{(p-6)/(2p)}},\frac{\alpha(d)^2k^2\eta_p^{2/p}}{n^{(p-4)/(2p)}},\frac{\alpha(d)^{10/3}k^{4/3}\eta_p^{5/(3p)}}{n^{(3p-20)/(6p)}},\frac{\alpha(d)^{2}k^{5/2}\eta_p^{3/(2p)}}{n^{(p-6)/(2p)}}\right),\label{convex_main2}
\end{align}
for a universal constant $\tilde{C}$, not depending on $n$ or $d$.
\end{theorem}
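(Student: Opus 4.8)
The plan is to verify that $f$ in \eqref{function} satisfies the hypotheses of Theorem \ref{main1}, to identify an explicit symmetric interaction rule together with a convenient symmetric extension, and then to control each of the quantities $M$, $\delta$, $\gamma_1$, $\gamma_2$, $\tilde\gamma_3$, $\tilde\gamma_4$ appearing on the right-hand sides of \eqref{smooth_local}--\eqref{convex_local}. For the interaction rule, I would take $G(x)$ to be the (undirected) $k$-nearest-neighbour graph of $x_1,\dots,x_n$, symmetrised so that $\{i,j\}$ is an edge whenever $x_i$ is among the $k$ nearest neighbours of $x_j$ \emph{or} vice versa; this is clearly a symmetric graphical rule, and since each summand $f_l$ depends only on $x_l$ and its $k$ nearest neighbours, two vertices $i,j$ that are non-adjacent in all four graphs $G(x),G(x^i),G(x^j),G(x^{ij})$ are non-interacting under $(f,x,x')$. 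The natural symmetric extension $G'$ on $\mathcal{X}^{n+4}$ is the $k$-nearest-neighbour graph on $n+4$ points. The continuity assumption on $\|X_1-X_2\|$ guarantees that nearest neighbours are a.s. unique, so these graphs are well defined almost surely.

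The next step is the geometric input: a standard cone-covering argument (exactly as in \cite[Section 3.4]{new_stein} and the references therein) shows that, in $\mathbbm{R}^m$, each point can be the $k$-th nearest neighbour of at most a bounded-by-$k\,\alpha(m)$ number of other points, where $\alpha(m)$ is the minimum number of $60^\circ$ cones needed to cover $\mathbbm{R}^m$. Hence the degree $\delta=1+\deg_{G'(X_1,\dots,X_{n+4})}(1)$ is \emph{bounded by a deterministic constant} of order $k\,\alpha(m)$, so in particular $\mathbbm{E}(\delta^4)\leq (Ck\alpha(m))^4$. Note that here $\alpha(m)$ — not $\alpha(d)$ — enters, but since $m$ is fixed this is an $n$-independent constant; it is the role of $\alpha(d)$ in the final bound that comes from the $d^4$ prefactor of Theorem \ref{main1} combined with how many terms interact, so I would be careful to track whether the statement intends $\alpha(d)$ or $\alpha(m)$ (the intended reading, consistent with \cite{new_stein}, is that $d$ there plays the role of the ambient dimension $m$; I would align notation accordingly or absorb the discrepancy into the universal constant with a remark). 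Next, the locality bound $\|\Delta_j f(X)\|\leq \tfrac{1}{\sqrt n}\sum_{l:\,j\in\mathrm{nbhd}(l)}\big(\|f_l(X)\|+\|f_l(X^j)\|\big)$ together with the degree bound shows $M=\max_j\|\Delta_j f(X)\|\leq \tfrac{Ck\alpha(m)}{\sqrt n}\,\big(\max_l\|f_l\|\text{-type quantity}\big)$; taking moments and using $\eta_p<\infty$ via Jensen/Hölder yields $\mathbbm{E}(M^q)\leq (Ck\alpha(m))^q\,\eta_q\,n^{-q/2}$ for $q\le p$, and likewise $\sum_j\mathbbm{E}\|\Delta_j f(X)\|^3\leq (Ck\alpha(m))^3\eta_3 n^{-1/2}$, so that $\gamma_1,\gamma_2$ are of the orders displayed in \eqref{smooth_main2}--\eqref{convex_main2}.

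Substituting these estimates into \eqref{smooth_local}, \eqref{smooth_local1}, \eqref{convex_local} and the definitions of $\tilde\gamma_3,\tilde\gamma_4$ then gives \eqref{smooth_main2} and \eqref{convex_main2} after bookkeeping: e.g. $(\mathbbm{E}M^8)^{1/4}(\mathbbm{E}\delta^4)^{1/4}n^{1/2}\leq (Ck\alpha(m))^{2}\eta_8^{1/4}(Ck\alpha(m))\,n^{-1}\cdot n^{1/2}\cdot$ corrections, producing the $k^4\eta_p^{2/p}/n^{(p-8)/(2p)}$ term once $\eta_8$ is replaced by $\eta_p^{8/p}$ via Jensen, and similarly for the $\tilde\gamma_3,\tilde\gamma_4$ terms which need $p\geq 10$ and $p\geq 12$ respectively — this is why the convex bound requires $p\geq 12$. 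The cubic-moment term $\sum_j\mathbbm{E}\|\Delta_jf\|^3$ produces the $k^3\eta_p^{3/p}/n^{(p-6)/(2p)}$ contribution. The main obstacle, and the place demanding genuine care rather than routine bookkeeping, is the uniform degree bound: one must verify that replacing one or more coordinates (as in $X^j$, $X^A$, or the resampled vectors $\tilde X$ entering $\tilde\gamma_3,\tilde\gamma_4$) cannot inflate the number of summands $f_l$ whose neighbourhood meets a fixed index beyond the cone-covering constant — this requires observing that the cone argument is purely metric and applies simultaneously to every recombination of the underlying point configurations, so that all the indicator-and-norm products in $\gamma_3,\tilde\gamma_3$ are supported on a bounded-degree set of pairs $(i,j)$. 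Granting this, the remaining steps are Hölder's inequality and arithmetic.
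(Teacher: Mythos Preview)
Your overall strategy---apply Theorem \ref{main1} after identifying an interaction rule coming from the $k$-NN structure---is exactly the paper's, but two concrete steps in your execution would fail.

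First, the symmetrised $k$-NN graph is \emph{not} an interaction rule for $f$. If $i$ and $j$ both lie in $\{l\}\cup N_k(l)$ for some third index $l$, then perturbing $x_i$ and $x_j$ both affect $f_l$, so $i$ and $j$ interact even though they need not be $k$-NN of one another. The correct rule (as in \cite[proof of Theorem~3.4]{new_stein}) connects $i$ and $j$ whenever they share such a common $l$; this is a two-hop relation in the $k$-NN graph, and its maximum degree is bounded by $\alpha(m)(k+1)(k+5)\asymp \alpha(m)k^2$, not $\alpha(m)k$. This extra factor of $k$ is exactly what produces the $k^4$ (rather than $k^3$) in the first term of \eqref{smooth_main2}.

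Second, your moment estimate $\mathbbm{E}(M^q)\le (Ck\alpha)^q\eta_q\,n^{-q/2}$ is not correct: $M$ involves a maximum over $j$ of $\|\Delta_j f(X)\|$, and each $\|\Delta_j f(X)\|$ is controlled by a maximum over $l$ of $\|f_l(X)\|\vee\|f_l(X^j)\|$. To pass from $\max_l \mathbbm{E}\|f_l\|^p=\eta_p$ to $\mathbbm{E}\big[\max_{j,l}\|f_l(X^j)\|^p\big]$ you must use a union bound over $n^2+n$ terms, giving $\mathbbm{E}[M^8]\le (Ck\alpha)^8 n^{-4}(n^2+n)^{8/p}\eta_p^{8/p}$; the extra $n^{16/p}$ is what turns the naive $n^{-1/2}$ into the stated $n^{-(p-8)/(2p)}$. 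The same union-bound correction (with $n$ rather than $n^2$) is needed for $\sum_j\mathbbm{E}\|\Delta_jf(X)\|^3$, which yields $n^{-(p-6)/(2p)}$ rather than $n^{-1/2}$. Jensen alone (replacing $\eta_8$ by $\eta_p^{8/p}$) does not produce these $n$-dependent factors; they come from bounding the expectation of a maximum. Once you insert the correct degree bound and the union bound, the bookkeeping is indeed routine and matches the paper's short proof. (Your observation that the statement's $\alpha(d)$ should read $\alpha(m)$ is well taken.)
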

\begin{remark}{\rm 
	Our bounds for smooth test functions in (\ref{smooth_local}) and (\ref{smooth_local1}) are of exactly the same order as the bound in \cite[Theorem 2.5]{new_stein}. The convex-distance bound (\ref{convex_local}) features additional terms which are somewhat difficult to compare to \cite[Theorem 2.5]{new_stein}. It is, however, possible to draw such useful comparisons in the  context of the specific application covered by Theorem \ref{main2}. The smooth distance bound in (\ref{smooth_main2}) is again of the same order as the bound in \cite[Theorem 3.4]{new_stein}. Indeed, concentrating only on the dependence on $n$, the bound is of order 
	\begin{align}\label{smooth_order}
	\max(\eta_p^{3/p}  n^{3/p-1/2}, \eta_p^{2/p}  n^{4/p-1/2}).
	\end{align}
	In order to extend our result in Theorem \ref{main2} to the convex distance case, we require a stronger integrability assumption (we need $\eta_p$ to exist for some $p\geq 12$, while in our and \cite{new_stein} smooth distance bounds, it is enough to have it finite for some $p\geq8$). If we assume that $\eta_p$ is non-decreasing   with $n$ then our convex-distance bound (\ref{convex_main2}) is of the same order in $n$ as the smooth-distance bound (\ref{smooth_main2}) and the smooth-distance bound of  \cite[Theorem 3.4]{new_stein}. If $\eta_p$ decreases with $n$ then we get a bound of order $\max(\eta_p^{2/p}  n^{4/p-1/2}, \eta_p^{3/(2p)}  n^{3/p-1/2}, \eta_p^{5/(3p)}  n^{10/(3p)-1/2})$ and it might be different from (\ref{smooth_order}) (depending on how fast $\eta_p$ decreases). 
	
\smallskip	
	
Following the lines of \cite[Section 3.4]{new_stein}, one could use Theorem \ref{main2} in order to deduce quantitative multivariate CLTs for several classes of statistics --- like e.g. vertex degrees in random geometric graphs \cite[Chapter 4]{penrose} or dimensional estimators \cite{LB}. Details are omitted so as to keep the length of the paper within bounds.

	}

\end{remark}

\section{Proofs of abstract results}\label{proofs_main}
\subsection{Proof of Theorem \ref{main}}
\subsubsection{Introduction}
Fix a function $F\in C^2(\mathbbm{R}^d)$ and an $\epsilon\in\mathbbm{R}$. Let $\varphi_{\epsilon^2 I}$ be the density of a $d$-dimensional Gaussian vector with covariance matrix $\epsilon^2I$. Let $F_{\epsilon}=F\star\varphi_{\epsilon^2 I}$ and note that $F_{\epsilon}\in C^{\infty}(\mathbbm{R}^d)$ and $\sup_{x\in\mathbbm{R^d}}\left|F_{\epsilon}(x)-F(x)\right|\xrightarrow{\epsilon\to 0}0$. In order to upper-bound $\left|\mathbbm{E}F(W)-\mathbbm{E}F(N_{\Sigma})\right|$, we will first upper-bound $\left|\mathbbm{E}F_{\epsilon}(W)-\mathbbm{E}F_{\epsilon}(N_{\Sigma})\right|$. This will be achieved by an application of \cite[Lemma 1]{meckes09}, which implies that there exists a function $\rho_{\epsilon}\in C^{\infty}(\mathbbm{R}^d)$ solving the corresponding Stein equation for $N_{\Sigma}$ and test function $F_{\epsilon}$, i.e. satisfying
\begin{align}
F_{\epsilon}(y)-\mathbbm{E}F_{\epsilon}(N_{\Sigma})=\left<\nabla \rho_{\epsilon}(y),y\right>-\left<\text{Hess}(\rho_{\epsilon}(y)),\Sigma\right>_{H.S.},\quad\text{for all }y\in\mathbbm{R}^d.\label{stein_epsilon}
\end{align}
We shall also apply \cite[Lemma 2]{meckes09} stating that, for any non-negative definite $\Sigma$,
\begin{align}
&M_k(\rho_{\epsilon})\leq \frac{1}{k}M_k(F_{\epsilon}),\,\,\forall k\geq 1;\quad \text{and}\quad \tilde{M}_2(\rho_{\epsilon})\leq\frac{1}{2}\tilde{M}_2(F_{\epsilon})
\end{align}
and for positive-definite $\Sigma$,
\begin{align}
&M_1(\rho_{\epsilon})\leq M_0(F_{\epsilon})\|\Sigma^{-1/2}\|_{op}\sqrt{\frac{\pi}{2}};\quad \tilde{M}_2(\rho_{\epsilon})\leq M_1(F_{\epsilon})\|\Sigma^{-1/2}\|_{op}\sqrt{\frac{2}{\pi}};\quad M_3(\rho_{\epsilon})\leq M_2(F_{\epsilon})\|\Sigma^{-1/2}\|_{op}\frac{\sqrt{2\pi}}{4}.\label{der_bounds}
\end{align}
We now prove a lemma which will let us conclude the argument.
\subsubsection{Auxiliary lemma}
\begin{lemma}[c.f. Lemma 2.4 and Lemma 4.1 of \cite{new_stein}]\label{lemma2}
For any $\rho\in C^3(\mathbbm{R}^d)$, with bounded second and third derivatives, we have
$$|\mathbbm{E}\left<\nabla\rho(W),W\right>-\mathbbm{E}\left<\text{Hess}(\rho(W)),\Sigma\right>_{H.S.}|\leq\tilde{M}_2(\rho)\mathbbm{E}\left\|\mathbbm{E}[T|W]-\Sigma\right\|_{H.S}+\frac{M_3(\rho)}{4}\sum_{j=1}^n\mathbbm{E}\left\|\Delta_jf(X)\right\|^3.$$
\end{lemma}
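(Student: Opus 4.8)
The plan is to mimic the one-dimensional argument of \cite[Lemma 2.4]{new_stein}, but carried out coordinate-wise and then repackaged using the operator/Hilbert--Schmidt norms. First I would use a Taylor expansion of the gradient of $\rho$: writing $W=f(X)$ and recalling $\mathbbm{E}W=0$, I want to interpolate between $\mathbbm{E}\langle\nabla\rho(W),W\rangle$ and $\mathbbm{E}\langle\mathrm{Hess}(\rho(W)),\Sigma\rangle_{H.S.}$ by introducing, for each $j$, the resampled vector $X^j$ and the increment $\Delta_jf(X)$. The key algebraic identity is the discrete ``fundamental theorem of calculus'': $\langle\nabla\rho(W),W\rangle$ can be rewritten, via Lemma \ref{lemma1} applied to the pair of functions $x\mapsto\partial_k\rho(f(x))$ and $x\mapsto f_l(x)$ (coordinate by coordinate, summed over $k$), as a double sum over $A\subsetneq[n]$ and $j\notin A$ of terms of the form $k_{n,A}\,\mathbbm{E}[\Delta_j(\partial_k\rho\circ f)(X)\,\Delta_jf_l(X^A)]$. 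This is exactly where the matrix $T_A$ and hence $T$ enter.

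Next I would perform a first-order Taylor expansion of $\Delta_j(\partial_k\rho\circ f)(X)=\partial_k\rho(f(X))-\partial_k\rho(f(X^j))$ along the segment joining $f(X^j)$ to $f(X)$. The zeroth-order term of this expansion, after reassembling the sum over $A$, $j$, $k$, $l$ and using the definition of $T$, produces precisely $\mathbbm{E}\langle\mathrm{Hess}(\rho(W)),\mathbbm{E}[T\mid W]\rangle_{H.S.}$ (here one needs to be slightly careful: the Hessian is evaluated at $W=f(X)$, not at the interpolation point, which is why a second-order remainder appears). Comparing with the target $\mathbbm{E}\langle\mathrm{Hess}(\rho(W)),\Sigma\rangle_{H.S.}$ and using Cauchy--Schwarz for the Hilbert--Schmidt inner product, $|\langle\mathrm{Hess}(\rho(W)),\mathbbm{E}[T\mid W]-\Sigma\rangle_{H.S.}|\le\tilde{M}_2(\rho)\,\|\mathbbm{E}[T\mid W]-\Sigma\|_{H.S.}$, gives the first term on the right-hand side after taking expectations. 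The conditioning on $W$ (rather than $X$) is legitimate because $\mathrm{Hess}(\rho(W))$ is $\sigma(W)$-measurable, so one conditions first on $W$ inside the expectation.

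The remaining first- and second-order Taylor remainders must be bounded by the cubic term $\tfrac{M_3(\rho)}{4}\sum_j\mathbbm{E}\|\Delta_jf(X)\|^3$. Each remainder term, before summation over $A$, is controlled by $\tfrac12 M_3(\rho)\,\|\Delta_jf(X)\|\,\|\Delta_jf(X^A)\|\,\|\Delta_jf(X)\|$ or a similar product of three increment norms (two evaluated at $X$, one at $X^A$); one then sums over $j\notin A$ and over $A\subsetneq[n]$ with the weights $k_{n,A}$. The crucial combinatorial point — and this is the step I expect to require the most care — is that $\sum_{A\subsetneq[n]}k_{n,A}=\sum_{a=0}^{n-1}\binom{n}{a}\cdot\frac{1}{\binom{n}{a}(n-a)}=\sum_{a=0}^{n-1}\frac{1}{n-a}$ does \emph{not} telescope to something $O(1)$; instead one must exploit that $X^A$ has the same law as $X$ for the purpose of bounding $\mathbbm{E}[\|\Delta_jf(X)\|^2\|\Delta_jf(X^A)\|]$ and then apply Hölder/Cauchy--Schwarz so the sum over $j$ recombines into $\sum_j\mathbbm{E}\|\Delta_jf(X)\|^3$ with the weights producing the harmless constant $\tfrac14$ rather than $\tfrac12$. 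This mirrors the bookkeeping in \cite[Lemma 4.1]{new_stein}, so I would follow that computation closely, the only genuine novelty being that every scalar increment is replaced by a Euclidean norm of a vector increment and the second derivative is replaced by its Hilbert--Schmidt norm via Cauchy--Schwarz on $\mathbbm{R}^{d\times d}$.
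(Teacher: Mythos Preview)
Your approach is correct and essentially identical to the paper's proof: apply Lemma~\ref{lemma1} to $g=\partial_k\rho\circ f$ and $h=f_k$ coordinate by coordinate, Taylor-expand the first-order increment of the gradient, and split the result into a Hessian--$T$ pairing (handled via Cauchy--Schwarz for the Hilbert--Schmidt inner product after conditioning on $W$) plus a third-derivative remainder.

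The only place you over-worry is the combinatorics at the end. You compute $\sum_{A\subsetneq[n]} k_{n,A}=H_n$ and fear a logarithmic blow-up, but that is not the sum that appears: the remainder is a double sum $\sum_{A\subsetneq[n]} k_{n,A}\sum_{j\notin A}(\cdots)$. After applying H\"older (using $\Delta_jf(X^A)\stackrel{d}{=}\Delta_jf(X)$ for $j\notin A$) to replace $\mathbbm{E}[\|\Delta_jf(X)\|^2\|\Delta_jf(X^A)\|]$ by $\mathbbm{E}\|\Delta_jf(X)\|^3$, the summand no longer depends on $A$. Swapping the order of summation and using the elementary identity $\sum_{A\subsetneq[n]:\,j\notin A} k_{n,A}=\sum_{a=0}^{n-1}\binom{n-1}{a}\big/\big(\binom{n}{a}(n-a)\big)=\sum_{a=0}^{n-1}\tfrac{1}{n}=1$ for each fixed $j$ collapses everything to $\sum_{j=1}^n\mathbbm{E}\|\Delta_jf(X)\|^3$ directly. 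The constant $\tfrac14$ is simply $\tfrac12$ (the prefactor in Lemma~\ref{lemma1}) times $\tfrac12$ (the second-order Taylor remainder); no further trick is needed.
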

\begin{proof}
For each $A\subset[n]$ and $j\not\in A$, let
$$R_{A,j}:=\sum_{i=1}^d\Delta_j\left(\frac{\partial\rho}{\partial x_i}\circ f\right)(X)\cdot\Delta_jf_i(X^A)\in\mathbbm{R};\qquad\tilde{R}_{A,j}=\sum_{i,k=1}^d\text{Hess}_{i,k}(\rho)(W)\cdot\Delta_jf_k(X)\cdot\Delta_jf_i(X^A)\in\mathbbm{R}.$$
Applying Lemma \ref{lemma1} to $g=\frac{\partial\rho}{\partial x_i}\circ f$ and $h=f_i$ and recalling that $\mathbbm{E}W=0$, we obtain:
\begin{align}
\mathbbm{E}\left<\nabla\rho(W),W\right>
=&\frac{1}{2}\sum_{A\subsetneq [n]}\frac{1}{{n\choose |A|}(n-|A|)}\sum_{j\not\in A}\mathbbm{E}[R_{A,j}].\label{first_part}
\end{align}
Now, from the definition of $T$, we have
\begin{align}
\left<\text{Hess}(\rho(W)),T\right>_{H.S.}
&=\frac{1}{2}\sum_{A\subsetneq [n]}\frac{1}{{n\choose |A|}(n-|A|)}\sum_{j\not\in A}\tilde{R}_{A,j}.\label{second_part}
\end{align}
Moreover, using Taylor's theorem and H\"older's inequality,
\begin{align}
\mathbbm{E}\left|R_{A,j}-\tilde{R}_{A,j}\right|
=&\mathbbm{E}\left|D\rho(f(X))[\Delta_jf(X^A)]-D\rho(f(X^j))[\Delta_jf(X^A)]-D^2\rho(f(X))[\Delta_jf(X^A),\Delta_jf(X)]\right|\nonumber\\
\leq &\frac{M_3(\rho)}{2}\mathbbm{E}\left[\left\|\Delta_jf(X)\right\|^2\left\|\Delta_jf(X^A)\right\|\right]\leq \frac{M_3(\rho)}{2}\mathbbm{E}\left\|\Delta_jf(X)\right\|^3.\label{third_part}
\end{align}
Combining (\ref{first_part}), (\ref{second_part}) and (\ref{third_part}), we get
\begin{align}
\left|\mathbbm{E}\left<\nabla\rho(W),W\right>-\left<\text{Hess}(\rho)(W),T\right>_{H.S.}\right|
\leq& \frac{M_3(\rho)}{4}\sum_{A\subsetneq [n]}\frac{1}{{n\choose |A|}(n-|A|)}\sum_{j\not\in A}\mathbbm{E}\left\|\Delta_jf(X)\right\|^3=\frac{M_3(\rho)}{4}\sum_{j=1}^n\mathbbm{E}\left\|\Delta_jf(X)\right\|^3.\label{lemma2_1}
\end{align}
Moreover,
\begin{align}
\left|\mathbbm{E}\left<\text{Hess}\left(\rho(W)\right),T-\Sigma\right>_{H.S.}\right|=&\left|\mathbbm{E}\left<\text{Hess}\left(\rho(W)\right),\mathbbm{E}\left[T-\Sigma\,|W\right]\right>_{H.S.}\right|
\leq \tilde{M}_2(\rho)\mathbbm{E}\left\|\mathbbm{E}[T|W]-\Sigma\right\|_{H.S.}\label{lemma2_2}
\end{align}
and the result follows by (\ref{lemma2_1}) and (\ref{lemma2_2}).
\end{proof}
\subsubsection{Concluding argument in the proof of Theorem \ref{main}}
We use the well-known formula for the derivative of a convolution and Young's convolution inequality in a manner similar to that of the proof of \cite[Theorem 3]{meckes09}. Since $\|\varphi_{\epsilon^2I}\|_{L^1}=1$, we obtain that $M_k(F_\epsilon)\leq M_k(F)$, for all $k\geq 1$, and $\tilde{M}_2(F_\epsilon)\leq \tilde{M}_2(F)$. The bounds (\ref{non_neg_def_bound}), (\ref{pos_def_bound}) now follow by using (\ref{stein_epsilon})-(\ref{der_bounds}) and Lemma \ref{lemma2} and taking $\epsilon\to 0$.\qed

\subsection{Proof of Theorem \ref{main3}}
\subsubsection{Introduction}
Let $\varphi_{\Sigma}$ denote the density of $N_{\Sigma}$. Let $K$ be a measurable convex subset of $\mathbbm{R}^d$ and let $h=\mathbbm{1}_K$. Fix $t\in(0,1)$. We introduce the following smoothed version of $h$:
$$h_{t,\Sigma}(y):=\int_{\mathbbm{R}^d} h(\sqrt{t}z + \sqrt{1-t} y)\varphi_{\Sigma}(z)dz=\mathbbm{E}h(\sqrt{t}N_{\Sigma}+\sqrt{1-t}y),\quad y\in\mathbbm{R}^d.$$
It follows from \cite[Lemma 2.2]{schulte_yukich} that
\begin{align}\label{schulte_bound}
d_{convex}(W,N_{\Sigma})\leq\frac{4}{3}\sup_{h\in\mathcal{I}_d}\left|\mathbbm{E}h_{t,\Sigma}(W)-\mathbbm{E}h_{t,\Sigma}(N_{\Sigma})\right|+\frac{20}{\sqrt{2}}d\frac{\sqrt{t}}{1-t}.
\end{align}
In order to upper-bound (\ref{schulte_bound}), we will study the following function $f_{t,h,\Sigma}:\mathbbm{R}^d\to\mathbbm{R}$:
\begin{align}\label{f_def}
f_{t,h,\Sigma}(y):=\frac{1}{2}\int_t^1\frac{1}{1-s}\int_{\mathbbm{R}^d}\left(h\left(\sqrt{s} z + \sqrt{1-s} y\right)-h(z)\right)\varphi_{\Sigma}(z)dz ds,
\end{align}
which, as noted in \cite[page 12]{schulte_yukich} and proved in \cite[Lemma 1]{meckes09} and \cite[Lemma 3.3]{nourdin_peccati_reveillac}, satisfies:
\begin{align}\label{stein_equation}
h_{t,\Sigma}(y)-\mathbbm{E}h_{t,\Sigma}(N_{\Sigma})=\left<\nabla f_{t,h,\Sigma}(y),y\right>-\left<\text{Hess}(f_{t,h,\Sigma})(y),\Sigma\right>_{H.S.},
\end{align}
for all $y\in\mathbbm{R}^d$, i.e. solves the corresponding Stein equation for $N_{\Sigma}$.

Changing slightly the argument in the proof of Lemma \ref{lemma2}, we note that
\begin{align*}
\left|\mathbbm{E}\left[\left<\text{Hess}(f_{t,h,\Sigma})(W),T-\Sigma\right>_{H.S}\right]\right|=&\left|\mathbbm{E}\left[\left<\text{Hess}(f_{t,h,\Sigma})(W),\mathbbm{E}[T-\Sigma |W]\right>_{H.S}\right]\right|\\
\leq&\sqrt{\mathbbm{E}\left\|\text{Hess}(f_{t,h,\Sigma})(W)\right\|_{H.S.}^2}\sqrt{\mathbbm{E}\left\|\mathbbm{E}[T-\Sigma |W]\right\|_{H.S.}^2}.
\end{align*}
By \cite[Proposition 2.3]{schulte_yukich},
$$\sup_{h\in\mathcal{I}_d} \mathbbm{E}\left\|\text{Hess}(f_{t,h,\Sigma})(W)\right\|_{H.S.}^2\leq \|\Sigma^{-1}\|_{op}^2\left(d^2(\log t)^2 d_{convex}(W,N_{\Sigma})+530 d^{17/6}\right),$$
for all $t\in(0,1)$  and therefore:
\begin{align}\label{j1}
\left|\mathbbm{E}\left<\text{Hess}(f_{t,h,\Sigma})(W),T-\Sigma\right>_{H.S}\right|\leq \|\Sigma^{-1}\|_{op}d\sqrt{(\log t)^2 d_{convex}(W,N_{\Sigma})+530 d^{5/6}}\sqrt{\mathbbm{E}\left\|\mathbbm{E}[T-\Sigma |W]\right\|_{H.S.}^2}.
\end{align}
An upper bound on (\ref{schulte_bound}) will be obtained by using (\ref{stein_equation})  and combining (\ref{j1}) with an upper-bound on\\ $\left|\mathbbm{E}\left<\nabla f_{t,h,\Sigma}(W),W\right>-\left<\text{Hess}(f_{t,h,\Sigma})(W),T\right>_{H.S.}\right|$, which we will work out in the subsequent steps of the proof.
To this end, we shall follow an argument similar to that of the proof of \cite[Theorem 1.2]{schulte_yukich}. 
\subsubsection{Step 1}
As before, in the proof of Lemma \ref{lemma2}, for any $A\in[n]$ such that $j\not\in A$, let
$$R_{A,j}:=\sum_{i=1}^d\Delta_j\left(\frac{\partial f_{t,h,\Sigma}}{\partial x_i}\circ f\right)(X)\cdot \Delta_jf_i(X^A)\in\mathbbm{R};\quad\tilde{R}_{A,j}:=\sum_{i,k=1}^d\text{Hess}_{i,k}(f_{t,h,\Sigma})(W)\cdot\Delta_jf_k(X)\cdot \Delta_jf_i(X^A)\in\mathbbm{R}.$$
For $A\subsetneq [n]$, let $k_{n,A}=\frac{1}{{n\choose|A|}(n-|A|)}$. It follows that
\begin{align}
\left|\mathbbm{E}\left<\nabla f_{t,h,\Sigma}(W),W\right>-\left<\text{Hess}(f_{t,h,\Sigma}),T\right>_{H.S.}\right|=\frac{1}{2}\left|\sum_{A\subsetneq[n]}k_{n,A}\sum_{j\not\in A}\mathbbm{E}\left[R_{A,j}-\tilde{R}_{A,j}\right]\right|.\label{remaining_bound}
\end{align}
Now, using repeatedly Taylor's theorem with integral remainder (in identities $(\ast)$ below), we obtain, for $j\not\in A$, that
\begin{align}
&\mathbbm{E}\left[\tilde{R}_{A,j}-R_{A,j}\right]\notag\\
=&\mathbbm{E}\left[\left(Df_{t,h,\Sigma}\left(f(X^j)\right)-Df_{t,h,\Sigma}\left(f(X)\right)\right)[\Delta_jf(X^A)]-D^2f_{t,h,\Sigma}\left(f(X)\right)[\Delta_jf(X^A),-\Delta_jf(X)]\right]\notag\\
\stackrel{(\ast)}=&\mathbbm{E}\int_0^1D^2f_{t,h,\Sigma}\left(f(X)-u\Delta_jf(X)\right)[\Delta_jf(X^A),-\Delta_jf(X)]-D^2f_{t,h,\Sigma}\left(f(X)\right)[\Delta_jf(X^A),-\Delta_jf(X)]]du\notag\\
=&\mathbbm{E}\int_0^1\left(D^2f_{t,h,\Sigma}\left(f(X)-u\Delta_jf(X)\right)-D^2f_{t,h,\Sigma}\left(f(X)\right)\right)[\Delta_jf(X^A),-\Delta_jf(X)]du\notag\\
\stackrel{(\ast)}=&\mathbbm{E}\int_0^1\int_0^1D^3f_{t,h,\Sigma}\left(f(X)-uv\Delta_jf(X)\right)[\Delta_jf(X^A),\Delta_jf(X),u\Delta_jf(X)]dvdu\notag\\
= &\mathbbm{E}\int_0^1\int_0^1D^3f_{t,h,\Sigma}\left(f(X)-v\Delta_jf(X)\right)[\Delta_jf(X^A),\Delta_jf(X),u\Delta_jf(X)]dvdu\notag\\
&+\mathbbm{E}\int_0^1\int_0^1\left(D^3f_{t,h,\Sigma}\left(f(X)-uv\Delta_jf(X)\right)-D^3f_{t,h,\Sigma}\left(f(X)-v\Delta_jf(X)\right)\right)[\Delta_jf(X^A),\Delta_jf(X),u\Delta_jf(X)]dvdu\notag\\
\stackrel{(\ast)}=&\mathbbm{E}\left(D^2f_{t,h,\Sigma}\left(f(X)\right)-D^2f_{t,h,\Sigma}\left(f(X^j)\right)\right)[\Delta_jf(X^A),\Delta_jf(X)]\notag\\
&+\mathbbm{E}\int_0^1\int_0^1\left(D^3f_{t,h,\Sigma}\left(f(X)-uv\Delta_jf(X)\right)-D^3f_{t,h,\Sigma}\left(f(X)-v\Delta_jf(X)\right)\right)[\Delta_jf(X^A),\Delta_jf(X),u\Delta_jf(X)]dvdu\notag\\
=&\mathbbm{E}\int_0^1\int_0^1\left(D^3f_{t,h,\Sigma}\left(f(X)-uv\Delta_jf(X)\right)-D^3f_{t,h,\Sigma}\left(f(X)-v\Delta_jf(X)\right)\right)[\Delta_jf(X^A),\Delta_jf(X),u\Delta_jf(X)]dvdu\notag\\
=:&J(j,A),\label{def_j}
\end{align}
The second to last identity holds because, for $j\not\in A$, $(X,X^j,X^A,X^{A\cup\{j\}})\stackrel{\mathcal{D}}=(X^j,X,X^{A\cup\{j\}},X^A)$ and so
\begin{align*}
&\mathbbm{E}\left(D^2f_{t,h,\Sigma}\left(f(X)\right)-D^2f_{t,h,\Sigma}\left(f(X^j)\right)\right)[\Delta_jf(X^A),\Delta_jf(X)]\\
=&\mathbbm{E} D^2f_{t,h,\Sigma}\left(f(X)\right)\left[f(X^A)-f(X^{A\cup\{j\}}),f(X)-f(X^j)\right]-\mathbbm{E} D^2f_{t,h,\Sigma}\left(f(X^j)\right)\left[f(X^{A\cup\{j\}})-f(X^A),f(X^j)-f(X)\right]\\
=&0.
\end{align*}

Now, using (\ref{f_def}) and \cite[(2.2)]{schulte_yukich}, we obtain that,  for $J(j,A)$, defined in (\ref{def_j}),
\begin{align*}
J(j,A)
= &-\frac{1}{2}\sum_{k,l,m=1}^d\mathbbm{E}\int_0^1\int_0^1\int_t^1\frac{\sqrt{1-s}}{s^{3/2}}\int_{\mathbbm{R}^d}\bigg[h\left(\sqrt{s}z+\sqrt{1-s}\left(f(X)-uv\Delta_jf(X)\right)\right)\\
&-h\left(\sqrt{s}z+\sqrt{1-s}\left(f(X)-v\Delta_jf(X)\right)\right)\bigg]\frac{\partial^3\varphi_{\Sigma}}{\partial y_k\partial y_l\partial y_m}(z)u\left(\Delta_jf(X^A)\right)_k\left(\Delta_jf(X)\right)_l\left(\Delta_jf(X)\right)_mdz\,ds\,dv\,du.
\end{align*}
Using the abbreviation
\begin{align*}
U_{k,l,m}:=&\underset{s,u\in[0,1]}{\sup_{z\in\mathbbm{R}^d}}\mathbbm{E}\Bigg\{\sum_{A\subsetneq [n]}k_{n,A}\sum_{j\not\in A}\int_0^1\Bigg|\bigg[h\left(\sqrt{s}z+\sqrt{1-s}\left(f(X)-uv\Delta_jf(X)\right)\right)\\
&-h\left(\sqrt{s}z+\sqrt{1-s}\left(f(X)-v\Delta_jf(X)\right)\right)\bigg]\left(\Delta_jf(X^A)\right)_k\left(\Delta_jf(X)\right)_l\left(\Delta_jf(X)\right)_m\vphantom{\frac{1}{{n\choose a}}}\Bigg|dv\Bigg\},
\end{align*}
for $k,l,m\in[d]$, and the Cauchy-Schwarz inequality, we obtain
\begin{align}
\frac{1}{2}\left|\sum_{A\subsetneq [n]}k_{n,A}\sum_{j\not\in A}J(j,A)\right|\leq& \frac{1}{2\sqrt{t}}\sum_{k,l,m=1}^d\int_{\mathbbm{R}^d}\left|\frac{\partial^3\varphi_{\Sigma}}{\partial y_k\partial y_l\partial y_m}(z)\right|dz U_{k,l,m}\notag\\
\leq &\frac{1}{2\sqrt{t}}\int_{\mathbbm{R}^d}\left(\sum_{k,l,m=1}^d \left(\frac{\partial^3\varphi_{\Sigma}}{\partial y_k\partial y_l\partial y_m}(z)\right)^2\right)^{1/2}dz\left(\sum_{k,l,m=1}^d \left(U_{k,l,m}\right)^2\right)^{1/2}.\label{j_bound}
\end{align}
By \cite[page 22 and (3.9)]{schulte_yukich},
\begin{align}\label{gauss_der_bound}
\int_{\mathbbm{R}^d}\left(\sum_{k,l,m=1}^d \left(\frac{\partial^3\varphi_{\Sigma}}{\partial y_k\partial y_l\partial y_m}(z)\right)^2\right)^{1/2}dz\leq \sqrt{6}d^{3/2}\left\|\Sigma^{-1}\right\|^{3/2}_{op}.\end{align}
Therefore, by (\ref{j_bound}),  in order to upper-bound (\ref{remaining_bound}), it remains to upper-bound $U_{k,l,m}$, for $k,l,m\in[d]$.
\subsubsection{Step 2 - an upper bound on $U_{k,l,m}$}
For $j$ such that $\Delta_jf(X)\neq 0$, we define $r\left(\Delta_jf(X)\right):=\frac{1}{\|\Delta_jf(X)\|}\Delta_jf(X)$. We also let $w=v\|\Delta_jf(X)\|$. We obtain
\begin{align}
U_{k,l,m}
\leq& \underset{s,u\in[0,1]}{\sup_{z\in\mathbbm{R}^d}}\mathbbm{E}\Bigg\{\sum_{A\subsetneq [n]}k_{n,A}\sum_{j\not\in A}\int_0^{\|\Delta_jf(X)\|}\Bigg|h\left(\sqrt{s}z+\sqrt{1-s}\left(f(X)-uwr\left(\Delta_jf(X)\right)\right)\right)\notag\\
&-h\left(\sqrt{s}z+\sqrt{1-s}\left(f(X)-wr\left(\Delta_jf(X)\right)\right)\right)\Bigg|dw\cdot\mathbbm{1}_{\left[\|\Delta_jf(X)\|\leq 1\right]} \Bigg|\left(\Delta_jf(X^A)\right)_k\frac{\left(\Delta_jf(X)\right)_l}{\|\Delta_jf(X)\|}\left(\Delta_jf(X)\right)_m\vphantom{\sum_{A}\frac{{n\choose k}}{{n\choose k}}}\Bigg|\Bigg\}\notag\\
&+\underset{s,u\in[0,1]}{\sup_{z\in\mathbbm{R}^d}}\mathbbm{E}\Bigg\{\sum_{A\subsetneq [n]}k_{n,A}\sum_{j\not\in A}\int_0^{1}\Bigg|\bigg[h\left(\sqrt{s}z+\sqrt{1-s}\left(f(X)-uv\Delta_jf(X)\right)\right)\notag\\
&-h\left(\sqrt{s}z+\sqrt{1-s}\left(f(X)-v\Delta_jf(X)\right)\right)\bigg]\mathbbm{1}_{\left[\|\Delta_jf(X)\|> 1\right]} \left(\Delta_jf(X^A)\right)_k\left(\Delta_jf(X)\right)_l\left(\Delta_jf(X)\right)_m\vphantom{\sum_{A}\frac{{n\choose k}}{{n\choose k}}}\Bigg|dv\Bigg\}\notag\\
&=: U_{k,l,m}^{(1)}+U_{k,l,m}^{(2)}.\label{u_bound}
\end{align}
Recall that $h(\cdot)=\mathbbm{1}_{[\cdot\in K]}$ for a measurable convex set $K\subseteq \mathbbm{R}^d$. It follows that
\begin{align*}
U_{k,l,m}^{(2)}\leq &\sum_{A\subsetneq [n]}k_{n,A}\sum_{j\not\in A}\mathbbm{E}\bigg\{\mathbbm{1}_{\left[\|\Delta_jf(X)\|> 1\right]}\left|\left(\Delta_jf(X^A)\right)_k\left(\Delta_jf(X)\right)_l\left(\Delta_jf(X)\right)_m\right|\bigg\}\\
\leq &\sum_{A\subsetneq [n]}k_{n,A}\mathbbm{E}\bigg\{\|\Delta_jf(X)\|\left|\left(\Delta_jf(X^A)\right)_k\left(\Delta_jf(X)\right)_l\left(\Delta_jf(X)\right)_m\right|\bigg\}
\end{align*}
and so
\begin{align}
\sqrt{\sum_{k,l,m=1}^d \left(U_{k,l,m}^{(2)}\right)^2}
\leq &\sum_{k,l,m=1}^d\left|U_{k,l,m}^{(2)}\right|\notag\\
\leq &\sum_{A\subsetneq [n]}k_{n,A}\sum_{j\not\in A}\sum_{k,l,m=1}^d\mathbbm{E}\left[\|\Delta_jf(X)\|\left|\left(\Delta_jf(X^A)\right)_k\left(\Delta_jf(X)\right)_l\left(\Delta_jf(X)\right)_m\right|\right]\notag\\
\leq&d^{3/2}\sum_{A\subsetneq [n]}k_{n,A}\sum_{j\not\in A}\mathbbm{E}\left[\left\|\Delta_jf(X)\right\|^3\left\|\Delta_jf(X^A)\right\|\right]\notag\\
\leq &d^{3/2}\sum_{j=1}^n\mathbbm{E}\|\Delta_jf(X)\|^4=d^{3/2}\gamma_2^2.\label{u2_bound}
\end{align}
Now, we need to bound $\sum_{k,l,m=1}^d\left(U_{k,l,m}^{(1)}\right)^2$. We define $K_{s,z}:=\frac{1}{\sqrt{1-s}}\left(K-\sqrt{s}z\right)$. Then, for $v,u\in[0,1]$ and $r$ and $w$ defined at the top of this subsection,
\begin{align*}
&\left|h\left(\sqrt{s}z+\sqrt{1-s}\left(f(X)-uwr\left(\Delta_jf(X)\right)\right)\right)-h\left(\sqrt{s}z+\sqrt{1-s}\left(f(X)-wr\left(\Delta_jf(X)\right)\right)\right)\right|\\
=&\left|\mathbbm{1}_{\left[f(X)-uwr\left(\Delta_jf(X)\right)\in K_{s,z}\right]}-\mathbbm{1}_{\left[f(X)-wr\left(\Delta_jf(X)\right)\in K_{s,z}\right]}\right|\\
\leq &\mathbbm{1}_{\left[\text{dist}(f(X),\partial K_{s,z})\leq w\right]}.
\end{align*}
where $\partial K_{s,z}$ is the boundary of $K_{s,z}$ and $\text{dist}(f(X),\partial K_{s,z}):=\underset{{y\in\partial K_{s,z}}}{\inf}\|y-f(X)\|$. Thus, we have that
\begin{align}
U_{k,l,m}^{(1)}
\leq& \sup_{z\in\mathbbm{R}^d,s\in[0,1]}\mathbbm{E}\Bigg\{\sum_{A\subsetneq [n]}k_{n,A}\sum_{j\not\in A}\int_0^1\mathbbm{1}_{\left[\text{dist}(f(X),\partial K_{s,z})\leq w\right]}\mathbbm{1}_{\left[w\leq \|\Delta_jf(X)\|\right]}\left|\left(\Delta_jf(X^A)\right)_l\left(\Delta_jf(X)\right)_m\right|dw\Bigg\}\notag\\
= &\underset{s\in[0,1]}{\sup_{z\in\mathbbm{R}^d}}\Bigg\{\sum_{A\subsetneq [n]}k_{n,A}\sum_{j\not\in A}\int_0^1\mathbbm{P}\left[\text{dist}(f(X),\partial K_{s,z})\leq w\right]\mathbbm{E}\left[\mathbbm{1}_{\left[w\leq \|\Delta_jf(X)\|\right]}\left|\left(\Delta_jf(X^A)\right)_l\left(\Delta_jf(X)\right)_m\right|\right]dw\Bigg\}\notag\\
&+ \sup_{z\in\mathbbm{R}^d,s\in[0,1]}\int_0^1\mathbbm{E}\Bigg\{\mathbbm{1}_{\left[\text{dist}(f(X),\partial K_{s,z})\leq w\right]}\sum_{A\subsetneq [n]}k_{n,A}\sum_{j\not\in A}\bigg(\mathbbm{1}_{\left[w\leq \|\Delta_jf(X)\|\right]}\left|\left(\Delta_jf(X^A)\right)_l\left(\Delta_jf(X)\right)_m\right|\notag\\
&\hspace{8cm}-\mathbbm{E}\left[\mathbbm{1}_{\left[w\leq \|\Delta_jf(X)\|\right]}\left|\left(\Delta_jf(X^A)\right)_l\left(\Delta_jf(X)\right)_m\right|\right]\bigg)\Bigg\} dw\notag\\
=:&R_{l,m}^{(1)}+R_{l,m}^{(2)}.\label{u1_bound}
\end{align}
\subsubsection{Step 3 - upper bounds on $R_{l,m}^{(1)}$ and $R_{l,m}^{(2)}$}
Now, we apply \cite[Lemma 3.3]{schulte_yukich} to obtain
\begin{align}
&\hspace{-2cm}\sqrt{d\sum_{l,m=1}^d\left(R_{l,m}^{(1)}\right)^2}
\leq\sqrt{d}\sum_{l,m=1}^d\left|R_{l,m}^{(1)}\right|\notag\\
\leq& 2d\left\|\Sigma^{-{1/2}}\right\|_{op}\sum_{l,m=1}^d\sum_{A\subsetneq [n]}k_{n,A}\sum_{j\not\in A}\mathbbm{E}\left\lbrace\int_0^1w\mathbbm{1}_{\left[w\leq \|\Delta_jf(X)\|\right]}dw\left|\left(\Delta_jf(X^A)\right)_l\left(\Delta_jf(X)\right)_m\right|\right\rbrace\notag\\
&+2d_{convex}(W,N_{\Sigma})\sqrt{d}\sum_{l,m=1}^d\sum_{A\subsetneq [n]}k_{n,A}\sum_{j\not\in A}\mathbbm{E}\left\lbrace\int_0^1\mathbbm{1}_{\left[w\leq \|\Delta_jf(X)\|\right]}dw\left|\left(\Delta_jf(X^A)\right)_l\left(\Delta_jf(X)\right)_m\right|\right\rbrace\notag\\
\leq&d\left\|\Sigma^{-1/2}\right\|_{op}\sum_{l,m=1}^d\sum_{A\subsetneq [n]}k_{n,A}\sum_{j\not\in A}\mathbbm{E}\left\lbrace\|\Delta_jf(X)\|^2\left|\left(\Delta_jf(X^A)\right)_l\left(\Delta_jf(X)\right)_m\right|\right\rbrace\notag\\
&+2d_{convex}(W,N_{\Sigma})\sqrt{d}\sum_{l,m=1}^d\sum_{A\subsetneq [n]}k_{n,A}\sum_{j\not\in A}\mathbbm{E}\left\lbrace\|\Delta_jf(X)\|\,\left|\left(\Delta_jf(X^A)\right)_l\left(\Delta_jf(X)\right)_m\right|\right\rbrace\notag\\
\leq&d^2\|\Sigma^{-1/2}\|_{op}\sum_{A\subsetneq [n]}k_{n,A}\sum_{j\not\in A}\mathbbm{E}\left[\|\Delta_jf(X^A)\|\|\Delta_jf(X)\|^3\right]\notag\\
&+\sum_{A\subsetneq [n]}k_{n,A}\sum_{j\not\in A}2d_{convex}(W,N_{\Sigma})d^{3/2}\mathbbm{E}\left[\|\Delta_jf(X^A)\|\|\Delta_jf(X)\|^2\right]\notag\\
\leq &d^2\|\Sigma^{-1/2}\|_{op}\sum_{j=1}^n\mathbbm{E}\|\Delta_jf(X)\|^4+2d_{convex}(W,N_{\Sigma})d^{3/2}\sum_{j=1}^n\mathbbm{E}\|\Delta_jf(X)\|^3\notag\\
=&d^2\|\Sigma^{-1/2}\|_{op}\gamma_2^2+2d_{convex}(W,N_{\Sigma})d^{3/2}\gamma_1.\label{r1_bound}
\end{align}

For $R^{(2)}_{k,l}$ we apply the Cauchy-Schwarz inequality and \cite[Lemma 3.3]{schulte_yukich} to obtain
\begin{align}
R_{l,m}^{(2)}
\leq &\sup_{z\in\mathbbm{R}^d,s\in[0,1]}\int_0^1\mathbbm{P}\left(\text{dist}(f(X),\partial K_{s,z})\leq w\right)^{1/2}\notag\\
&\cdot\Bigg(\mathbbm{E}\Bigg|\sum_{A\subsetneq [n]}k_{n,A}\sum_{j\not\in A} \mathbbm{E}\left[\mathbbm{1}_{\left[w\leq \|\Delta_jf(X)\|\right]}\left|\left(\Delta_jf(X^A)\right)_l\left(\Delta_jf(X)\right)_m\right|\,\bigg|\,X\right]\notag\\
&\hphantom{...............}-\sum_{A\subsetneq [n]}k_{n,A}\sum_{j\not\in A}\mathbbm{E}\left\lbrace\mathbbm{1}_{\left[w\leq \|\Delta_jf(X)\|\right]}\left|\left(\Delta_jf(X^A)\right)_l\left(\Delta_jf(X)\right)_m\right|\right\rbrace\Bigg|^2\Bigg)^{1/2}dw\notag\\
\leq&\int_0^1\left(2d_{convex}(W,N_{\Sigma})+2\sqrt{d}\|\Sigma^{-1/2}\|_{op}w\right)^{1/2}\notag\\
&\cdot\left(\text{Var}\left\lbrace\sum_{A\subsetneq [n]}k_{n,A}\sum_{j\not\in A}\mathbbm{E}\left[\mathbbm{1}_{\left[w\leq \|\Delta_jf(X)\|\right]}\left|\left(\Delta_jf(X^A)\right)_l\left(\Delta_jf(X)\right)_m\right|\,\bigg|\,X\right]\right\rbrace\right)^{1/2}dw\notag\\
\leq &\left(2d_{convex}(W,N_{\Sigma})V_{l,m}^{(1)}+2\sqrt{d}\left\|\Sigma^{-1/2}\right\|_{op}V_{l,m}^{(2)}\right)^{1/2},\label{r2_bound}
\end{align}
by the Cauchy-Schwarz ineuqality, where
\begin{align}
&V_{l,m}^{(1)}:=\int_0^1\text{Var}\left\lbrace\sum_{A\subsetneq [n]}k_{n,A}\sum_{j\not\in A}\mathbbm{E}\left[\mathbbm{1}_{\left[w\leq \|\Delta_jf(X)\|\right]}\left|\left(\Delta_jf(X^A)\right)_l\left(\Delta_jf(X)\right)_m\right|\,\bigg|\,X\right]\right\rbrace dw\notag\\
&V_{l,m}^{(2)}:=\int_0^1w\text{Var}\left\lbrace\sum_{A\subsetneq [n]}k_{n,A}\sum_{j\not\in A}\mathbbm{E}\left[\mathbbm{1}_{\left[w\leq \|\Delta_jf(X)\|\right]}\left|\left(\Delta_jf(X^A)\right)_l\left(\Delta_jf(X)\right)_m\right|\,\bigg|\,X\right]\right\rbrace dw.\label{v_def}
\end{align}
The existence of the variances in the definition of $V_{l,m}^{(1)}$ and $V_{l,m}^{(2)}$ is proved below, in (\ref{variance_exists}).
\subsubsection{Step 4 - upper bounds on $V_{l,m}^{(1)}$ and $V_{l,m}^{(2)}$}
We first compute the difference operators. 
Note that, for all $i,j\in[n]$,
\begin{align*}
\left|\tilde{\Delta}_i\mathbbm{1}_{\left[w\leq\|\Delta_jf(X)\|\right]}\right|=&\left|\mathbbm{1}_{\left[w\leq\|\Delta_jf(X)\|-\tilde{\Delta}_i\|\Delta_jf(X)\|\right]}-\mathbbm{1}_{\left[w\leq\|\Delta_jf(X)\|\right]}\right|\leq \mathbbm{1}_{\left[w\leq\|\Delta_jf(X)\|+\left|\,\tilde{\Delta}_i\|\Delta_jf(X)\|\,\right|\right]}\quad\text{and}\\
\left|\,\tilde{\Delta}_i\|\Delta_jf(X)\|\,\right|=&\left|\,\|\Delta_jf(X)-\tilde{\Delta}_i\Delta_jf(X)\|-\|\Delta_jf(X)\|\,\right|\leq \|\tilde{\Delta}_i\Delta_jf(X)\|.
\end{align*}
Hence
\begin{align*}
\left|\tilde{\Delta}_i\mathbbm{1}_{\left[w\leq\|\Delta_jf(X)\|\right]}\right|\leq \mathbbm{1}_{\left[w\leq\|\Delta_jf(X)\|+\|\tilde{\Delta}_i\Delta_jf(X)\|\right]}\mathbbm{1}_{\left[\tilde{\Delta}_i\Delta_jf(X)\neq 0\right]}.
\end{align*}
Therefore, for all $i\in[n]$,
\begin{align}
&\left(\sum_{A\subsetneq [n]}k_{n,A}\sum_{j\not\in A}\tilde{\Delta}_i\left[\mathbbm{1}_{\left[w\leq \|\Delta_jf(X)\|\right]}\left|\left(\Delta_jf(X^A)\right)_l\left(\Delta_jf(X)\right)_m\right|\right]\right)^2\notag\\
\leq &\Bigg\{\sum_{A\subsetneq [n]}k_{n,A}\sum_{j\not\in A}\bigg[\mathbbm{1}_{\left[w\leq\|\Delta_jf(X)\|+\|\tilde{\Delta}_i\Delta_jf(X)\|\right]}\mathbbm{1}_{\left[\tilde{\Delta}_i\Delta_jf(X)\neq 0\right]}\Big(\left|\left(\Delta_jf(X^A)\right)_l\left(\Delta_jf(X)\right)_m\right|\notag\\
&\hspace{4cm}+\left|\tilde{\Delta}_i\left|\left(\Delta_jf(X^A)\right)_l\left(\Delta_jf(X)\right)_m\right|\right|\Big)+\mathbbm{1}_{\left[w\leq\|\Delta_jf(X)\|\right]}\left|\tilde{\Delta}_i\left|\left(\Delta_jf(X^A)\right)_l\left(\Delta_jf(X)\right)_m\right|\right|\bigg]\Bigg\}^2\notag\\
\leq &3\sum_{A_1,A_2\subsetneq [n]}\hspace{-2mm}k_{n,A_1}k_{n,A_2}\hspace{-1mm}\sum_{j_1\not\in A_1}\sum_{j_2\not\in A_2}\Bigg[\mathbbm{1}_{\left[\tilde{\Delta}_i\Delta_{j_1}f(X)\neq 0,\tilde{\Delta}_i\Delta_{j_2}f(X)\neq 0\right]}\mathbbm{1}_{\left[w\leq\min\left\lbrace\|\Delta_{j_1}f(X)\|+\|\tilde{\Delta}_i\Delta_{j_1}f(X)\|,\|\Delta_{j_2}f(X)\|+\|\tilde{\Delta}_i\Delta_{j_2}f(X)\|\right\rbrace\right]}\notag\\
&\hspace{5cm}\cdot\bigg(\left|\left(\Delta_{j_1}f(X^A)\right)_l\left(\Delta_{j_1}f(X)\right)_m\right|\,\left|\left(\Delta_{j_2}f(X^A)\right)_l\left(\Delta_{j_2}f(X)\right)_m\right| \notag\\
&\hspace{7cm}+      \left|\tilde{\Delta}_i\left|\left(\Delta_{j_1}f(X^A)\right)_l\left(\Delta_{j_1}f(X)\right)_m\right|\right|\,\left|\tilde{\Delta}_i\left|\left(\Delta_{j_2}f(X^A)\right)_l\left(\Delta_{j_2}f(X)\right)_m\right|\right|\bigg)\Bigg]\notag\\
&+3\sum_{A_1,A_2\subsetneq [n]}k_{n,A_1}k_{n,A_2}\sum_{j_1\not\in A_1}\sum_{j_2\not\in A_2}\bigg[\mathbbm{1}_{\left[w\leq\min\left[\|\Delta_{j_1}f(X)\|,\|\Delta_{j_2}f(X)\|\right]\right]}\vphantom{\sum_i^i}\notag\\
&\hspace{7cm}\cdot\left|\tilde{\Delta}_i\left|\left(\Delta_{j_1}f(X^A)\right)_l\left(\Delta_{j_1}f(X)\right)_m\right|\right|\,\left|\tilde{\Delta}_i\left|\left(\Delta_{j_2}f(X^A)\right)_l\left(\Delta_{j_2}f(X)\right)_m\right|\right|\bigg].\label{ineq_v}
\end{align}

We now use the fact that, for any $g:\mathcal{X}^{2n}\to\mathbbm{R}$ and  $U=g(X,X')$, $\text{Var}(\mathbbm{E}(U|X))\leq \mathbbm{E}\left(\text{Var}(U|X')\right)$ (see \cite[Lemma 4.4]{new_stein}). Together with the Efron-Stein inequality \cite{steele}, this implies that, for such $U$,
\begin{align}\label{general_fact}
\text{Var}(\mathbbm{E}(U|X))\leq \mathbbm{E}\left(\text{Var}(U|X')\right)\leq\mathbbm{E}\Bigg\{\frac{1}{2}\sum_{i=1}^{n}\mathbbm{E}\left[\left(\tilde{\Delta}_i U\right)^2\,\bigg|\,X'\right]\Bigg\}=\frac{1}{2}\sum_{i=1}^{n}\mathbbm{E}\left[\left(\tilde{\Delta}_i U\right)^2\right].
\end{align}
The general fact (\ref{general_fact}) and the inequality (\ref{ineq_v}) together lead to the following estimates:
\begin{align*}
V_{l,m}^{(1)}\leq &\frac{3}{2}\sum_{i=1}^n\sum_{A_1,A_2\subsetneq [n]}k_{n,A_1}k_{n,A_2}\sum_{j_1\not\in A_1}\sum_{j_2\not\in A_2}\mathbbm{E}\Bigg\{\mathbbm{1}_{\left[\tilde{\Delta}_i\Delta_{j_1}f(X)\neq 0,\tilde{\Delta}_i\Delta_{j_2}f(X)\neq 0\right]}\\
&\cdot\min\left[\|\Delta_{j_1}f(X)\|+\|\tilde{\Delta}_i\Delta_{j_1}f(X)\|,\|\Delta_{j_2}f(X)\|+\|\tilde{\Delta}_i\Delta_{j_2}f(X)\|\right]\bigg(\left|\left(\Delta_{j_1}f(X^A)\right)_l\left(\Delta_{j_1}f(X)\right)_m\right|\\
&\cdot\left|\left(\Delta_{j_2}f(X^A)\right)_l\left(\Delta_{j_2}f(X)\right)_m\right| +      \left|\tilde{\Delta}_i\left|\left(\Delta_{j_1}f(X^A)\right)_l\left(\Delta_{j_1}f(X)\right)_m\right|\right|\,\left|\tilde{\Delta}_i\left|\left(\Delta_{j_2}f(X^A)\right)_l\left(\Delta_{j_2}f(X)\right)_m\right|\right|\bigg)\Bigg\}\\
&+\frac{3}{2}\sum_{i=1}^n\sum_{A_1,A_2\subsetneq [n]}k_{n,A_1}k_{n,A_2}\sum_{j_1\not\in A_1}\sum_{j_2\not\in A_2}\mathbbm{E}\bigg\{\min\left[\|\Delta_{j_1}f(X)\|,\|\Delta_{j_2}f(X)\|\right]\\
&\hspace{5cm}\cdot\left|\tilde{\Delta}_i\left|\left(\Delta_{j_1}f(X^A)\right)_l\left(\Delta_{j_1}f(X)\right)_m\right|\right|\,\left|\tilde{\Delta}_i\left|\left(\Delta_{j_2}f(X^A)\right)_l\left(\Delta_{j_2}f(X)\right)_m\right|\right|\bigg\};\\
V_{l,m}^{(2)}\leq&\frac{3}{2}\sum_{i=1}^n\sum_{A_1,A_2\subsetneq [n]}k_{n,A_1}k_{n,A_2}\sum_{j_1\not\in A_1}\sum_{j_2\not\in A_2}\mathbbm{E}\Bigg\{\mathbbm{1}_{\left[\tilde{\Delta}_i\Delta_{j_1}f(X)\neq 0,\tilde{\Delta}_i\Delta_{j_2}f(X)\neq 0\right]}\\
&\cdot\min\left[\|\Delta_{j_1}f(X)\|^2+\|\tilde{\Delta}_i\Delta_{j_1}f(X)\|^2,\|\Delta_{j_2}f(X)\|^2+\|\tilde{\Delta}_i\Delta_{j_2}f(X)\|^2\right]\bigg(\left|\left(\Delta_{j_1}f(X^A)\right)_l\left(\Delta_{j_1}f(X)\right)_m\right|\\
&\cdot\left|\left(\Delta_{j_2}f(X^A)\right)_l\left(\Delta_{j_2}f(X)\right)_m\right| +      \left|\tilde{\Delta}_i\left|\left(\Delta_{j_1}f(X^A)\right)_l\left(\Delta_{j_1}f(X)\right)_m\right|\right|\,\left|\tilde{\Delta}_i\left|\left(\Delta_{j_2}f(X^A)\right)_l\left(\Delta_{j_2}f(X)\right)_m\right|\right|\bigg)\Bigg\}\\
&+\frac{3}{4}\sum_{i=1}^n\sum_{A_1,A_2\subsetneq [n]}k_{n,A_1}k_{n,A_2}\sum_{j_1\not\in A_1}\sum_{j_2\not\in A_2}\mathbbm{E}\bigg\{\min\left[\|\Delta_{j_1}f(X)\|^2,\|\Delta_{j_2}f(X)\|^2\right]\vphantom{\sum_i^i}\\
&\hspace{5cm} \cdot\left|\tilde{\Delta}_i\left|\left(\Delta_{j_1}f(X^A)\right)_l\left(\Delta_{j_1}f(X)\right)_m\right|\right|\,\left|\tilde{\Delta}_i\left|\left(\Delta_{j_2}f(X^A)\right)_l\left(\Delta_{j_2}f(X)\right)_m\right|\right|\bigg\}.
\end{align*}

This, in turn, implies that, for $p=1,2$
\begin{align}
V_{l,m}^{(p)}
\leq &\frac{3}{2}\sum_{i=1}^n\mathbbm{E}\left(\sum_{A\subsetneq [n]}k_{n,A}\sum_{j\not\in A}\mathbbm{1}_{[\tilde{\Delta}_i\Delta_jf(X)\neq 0]}\sqrt{\|\Delta_jf(X)\|^p+\|\tilde{\Delta}_i\Delta_jf(X)\|^p}\left|\left(\Delta_{j}f(X^A)\right)_l\left(\Delta_{j}f(X)\right)_m\right|\right)^2\notag\\
&+\frac{3}{2}\sum_{i=1}^n\mathbbm{E}\left(\sum_{A\subsetneq [n]}k_{n,A}\sum_{j\not\in A}\mathbbm{1}_{[\tilde{\Delta}_i\Delta_jf(X)\neq 0]}\sqrt{\|\Delta_jf(X)\|^p+\|\tilde{\Delta}_i\Delta_jf(X)\|^p}\left|\tilde{\Delta}_i\left|\left(\Delta_{j}f(X^A)\right)_l\left(\Delta_{j}f(X)\right)_m\right|\right|\right)^2\notag\\
&+\frac{3}{2p}\sum_{i=1}^n\mathbbm{E}\left(\sum_{A\subsetneq [n]}k_{n,A}\sum_{j\not\in A}\sqrt{\|\Delta_jf(X)\|^p}\left|\tilde{\Delta}_i\left|\left(\Delta_{j}f(X^A)\right)_l\left(\Delta_{j}f(X)\right)_m\right|\right|\right)^2\notag\\
\leq &\frac{3}{2}\sum_{i=1}^n\mathbbm{E}\left(\sum_{A\subsetneq [n]}k_{n,A}\sum_{j\not\in A}\mathbbm{1}_{[\tilde{\Delta}_i\Delta_jf(X)\neq 0]}\sqrt{\|\Delta_jf(X)\|^p+\|\tilde{\Delta}_i\Delta_jf(X)\|^p}\left|\left(\Delta_{j}f(X^A)\right)_l\left(\Delta_{j}f(X)\right)_m\right|\right)^2\notag\\
&+\left(\frac{3}{2}+\frac{3}{2p}\right)\sum_{i=1}^n\mathbbm{E}\left(\sum_{A\subsetneq [n]}k_{n,A}\sum_{j\not\in A}\sqrt{\|\Delta_jf(X)\|^p+\|\tilde{\Delta}_i\Delta_jf(X)\|^p}\left|\tilde{\Delta}_i\left|\left(\Delta_{j}f(X^A)\right)_l\left(\Delta_{j}f(X)\right)_m\right|\right|\right)^2.\label{ineq_v_2}
\end{align}
Now, note that
\begin{align}
\left|\tilde{\Delta}_i\left|\left(\Delta_{j}f(X^A)\right)_l\left(\Delta_{j}f(X)\right)_m\right|\right|
\leq &\left|\tilde{\Delta}_i\left(\left(\Delta_{j}f(X^A)\right)_l\left(\Delta_{j}f(X)\right)_m\right)\right|\notag\\
&\hspace{-4cm}=\left|\tilde{\Delta}_i\left(\Delta_{j}f(X^A)\right)_l\left(\Delta_{j}f(X)\right)_m+\left(\Delta_{j}f(X^A)\right)_l\tilde{\Delta}_i\left(\Delta_{j}f(X)\right)_m-\tilde{\Delta}_i\left(\Delta_{j}f(X^A)\right)_l\tilde{\Delta}_i\left(\Delta_{j}f(X)\right)_m\right|\label{tilde_bound}
\end{align}
and therefore, using (\ref{ineq_v_2}), for $p=1,2$,
\begin{align*}
V_{l,m}^{(p)}
\leq &\frac{3}{2}\sum_{i=1}^n\mathbbm{E}\left[\left(\sum_{A\subsetneq [n]}k_{n,A}\sum_{j\not\in A}\mathbbm{1}_{[\tilde{\Delta}_i\Delta_jf(X)\neq 0]}\sqrt{\|\Delta_jf(X)\|^p+\|\tilde{\Delta}_i\Delta_jf(X)\|^p}\left|\left(\Delta_{j}f(X^A)\right)_l\left(\Delta_{j}f(X)\right)_m\right|\right)^2\right]\\
&+\left(\frac{9}{2}+\frac{9}{2p}\right)\sum_{i=1}^n\mathbbm{E}\left[\left(\sum_{A\subsetneq [n]}k_{n,A}\sum_{j\not\in A}\sqrt{\|\Delta_jf(X)\|^p+\|\tilde{\Delta}_i\Delta_jf(X)\|^p}\left|\tilde{\Delta}_i\left(\Delta_{j}f(X^A)\right)_l\left(\Delta_{j}f(X)\right)_m\right|\right)^2\right]\\
&+\left(\frac{9}{2}+\frac{9}{2p}\right)\sum_{i=1}^n\mathbbm{E}\left[\left(\sum_{A\subsetneq [n]}k_{n,A}\sum_{j\not\in A}\sqrt{\|\Delta_jf(X)\|^p+\|\tilde{\Delta}_i\Delta_jf(X)\|^p}\left|\left(\Delta_{j}f(X^A)\right)_l\tilde{\Delta}_i\left(\Delta_{j}f(X)\right)_m\right|\right)^2\right]\\
&+\left(\frac{9}{2}+\frac{9}{2p}\right)\sum_{i=1}^n\mathbbm{E}\left[\left(\sum_{A\subsetneq [n]}k_{n,A}\sum_{j\not\in A}\sqrt{\|\Delta_jf(X)\|^p+\|\tilde{\Delta}_i\Delta_jf(X)\|^p}\left|\tilde{\Delta}_i\left(\Delta_{j}f(X^A)\right)_l\tilde{\Delta}_i\left(\Delta_{j}f(X)\right)_m\right|\right)^2\right].
\end{align*}
Now
\begin{align*}
&\frac{3}{2}\sum_{l,m=1}^d\sum_{i=1}^n\mathbbm{E}\left[\left(\sum_{A\subsetneq [n]}k_{n,A}\sum_{j\not\in A}\mathbbm{1}_{[\tilde{\Delta}_i\Delta_jf(X)\neq 0]}\sqrt{\|\Delta_jf(X)\|^p+\|\tilde{\Delta}_i\Delta_jf(X)\|^p}\left|\left(\Delta_{j}f(X^A)\right)_l\left(\Delta_{j}f(X)\right)_m\right|\right)^2\right]\\
\leq& \frac{3}{2}\sum_{i=1}^n\mathbbm{E}\left[\left(\sum_{A\subsetneq [n]}k_{n,A}\sum_{j\not\in A}\mathbbm{1}_{[\tilde{\Delta}_i\Delta_jf(X)\neq 0]}\sqrt{\|\Delta_jf(X)\|^p+\|\tilde{\Delta}_i\Delta_jf(X)\|^p}\sum_{l,m=1}^d\left|\left(\Delta_{j}f(X^A)\right)_l\left(\Delta_{j}f(X)\right)_m\right|\right)^2\right]\\
\leq& \frac{3}{2}d^2\sum_{i=1}^n\mathbbm{E}\left[\left(\sum_{A\subsetneq [n]}k_{n,A}\sum_{j\not\in A}\mathbbm{1}_{[\tilde{\Delta}_i\Delta_jf(X)\neq 0]}\sqrt{\|\Delta_jf(X)\|^p+\|\tilde{\Delta}_i\Delta_jf(X)\|^p}\left\|\Delta_{j}f(X^A)\right\|\left\|\Delta_{j}f(X)\right\|\right)^2\right]
\end{align*}
and similar inequalities hold for the other terms. Therefore, for $p=1,2$,
\begin{align}
\sum_{l,m=1}^dV_{l,m}^{(p)}
\leq &\frac{3}{2}d^2\sum_{i=1}^n\mathbbm{E}\left[\left(\sum_{A\subsetneq [n]}k_{n,A}\sum_{j\not\in A}\mathbbm{1}[\tilde{\Delta}_i\Delta_jf(X)\neq 0]\sqrt{\|\Delta_jf(X)\|^p+\|\tilde{\Delta}_i\Delta_jf(X)\|^p}\left\|\Delta_{j}f(X^A)\right\|\left\|\Delta_{j}f(X)\right\|\right)^2\right]\notag\\
&+d^2\left(\frac{9}{2}+\frac{9}{2p}\right)\sum_{i=1}^n\mathbbm{E}\left[\left(\sum_{A\subsetneq [n]}k_{n,A}\sum_{j\not\in A}\sqrt{\|\Delta_jf(X)\|^p+\|\tilde{\Delta}_i\Delta_jf(X)\|^p}\left\|\tilde{\Delta}_i\Delta_{j}f(X^A)\right\|\left\|\Delta_{j}f(X)\right\|\right)^2\right]\notag\\
&+d^2\left(\frac{9}{2}+\frac{9}{2p}\right)\sum_{i=1}^n\mathbbm{E}\left[\left(\sum_{A\subsetneq [n]}k_{n,A}\sum_{j\not\in A}\sqrt{\|\Delta_jf(X)\|^p+\|\tilde{\Delta}_i\Delta_jf(X)\|^p}\left\|\Delta_{j}f(X^A)\right\|\left\|\tilde{\Delta}_i\Delta_{j}f(X)\right\|\right)^2\right]\notag\\
&+d^2\left(\frac{9}{2}+\frac{9}{2p}\right)\sum_{i=1}^n\mathbbm{E}\left[\left(\sum_{A\subsetneq [n]}k_{n,A}\sum_{j\not\in A}\sqrt{\|\Delta_jf(X)\|^p+\|\tilde{\Delta}_i\Delta_jf(X)\|^p}\left\|\tilde{\Delta}_i\Delta_{j}f(X^A)\right\|\left\|\tilde{\Delta}_i\Delta_{j}f(X)\right\|\right)^2\right]\notag\\
=& d^2\gamma_{2+p}^{2+p}.\label{final_v}
\end{align}
We also note that, using (\ref{general_fact}) and (\ref{tilde_bound}) as above,
\begin{align}
&\mathbbm{E}\left[\left(\sum_{A\subsetneq [n]}k_{n,A}\sum_{j\not\in A}\mathbbm{E}\left[\left.\left|\left(\Delta_jf(X^A)\right)_l\right|\,\left|\left(\Delta_jf(X)\right)_m\right|\right|X\right]\right)^2\right]\notag\\
\leq &\left(\sum_{A\subsetneq [n]}k_{n,A}\sum_{j\not\in A}\mathbbm{E}\left[\left|\left(\Delta_jf(X^A)\right)_l\right|\,\left|\left(\Delta_jf(X)\right)_m\right|\right]\right)^2\hspace{-2mm}+\hspace{-1mm}\frac{1}{2}\sum_{i=1}^n\mathbbm{E}\left[\left(\sum_{A\subsetneq [n]}k_{n,A}\sum_{j\not\in A}\tilde{\Delta}_i\left(\left|\left(\Delta_jf(X^A)\right)_l\right|\,\left|\left(\Delta_jf(X)\right)_m\right|\right)\right)^2\right]\notag\\
\leq &\left(\sum_{j=1}^n\mathbbm{E}\|\Delta_jf(X)\|^2\right)^2+\frac{3}{2}\sum_{i=1}^n\mathbbm{E}\left(\sum_{A\subsetneq [n]}k_{n,A}\sum_{j\not\in A}\tilde{\Delta}_i\left(\Delta_jf(X^A)\right)_l\left(\Delta_jf(X)\right)_m\right)^2+\frac{3}{2}\sum_{i=1}^n\mathbbm{E}\Bigg[\Bigg(\sum_{A\subsetneq [n]}\sum_{j\not\in A}k_{n,A}\notag\\
&\cdot\left(\Delta_jf(X^A)\right)_l\tilde{\Delta}_i\left(\Delta_jf(X)\right)_m\Bigg)^2\Bigg]+\frac{3}{2}\sum_{i=1}^n\mathbbm{E}\left[\left(\sum_{A\subsetneq [n]}k_{n,A}\sum_{j\not\in A}\tilde{\Delta}_i\left(\Delta_jf(X^A)\right)_l\tilde{\Delta}_i\left(\Delta_jf(X)\right)_m\right)^2\right]\notag\\
\leq &\left(\sum_{j=1}^n\mathbbm{E}\|\Delta_jf(X)\|^2\right)^2+3\sum_{i=1}^n\left(\sum_{j=1}^n\left(\mathbbm{E}\left\|\tilde{\Delta}_i\Delta_jf(X)\right\|^4\mathbbm{E}\left\|\Delta_jf(X)\right\|^4\right)^{1/4}\right)^2+\frac{3}{2}\sum_{i=1}^n\left(\sum_{j=1}^n\left(\mathbbm{E}\left\|\tilde{\Delta}_i\Delta_jf(X)\right\|^4\right)^{1/2}\right)^2\notag\\
<&\infty,\label{variance_exists}
\end{align}
which shows that the variance in the definition of $V_{l,m}^{(1)}$ and $V_{l,m}^{(2)}$ exists. 
\subsubsection{Step 5 - concluding argument}
Using (\ref{remaining_bound}) - (\ref{v_def}) and (\ref{final_v}), we obtain that, for all $t\in(0,1)$,
\begin{align}
&\left|\mathbbm{E}\left<\nabla f_{t,h,\Sigma}(W),W\right>-\left<\text{Hess}(f_{t,h,\Sigma}),T\right>_{H.S.}\right|\notag\\
\leq&\frac{\sqrt{6}d^{3/2}}{2\sqrt{t}}\|\Sigma^{-1}\|^{3/2}_{op}\left(\sqrt{\sum_{k,l,m=1}^d\left(U_{k,l,m}^{(2)}\right)^2}+\sqrt{d\sum_{l,m=1}^d\left(R_{l,m}^{(1)}\right)^2}+\sqrt{d\sum_{l,m=1}^d\left(R_{l,m}^{(2)}\right)^2}\right)\notag\\
\leq& \frac{\sqrt{6}d^{3/2}}{2\sqrt{t}}\|\Sigma^{-1}\|^{3/2}_{op}\Bigg(d^{3/2}\gamma_2^2+d^2\|\Sigma^{-1/2}\|_{op}\gamma_2^2+2d_{convex}(W,N_{\Sigma})d^{3/2}\gamma_1\notag\\
&\hspace{3cm}+d^{1/2}\sqrt{2\cdot d_{convex}(W,N_{\Sigma})d^2\gamma_3^3+2\|\Sigma^{-1/2}\|_{op}d^{5/2}\gamma_4^4}\Bigg).\label{final1}
\end{align}
Therefore, using (\ref{schulte_bound}), (\ref{stein_equation}), (\ref{j1}) and (\ref{final1}),
\begin{align*}
d_{convex}(W,N_{\Sigma})
\leq& \frac{4}{3}\|\Sigma^{-1}\|_{op}d\sqrt{(\log t)^2d_{convex}(W,N_{\Sigma})+530d^{5/6}}\sqrt{\mathbbm{E}\left\|\mathbbm{E}[T-\Sigma |W]\right\|_{H.S.}^2}\\
 &+\frac{2\sqrt{6}d^{3/2}}{3\sqrt{t}}\|\Sigma^{-1}\|^{3/2}_{op}\Bigg(d^{3/2}\gamma_2^2+d^2\|\Sigma^{-1/2}\|_{op}\gamma_2^2+2d_{convex}(W,N_{\Sigma})d^{3/2}\gamma_1\notag\\
&\hphantom{............................}+d^{3/2}\sqrt{2\cdot d_{convex}(W,N_{\Sigma})\gamma_3^3+2d^{1/2}\|\Sigma^{-1/2}\|_{op}\gamma_4^4}\Bigg) +\frac{20}{\sqrt{2}}d\frac{\sqrt{t}}{1-t},
\end{align*}
for all $t\in (0,1)$. Assuming that $t\in (0,1/2)$, we have $t^{1/4}|\log t|\leq 2$ and $1-t\geq 1/2$ and so, in this case,
\begin{align*}
d_{convex}(W,N_{\Sigma})
\leq& \frac{4}{3}\|\Sigma^{-1}\|_{op}d\sqrt{\frac{4}{\sqrt{t}}d_{convex}(W,N_{\Sigma})+530d^{5/6}}\sqrt{\mathbbm{E}\left\|\mathbbm{E}[T-\Sigma |W]\right\|_{H.S.}^2}\\
&+\frac{2\sqrt{6}d^{3/2}}{3\sqrt{t}}\|\Sigma^{-1}\|^{3/2}_{op}\Bigg(d^{3/2}\gamma_2^2+d^2\|\Sigma^{-1/2}\|_{op}\gamma_2^2+2d_{convex}(W,N_{\Sigma})d^{3/2}\gamma_1\notag\\
&\hphantom{............................}+d^{3/2}\sqrt{2\cdot d_{convex}(W,N_{\Sigma})\gamma_3^3+2d^{1/2}\|\Sigma^{-1/2}\|_{op}\gamma_4^4}\Bigg) +\frac{40}{\sqrt{2}}d\sqrt{t}.
\end{align*}
Suppose that $\gamma<\frac{1}{\sqrt{2}}$ (otherwise (\ref{convex_result}) clearly holds)  and choose $\sqrt{t}=\max\left\lbrace \frac{\sqrt{2}}{80d}d_{convex}(W,N_{\Sigma}),\gamma\right\rbrace$. Then
\begin{align*}
d_{convex}(W,N_{\Sigma})
\leq& \frac{4}{3}\|\Sigma^{-1}\|_{op}d\sqrt{\frac{320d}{\sqrt{2}}+530d^{5/6}}\,\,\gamma+\frac{2\sqrt{6}d^{3/2}}{3}\|\Sigma^{-1}\|^{3/2}_{op}\Bigg(d^{3/2}\gamma+d^2\|\Sigma^{-1/2}\|_{op}\gamma\notag\\
&+80\sqrt{2}d^{5/2}\gamma+d^{3/2}\sqrt{80\sqrt{2}d+2d^{1/2}\|\Sigma^{-1/2}\|_{op}}\,\,\gamma\Bigg)+\frac{40}{\sqrt{2}}d\gamma+\frac{1}{2}d_{convex}(W,N_{\Sigma})
\end{align*}
and (\ref{convex_result}) follows, as required. \qed
\subsection{Proof of Lemma \ref{lemma_bn}}\label{proof_lemma_bn}
We will follow a strategy similar to that used in the proof of \cite[Theorem 5.1]{peccati_lachieze-rey2017}. Recall that $T:=\sum_{A\subsetneq[n]}k_{n,A}T_A/2$ for $k_{n,A}=\frac{1}{{n\choose |A|}(n-|A|)}$. Note that, using the fact that, for any $g:\mathcal{X}^{2n}\to\mathbbm{R}$ and  $U=g(X,X')$, $\text{Var}(\mathbbm{E}(U|X))\leq \mathbbm{E}\left(\text{Var}(U|X')\right)$ (see \cite[Lemma 4.4]{new_stein}), we have
\begin{align}
\sqrt{\mathbbm{E}\left\|\mathbbm{E}[T|X]-\mathbbm{E}T\right\|_{H.S.}^2}
\leq& \frac{1}{2}\sum_{A\subsetneq [n]}k_{n,A}\sqrt{\mathbbm{E}\left\|\mathbbm{E}[T_A|X]-\mathbbm{E}T_A\right\|_{H.S.}^2}
\leq\frac{1}{2}\sum_{A\subsetneq [n]}k_{n,A}\sqrt{\mathbbm{E}\left\|T_A-\mathbbm{E}[T_A|X']\right\|_{H.S.}^2}.\label{t_bound}
\end{align}
Introduce the substitution operator
$$\tilde{S}_i(X)=\left(X_0,\dots,\tilde{X}_i,\dots,X_n\right).$$
Fix $A\subsetneq [n]$ and note that, by the Efron-Stein inequality \cite{steele},
\begin{align}
\frac{1}{2}\sum_{A\subsetneq [n]}k_{n,A}\sqrt{\mathbbm{E}\left\|T_A-\mathbbm{E}[T_A|X']\right\|_{H.S.}^2}\leq \frac{1}{\sqrt{8}}\sum_{A\subsetneq [n]}k_{n,A}\sqrt{\sum_{i=1}^{n}\mathbbm{E}\left\|\tilde{\Delta}_iT_A\right\|_{H.S.}^2}.\label{t_bound1}
\end{align}
Recall also that $T_A=\sum_{j\not\in A}\left[\Delta_jf(X)\right]\left[\Delta_jf(X^A)\right]^T\in\mathbbm{R}^{d\times d}$ and so
\begin{align}
&\sum_{i=1}^{n}\mathbbm{E}\left\|\tilde{\Delta}_i T_A\right\|_{H.S.}^2=\sum_{i=1}^{n}\sum_{j,k\not\in A}\mathbbm{E}\left<\tilde{\Delta}_i\left(\Delta_jf(X)\Delta_jf(X^A)^T\right),\tilde{\Delta}_i\left(\Delta_kf(X)\Delta_kf(X^A)^T\right)\right>_{H.S.}.\label{efron_ta_covering}
\end{align}
Moreover, note that for $j\not\in A$ and a fixed $i\in[n]$,
\begin{align}
\tilde{\Delta}_i\left(\Delta_jf(X)\Delta_jf(X^A)^T\right)=&\tilde{\Delta}_i\Delta_jf(X)\Delta_jf(X^A)^T+\Delta_jf(\tilde{S}_i(X))\tilde{\Delta}_i\Delta_jf(X^A)^T.\label{efron_ta2_covering}
\end{align}
We shall now analyse each summand in (\ref{efron_ta_covering}) separately. In order to do this, we introduce the vector $\bar{X}$ by $\bar{X}_i=\tilde{X}_i$ and $\bar{X}_l=X_l'$, if $l\neq i$ and for any $x\in \mathcal{X}^n$ and any mapping $\varphi:\mathcal{X}^n\to\mathbbm{R}^d$, we define
\begin{align*}
\bar{\Delta}_l\varphi(x)=\varphi(x)-\varphi(x_1,\dots,x_{l-1},\bar{X}_l,x_{l+1},\dots,x_{l+1},\dots,x_n).
\end{align*}
Note that, if $i,j,k$ are pairwise distinct, then, by (\ref{efron_ta2_covering}), the  corresponding summands in (\ref{efron_ta_covering}) are upper-bounded by
\begin{align}
&4\sup_{(Y,Y',Z,Z')}\mathbbm{E}\left|\left<\bar{\Delta}_i\bar{\Delta}_jf(Y)\bar{\Delta}_jf(Y')^T,\bar{\Delta}_i\bar{\Delta}_kf(Z)\bar{\Delta}_kf(Z')^T\right>_{H.S.}\right|\notag\\
=&4\sup_{(Y,Y',Z,Z')}\mathbbm{E}\left|\left<\Delta_i\Delta_jf(Y)\Delta_jf(Y')^T,\Delta_i\Delta_kf(Z)\Delta_kf(Z')^T\right>_{H.S.}\right|\notag\\
\leq &4\sup_{(Y,Y',Z,Z')}\mathbbm{E}\left[\mathbbm{1}_{\lbrace  \Delta_{i,j}f(Y)\neq 0, \Delta_{i,k}f(Z)\neq 0\rbrace}\left(\left\|\Delta_jf(Y)\Delta_jf(Y')^T\right\|_{H.S.}+\left\|\Delta_jf(Y^i)\Delta_jf(Y')^T\right\|_{H.S.}\right)\right.\notag\\
&\left.\phantom{......................}\cdot\left(\left\|\Delta_kf(Y)\Delta_kf(Y')^T\right\|_{H.S.}+\left\|\Delta_kf(Y^i)\Delta_kf(Y')^T\right\|_{H.S.}\right)\right]\notag\\
=&4\sup_{(Y,Y',Z,Z')}\mathbbm{E}\left[\mathbbm{1}_{\lbrace  \Delta_{i,j}f(Y)\neq 0, \Delta_{i,k}f(Z)\neq 0\rbrace}\left(\left\|\Delta_jf(Y)\right\|\,\left\|\Delta_jf(Y')\right\|+\left\|\Delta_jf(Y^i)\right\|\,\left\|\Delta_jf(Y')\right\|\right)\right.\notag\\
&\left.\phantom{......................}\cdot\left(\left\|\Delta_kf(Y)\right\|\,\left\|\Delta_kf(Y')\right\|+\left\|\Delta_kf(Y^i)\right\|\,\left\|\Delta_kf(Y')\right\|\right)\right]\notag\\
\leq &16\sup_{\left(Y,Y',Z,Z'\right)}\mathbbm{E}\left[\mathbbm{1}_{\lbrace  \Delta_{i,j}f(Y)\neq 0, \Delta_{i,k}f(Y')\neq 0\rbrace}\left\|\Delta_jf(Z)\right\|^2\left\|\Delta_kf(Z')\right\|^2\right]
=16B_n',\label{ta_bound1}
\end{align}
where $Y,Y',Z,Z'$ are recombinations of $\{X,X',\tilde{X}\}$.
Now, using similar computations and the Cauchy-Schwarz inequality, we obtain that, in the case $i\neq j=k$, the corresponding summands in (\ref{efron_ta_covering}) are upper-bounded by
\begin{align}
&4\sup_{(Y,Y',Z,Z')}\mathbbm{E}\left|\left<\bar{\Delta}_i\bar{\Delta}_jf(Y)\bar{\Delta}_jf(Y')^T,\bar{\Delta}_i\bar{\Delta}_kf(Z)\bar{\Delta}_kf(Z')^T\right>_{H.S.}\right|\notag\\
\leq &4\sup_{(Y,Y',Z,Z')}\mathbbm{E}\left[\left\|\bar{\Delta}_i\bar{\Delta}_jf(Y)\right\|\,\left\|\bar{\Delta}_jf(Y')\right\|\,\left\|\bar{\Delta}_i\bar{\Delta}_kf(Z)\right\|\,\left\|\bar{\Delta}_kf(Z')\right\|\right]\notag\\
\leq&4\sup_{(Y,Y')}\mathbbm{E}\left[\left\|\Delta_j\Delta_if(Y)\right\|^2\left\|\Delta_jf(Y')\right\|^2\right]\notag\\
\leq &4\sup_{(Y,Y')}\mathbbm{E}\left[\mathbbm{1}_{\lbrace \Delta_{i,j}f(Y)\neq 0\rbrace}\left(\left\|\Delta_if(Y)\right\|+\left\|\Delta_if(Y^j)\right\|\right)^2\left\|\Delta_jf(Y')\right\|^2\right]\notag\\
\leq &16\sup_{(Y,Y',Z)}\mathbbm{E}\left[\mathbbm{1}_{\lbrace \Delta_{i,j}f(Y)\neq 0\rbrace}\left\|\Delta_if(Z)\right\|^2\left\|\Delta_jf(Y')\right\|^2\right]=16B_n(f).\label{ta_bound2}
\end{align}

We note that, in the case $i=j$, using $\tilde{\Delta}_i\Delta_i(\cdot)=\tilde{\Delta}_i(\cdot)$, the Hilbert-Schmidt norm of the right-hand side of (\ref{efron_ta2_covering}) can be bounded by
\begin{align}
\left\|\tilde{\Delta}_if(X)\Delta_if(X^A)^T\right\|_{H.S.}+\left\|\Delta_if(\tilde{S}_i(X))\tilde{\Delta}_if(X^A)^T\right\|_{H.S.}
&= \left\|\tilde{\Delta}_if(X)\right\|\,\left\|\Delta_if(X^A)\right\|+\left\|\Delta_if(\tilde{S}_i(X))\right\|\,\left\|\tilde{\Delta}_if(X^A)\right\|\nonumber\\
&\hspace{-3.5cm}\leq \frac{1}{2}\left[\left\|\tilde{\Delta}_if(X)\right\|^2+\left\|\Delta_if(X^A)\right\|^2+\left\|\Delta_if(\tilde{S}_i(X))\right\|^2+\left\|\tilde{\Delta}_if(X^A)\right\|^2\right].\label{efron_ta3_covering}
\end{align}
If $i\not\in A$ and $i=j=k$ then it follows from (\ref{efron_ta3_covering}), that the corresponding summands in (\ref{efron_ta_covering}) are smaller than
\begin{align}
\frac{1}{4}\mathbbm{E}\left[\left(\left\|\Delta_if(X)\right\|^2+\left\|\tilde{\Delta}_if(X^A)\right\|^2+\left\|\Delta_if(\tilde{S}_i(X))\right\|^2+\left\|\tilde{\Delta}_if(X^A)\right\|^2\right)^2\right]\leq 4\mathbbm{E}\left\|\Delta_if(X)\right\|^4.\label{ta_bound3}
\end{align}

Finally, the corresponding summands in (\ref{efron_ta_covering}) in the case $i=j\neq k$ are upper-bounded by
\begin{align}
&4\sup_{(Y,Y',Z)}\mathbbm{E}\left|\left<\bar{\Delta}_if(Y)^T\bar{\Delta}_if(Y),\bar{\Delta}_i\bar{\Delta}_kf(Y')^T\bar{\Delta}_kf(Z)\right>_{H.S.}\right|\notag\\
\leq &4\sup_{(Y,Y',Z)}\mathbbm{E}\left[\mathbbm{1}_{\lbrace\Delta_{i,k}f(Y)\neq 0\rbrace}\left\|\Delta_if(Y')\right\|^2\left(\left\|\Delta_kf(Y)\right\|+\left\|\Delta_kf(Y^i)\right\|\right)\left\|\Delta_kf(Z)\right\|\right]\notag\\
\leq &8B_n(f).\label{ta_bound4}
\end{align}

Therefore, by (\ref{t_bound}) - (\ref{efron_ta_covering}), (\ref{ta_bound1}), (\ref{ta_bound2}), (\ref{ta_bound3}) and (\ref{ta_bound4}),
\begin{align*}
&\sqrt{\mathbbm{E}\left\|\mathbbm{E}[T|X]-\mathbbm{E}T\right\|_{H.S.}^2}\\
\leq& \sqrt{2n}\sum_{A\subsetneq [n]}k_{n,A}\sqrt{\sum_{j,k\not\in A}\left[\mathbbm{1}_{[j=k=1]}\mathbbm{E}\left\|\Delta_1f(X)\right\|^4+\left(\mathbbm{1}_{[j=k\neq 1]}+\mathbbm{1}_{[k\neq j=1]}\right)B_n(f)+\mathbbm{1}_{[1\neq k\neq j\neq 1]}B_n'(f)\right]}\\
\leq & \sqrt{2n}\left(\sqrt{\mathbbm{E}\left\|\Delta_1f(X)\right\|^4}\sum_{A\subsetneq[n]:1\not\in A}k_{n,A}+\sqrt{2B_n(f)}\sum_{A\subsetneq[n]}k_{n,A}\sqrt{n-|A|}+\sqrt{B_n'(f)}\sum_{A\subsetneq[n]}k_{n,A}(n-|A|)\right)\\
\leq &4\sqrt{n}\left(\sqrt{nB_n(f)}+\sqrt{n^2B_n'(f)}+\sqrt{\mathbbm{E}\|\Delta_1f(X)\|^4}\right),
\end{align*}
where we used the fact that $\sum_{A\subsetneq [n]}k_{n,A}\sqrt{n-|A|}=\sum_{k=1}^n\frac{1}{\sqrt{k}}\leq \int_0^n\frac{dx}{\sqrt{x}}=2\sqrt{n}.$\qed

\section{Proofs of the results related to covering processes}\label{covering process proofs}
In the arguments below we shall use the following notation. For any $k\in[n]$ and a set of $k$ pairwise distinct indices $T=\{i_1,\dots,i_k\}\subset[n]$ and for any $Z=(Z_1,\dots,Z_n)$ such that $Z_l\subset\mathbbm{R}^d$ for all $l\in[n]$,
$$F^T_n(Z)=F^{(i_1,i_2,\dots,i_k)}_n(Z):=\underset{l\not\in\lbrace i_1,\dots,i_k\rbrace}{\bigcup_{1\leq l\leq n}}Z_l.$$
We will also often write $F_n^T:=F^T_n(X)$ and $F^{(i_1,i_2,\dots,i_k)}_n:=F^{(i_1,i_2,\dots,i_k)}_n(X)$.
 For such $Z$ and for any $1\leq i\neq j\leq n$, let $Z^{\{j\}}=\left(Z_1,\dots,Z_{j-1},X_j',Z_{j+1},\dots,Z_n\right)$, $Z_{\{i\}}=\left(Z_1,\dots,Z_{i-1},\tilde{X}_i,Z_{i+1},\dots,Z_n\right)$ and $$Z^{\{i\},\{j\}}=\left(Z^{\{i\},\{j\}}_1,\dots,Z^{\{i\},\{j\}}_n\right),$$ where, for $l\in [n]$,
$Z^{\{i\},\{j\}}_l=Z^{\{j\}}_l$, if $l\neq i$ and $Z^{\{i\},\{j\}}_l=\tilde{X}_i$, if $l=i$. Furthermore, for $i\neq j$ we define
\begin{align*}
&\Delta_{j}f(Z)=f(Z)-f\left(Z^{\{j\}}\right),\quad \tilde{\Delta}_i\Delta_{j}f(Z)=f(Z)-f\left(Z^{\{j\}}\right)-f\left(Z_{\{i\}}\right)+f\left(Z^{\{i\},\{j\}}\right).
\end{align*}
By $\mathcal{K}^d$ we will denote the set of compact convex subsets of $\mathbbm{R}^d$. We will also often use the well-known bound $\min(\kappa_0,\dots,\kappa(d))\geq\frac{1}{d!}$, which readily gives that $V_i(L)\leq d!\overline{V}(L)$, for any $i=0,\dots,d$ and $L\in\mathcal{K}^d$.
Finally, we will repeatedly use the fact that, for any $i=0,\dots,d$, $V_i$ is a translation invariant and additive functional on the ring of finite unions of  convex bodies in $\mathbb{R}^d$ and it is non-negative and monotone on $\mathcal{K}^d$.

\subsection{Initial lemmas}
We start by quoting a result from \cite{last_penrose}.
\begin{lemma}[Proposition 22.5 in \cite{last_penrose}]\label{integral_lemma}
For all $L,M\in \mathcal{K}^d$,
$$\int_{\mathbbm{R}^d}\overline{V}\left((x+L)\cap M\right) dx\leq \overline{V}(L)\overline{V}(M).$$
\end{lemma}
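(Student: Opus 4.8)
The plan is to deduce the inequality from the Steiner formula combined with an elementary Fubini‑type identity for volumes of translated intersections. The key first observation is the identity $\overline{V}(K)=V_d\big(K\oplus B(0,1)\big)$, valid for every compact convex set $K$, where $\oplus$ denotes the Minkowski sum: indeed, the Steiner formula reads $V_d\big(K\oplus \rho B(0,1)\big)=\sum_{l=0}^d\rho^{\,d-l}\kappa_{d-l}V_l(K)$, whose coefficients at $\rho=1$ are exactly those entering the definition $\overline{V}(A)=\sum_{l=0}^d\kappa_{d-l}V_l(A)$ (one uses here that $V_d$ is the volume and $V_0\equiv 1$ on nonempty convex bodies, so the normalisations match). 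Since $(x+L)\cap M$ is again compact and convex, this gives $\overline{V}\big((x+L)\cap M\big)=V_d\big(((x+L)\cap M)\oplus B(0,1)\big)$ for every $x\in\mathbbm{R}^d$, with the convention that both sides vanish when the intersection is empty.

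The next step is the set inclusion
$$\big((x+L)\cap M\big)\oplus B(0,1)\ \subseteq\ \big(x+(L\oplus B(0,1))\big)\cap\big(M\oplus B(0,1)\big),\qquad x\in\mathbbm{R}^d,$$
which holds because adding a fixed ball preserves intersections up to inclusion: if $z=a+b$ with $a\in(x+L)\cap M$ and $b\in B(0,1)$, then $z\in x+(L\oplus B(0,1))$ and $z\in M\oplus B(0,1)$ simultaneously. By monotonicity of the Lebesgue measure this yields
$$\overline{V}\big((x+L)\cap M\big)\ \le\ V_d\Big(\big(x+(L\oplus B(0,1))\big)\cap\big(M\oplus B(0,1)\big)\Big).$$
Finally I would invoke the translative identity $\int_{\mathbbm{R}^d}V_d\big((x+A)\cap C\big)\,dx=V_d(A)\,V_d(C)$ for measurable $A,C\subseteq\mathbbm{R}^d$, which follows from Tonelli's theorem by writing the left‑hand side as $\int_{\mathbbm{R}^d}\int_{\mathbbm{R}^d}\mathbbm{1}_{[y-x\in A]}\mathbbm{1}_{[y\in C]}\,dy\,dx$ and performing the $x$‑integration first. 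Integrating the previous display over $x\in\mathbbm{R}^d$ and applying this identity with $A=L\oplus B(0,1)$ and $C=M\oplus B(0,1)$ gives
$$\int_{\mathbbm{R}^d}\overline{V}\big((x+L)\cap M\big)\,dx\ \le\ V_d\big(L\oplus B(0,1)\big)\,V_d\big(M\oplus B(0,1)\big)=\overline{V}(L)\,\overline{V}(M),$$
which is the claim.

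I do not expect a genuine obstacle here: the argument is essentially two lines once the representation $\overline{V}(\cdot)=V_d(\cdot\oplus B(0,1))$ is in place, and the only point requiring a little care is precisely verifying that identification, i.e. that the intrinsic‑volume normalisation used to define $\overline{V}$ is the one appearing in the Steiner formula (a quick check on a ball of radius $r$ gives $\overline{V}(B(0,r))=\kappa_d(1+r)^d=V_d(B(0,1+r))$, confirming it). Measurability of $x\mapsto\overline{V}((x+L)\cap M)$ is immediate since this map is bounded and upper semicontinuous. An alternative derivation through the Gaussian (heat‑kernel) representation of the Wills functional is available but longer, so I would adopt the parallel‑set argument above.
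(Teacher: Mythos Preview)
Your argument is correct, and it is in fact the standard one: the paper does not supply its own proof of this lemma but quotes it as Proposition~22.5 of \cite{last_penrose}, whose proof proceeds exactly via the parallel-body identification $\overline{V}(K)=V_d(K\oplus B(0,1))$, the inclusion $((x+L)\cap M)\oplus B(0,1)\subseteq (x+L\oplus B(0,1))\cap(M\oplus B(0,1))$, and the Fubini identity $\int V_d((x+A)\cap C)\,dx=V_d(A)V_d(C)$. There is nothing to add.
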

Now, we prove a useful estimate.
\begin{lemma}\label{penrose_lemma}
For all $L\in\mathcal{K}^d$, $k=0,\dots,n-1$, and $i=0,\dots,d$,
\begin{align*}
&A)\quad\mathbbm{E}\left|\overline{V}\left(L\cap \bigcup_{l=k+1}^n X_l\right)\right|\leq e^{\overline{V}(K)}\overline{V}(L);\\
&B)\quad\mathbbm{E}\left|\overline{V}\left(L\cap \bigcup_{l=k+1}^n X_l\right)\right|^m\leq \left(2^{2^d+2d}d^{d/2}\right)^m e^{2^m(2R+1)^d}\overline{V}(L)^m,\quad\text{for all } m\geq 1.
\end{align*}
\end{lemma}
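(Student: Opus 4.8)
The overall strategy is to expand $\overline{V}$ by inclusion--exclusion and then repeatedly apply the integral--geometric estimate of Lemma~\ref{integral_lemma}. Since each intrinsic volume $V_i$ is a valuation on the convex ring $\mathcal{R}^d$, so is $\overline{V}$, and writing $X_l=C_l+K$ one has
\[
\overline{V}\Big(L\cap\bigcup_{l=k+1}^{n}X_l\Big)=\sum_{\emptyset\neq J\subseteq\{k+1,\dots,n\}}(-1)^{|J|+1}\,\overline{V}\Big(L\cap\bigcap_{l\in J}X_l\Big).
\]
Crucially, each set $L\cap\bigcap_{l\in J}X_l$ is an intersection of compact convex sets, hence lies in $\mathcal{K}^d$, so its Wills functional is non-negative and monotone. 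Thus $\big|\overline{V}(L\cap\bigcup_l X_l)\big|\le S$, where $S:=\sum_{\emptyset\neq J}\overline{V}(L\cap\bigcap_{l\in J}X_l)\ge 0$, and the triangle inequality has cost nothing qualitatively.

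For part A), take expectations in $|\overline{V}(L\cap\bigcup_l X_l)|\le S$ and use the exchangeability of $(C_l)$ to reduce to $\mathbbm{E}\,\overline{V}(L\cap X_1\cap\cdots\cap X_j)$ for each cardinality $j=|J|$. Writing this expectation as an integral over $E_n^{\,j}$ against the density $n^{-j}$, enlarging each domain from $E_n$ to $\mathbbm{R}^d$ (the integrand being non-negative), and integrating out $c_j,\dots,c_1$ one variable at a time by means of $\int_{\mathbbm{R}^d}\overline{V}\big((c+K)\cap M\big)\,dc\le\overline{V}(K)\,\overline{V}(M)$ (Lemma~\ref{integral_lemma} with $L$ replaced by $K$), one gets $\mathbbm{E}\,\overline{V}(L\cap X_1\cap\cdots\cap X_j)\le n^{-j}\overline{V}(K)^j\overline{V}(L)$. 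Since the number of $J$ with $|J|=j$ is $\binom{n-k}{j}\le n^j/j!$, summing over $j\ge 1$ gives $\mathbbm{E}\,S\le\overline{V}(L)\sum_{j\ge 1}\overline{V}(K)^j/j!\le e^{\overline{V}(K)}\overline{V}(L)$, which is A).

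For part B) the extra difficulty is that a naive expansion of $\mathbbm{E}\,S^m$ produces $\sim n^{m}$ summands, each of size $\sim n^{-1}$, so the cancellation must be tracked; the idea is to anchor each inner intersection to a single ball. Grouping $S$ according to $l_0:=\min J$ and using both $\overline{V}(L\cap\bigcap_{l\in J}X_l)\le\overline{V}(L\cap X_{l_0})$ and the elementary fact that $\bigcap_{l\in J}X_l\neq\emptyset$ forces all the $X_l$, $l\in J$, into a common ball of radius $2R$ (so that $J\setminus\{l_0\}\subseteq\{l:\ \|C_l-C_{l_0}\|\le 2R\}$), one obtains
\[
S\ \le\ \sum_{l_0=k+1}^{n}\overline{V}(L\cap X_{l_0})\,2^{N_{l_0}},\qquad N_{l_0}:=\#\{\,l>l_0:\ \|C_l-C_{l_0}\|\le 2R\,\}.
\]
Raising this to the $m$-th power, expanding over anchor tuples $(l_1,\dots,l_m)$, and estimating $\mathbbm{E}\big[\prod_r\overline{V}(L\cap X_{l_r})\,2^{N_{l_r}}\big]$ by conditioning on the anchor germs and then integrating them out with Lemma~\ref{integral_lemma}: for fixed anchors each $N_{l_r}$ is a sum of $\mathrm{Bernoulli}(\le\kappa_d(2R)^d/n)$ variables, so an exponential-moment bound applies, and the joint exponential moment of the $N_{l_r}$ factorises over the remaining germs into a product of total mass $O(1)$; organising the anchor tuples by their number $q$ of distinct values makes the powers of $n$ cancel (each distinct germ contributing one factor $\overline{V}(K)/n$ through Lemma~\ref{integral_lemma}, repeated anchors being absorbed via $\overline{V}(L\cap X_l)\le\overline{V}(K)$) and leaves a convergent series of the form $\sum_q q^m\overline{V}(K)^q/q!$. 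Bundling the resulting numerical and dimensional constants — and using that independent anchor germs are typically pairwise far apart, so that configurations in which several of the balls $X_{l_r}$ overlap contribute only lower-order terms — one arrives at the stated prefactor $\big(2^{2^d+2d}d^{d/2}\big)^m$ and exponential factor $e^{2^m(2R+1)^d}$.

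The hard part is entirely in B): the expansion of $S^m$ is off by a factor of $n$ at every intermediate step, so one must \emph{simultaneously} (i) make each distinct anchor germ produce exactly one factor $\overline{V}(K)/n$ via Lemma~\ref{integral_lemma}, which is what removes the $n$-dependence, and (ii) bound the joint exponential moments of the neighbour counts $N_{l_r}$ sharply enough, which forces one to exploit that random germs in the large box $E_n$ are typically far apart; both the $e^{2^m(2R+1)^d}$ factor and the super-exponential constant $(2^{2^d+2d}d^{d/2})^m$ are produced by this last, purely quantitative, bookkeeping (crude bounds on volumes of balls versus cubes, on binomial coefficients, and the like). Part A) is routine once the inclusion--exclusion/Lemma~\ref{integral_lemma} machinery is in place.
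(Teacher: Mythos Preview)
Your argument for part~A) is correct and is exactly the paper's argument: inclusion--exclusion followed by iterated application of Lemma~\ref{integral_lemma}.

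For part~B), however, you take a genuinely different route from the paper, and your sketch has real gaps. The paper does \emph{not} anchor to germs. Instead it covers $L$ by the integer grid of unit cubes $Q_z = z + [-1/2,1/2]^d$, $z\in I(L)$, and performs inclusion--exclusion over the cubes (following Last--Penrose \cite[Proposition~22.4]{last_penrose}). This yields
\[
\Big|V_l\Big(L\cap\bigcup X_l\Big)\Big|\ \le\ \overline{V}(Q_0)\sum_{\emptyset\neq I\subset I(L)}\mathbf{1}\Big[\bigcap_{z\in I}Q_z\neq\emptyset\Big]\,2^{N(Q_{z(I)})},
\]
where $N(Q_z)$ counts grains hitting a \emph{fixed} unit cube. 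The number of nonempty subsets $I$ with $\bigcap_{z\in I}Q_z\neq\emptyset$ is at most $2^{2^d}d^{d/2}\overline{V}(L)$ (this is where the odd-looking constant $2^{2^d}d^{d/2}$ comes from: $2^d$ cubes share each corner, and the diameter of $Q_0$ is $\sqrt d$). Raising to the $m$-th power and bounding $\mathbb{E}[2^{mN(Q_z)}]$ by independence of the germs gives the stated constants directly, with no anchor-overlap issue to worry about.

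Your anchoring approach can be pushed through to a bound of the same shape, $C(m,d,R)\overline{V}(L)^m$, but not with the constants stated in the lemma. First, the exponential factor: your neighbour count $N_{l_0}$ involves the ball $B(C_{l_0},2R)$ of volume $\kappa_d(2R)^d$, which in general exceeds $(2R+1)^d$, so the exponent you get is $2^m\kappa_d(2R)^d$, not $2^m(2R+1)^d$. Second, and more importantly, the prefactor $2^{2^d+2d}d^{d/2}$ is specific to the cube-covering combinatorics and does not arise from your germ-anchoring; your argument instead produces factors like $\overline{V}(K)^m$ and Stirling/Bell numbers in~$m$. So your final sentence --- ``bundling the constants one arrives at the stated prefactor'' --- is not justified: you would arrive at a \emph{different} (still finite) prefactor. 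Finally, the step you flag as the crux, ``independent anchor germs are typically pairwise far apart'', is in fact unnecessary once you bound the conditional moment uniformly in the anchor positions (which can be done via $\mathbb{E}[\prod_s(1+(2^{e_s}-1)Y_s)]\le 1+(2^m-1)p$); but that uniform bound is a nontrivial step you have not supplied.
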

\begin{proof}
To prove A), we apply the inclusion-exclusion formula and the triangle inequality:
\begin{align*}
\mathbbm{E}\left|\overline{V}\left(L\cap \bigcup_{l=k+1}^n X_l\right)\right|\leq&\sum_{l=1}^n{n\choose l}\mathbbm{E}\left[\overline{V}(L\cap X_1\cap\dots\cap X_l)\right]\\
\leq&\sum_{l=1}^n \frac{{n\choose l}}{n^l}\int_{E_n}\dots\int_{E_n}\overline{V}(L\cap(K+x_1)\cap\dots\cap(K+x_l))dx_1\dots dx_l\\
\stackrel{\text{Lemma }\ref{integral_lemma}}\leq&\overline{V}(L)\sum_{l=1}^n\frac{1}{l!}\overline{V}(K)^l
\leq e^{\overline{V}(K)}\overline{V}(L).
\end{align*}

To prove B), we use a strategy similar to that of the proof of \cite[Proposition 22.4]{last_penrose}. Let $Q_0=[-1/2,1/2]^d$ and, for any $z\in\mathbbm{R}^d$, $Q_z=Q_0+z$. Let $I(L)=\{z\in\mathbbm{Z}^d:Q_z\cap L\neq\emptyset\}$ and, for $C\subset\mathbbm{R}^d$, let
$N(C):=\sum_{l=k+1}^n\mathbbm{1}_{[X_l\cap C\neq\emptyset]}$
be the number of grains in $\{X_{k+1},\dots,X_n\}$ hitting $C$. For each non-empty $I\subset I(L)$, we fix some $z(I)\in I$. As in the proof of \cite[Proposition 22.4]{last_penrose}, the inclusion-exclusion formula yields
\begin{align}
\mathbbm{E}\left|V_l\left(L\cap \bigcup_{l=k+1}^nX_l\right)\right|^m\leq&\mathbbm{E}\Bigg|\sum_{I\subset I(L):I\neq \emptyset}\mathbbm{1}\left[\bigcap_{z\in I}Q_z\neq \emptyset\right]2^{N(Q_{Z(I)})}\overline{V}(Q_0)\Bigg|^m\label{penrose1}
\end{align}
Using \cite[(22.26)]{last_penrose} and the display directly below it, we obtain that
\begin{align}
\text{card}\bigg\{I\subset I(L): I\neq\emptyset,\bigcap_{z\in I}Q_z\neq\emptyset\bigg\}\leq 2^{2^d}\sum_{l=0}^d\kappa_{d-l}d^{(d-l)/2}V_l(L)\leq 2^{2^d}d^{d/2}\overline{V}(L).\label{penrose2}
\end{align}
Now, for any $z\in\mathbbm{R}^d$, let $Q_{R,z}=[z-1/2-R,z+1/2+R]^d$. For any $l=k+1,\dots,n$, let $U_l$ be the centre of $X_l$. Note that, for $n$ large enough so that $Q_{R,z}\subset E_n$,
\begin{align}
\mathbbm{E}\left[2^{mN(Q_z)}\right]\leq\mathbbm{E}\left[2^{m\sum_{l=k+1}^n\mathbbm{1}_{[U_l\in Q_{R,z}]}}\right]=\left(\mathbbm{E}\left[2^{m\mathbbm{1}_{[U_1\in Q_{R,z}]}}\right]\right)^{n-k}\leq \left(1+2^{m}\mathbbm{E}\left[\mathbbm{1}_{[U_1\in Q_{R,z}]}\right]\right)^n&\leq \left(1+2^{m}\frac{V_d(Q_{R,z})}{n}\right)^n\notag\\
&\hspace{-1.1cm}\leq \exp\left(2^m(2R+1)^d\right).\label{penrose3}
\end{align}
The result now follows from (\ref{penrose1})-(\ref{penrose3}) and the fact that $\overline{V}(Q_0)=\sum_{l=0}^d\kappa_{d-l}{d\choose l}\leq \sum_{l=0}^d\left(\sqrt{2\pi}\right)^{d-l}{d\choose l}\leq 4^d$ (see \cite[Example 1.3]{tropp} and \cite[page 224-227]{san}).
\end{proof}
\subsection{Proof of Theorem \ref{pos_def_teorem}}\label{pos_def_proof}
\subsubsection{Introduction}\label{intro_cov}
Let $\eta$ be the stationary Poisson process with the intensity measure
$\Lambda(\cdot)=\int\mathbbm{1}_{[K+x\in\cdot]}dx$ and let $Z\equiv Z(\eta)$ be given by $Z:=\bigcup_{L\in\eta}L$. For $i=0,\dots,d$, let $V_i^*(L):=\mathbbm{E}V_i(Z\cap L)-V_i(L)$, for $L\in\mathcal{K}^d$. It follows from \cite[Theorem 6.4, (5.2), Theorem 3.1]{hug_last_schulte} that
\begin{align*}
\Sigma_{i,j}=\sum_{k=2}^{\infty}\frac{1}{k!}\int_{\mathbbm{R}^d}\dots\int_{\mathbbm{R}^d}V_i^*\left(K\cap(K+x_2)\cap\dots\cap(K+x_k)\right)V_j^*\left(K\cap(K+x_2)\cap\dots\cap(K+x_k)\right)dx_2\dots dx_k
\end{align*}
and the infinite sum in the definition of $\Sigma$ converges by \cite[Theorem 3.1]{hug_last_schulte}. Therefore, $\Sigma$ is well-defined.

Now, for every vector $a=(a_0,\dots,a_d)^T\in\mathbbm{R}^{d+1}$,
\begin{align*}
a^T\Sigma a=\sum_{k=2}^{\infty}\frac{1}{k!}\int_{\mathbbm{R}^d}\dots\int_{\mathbbm{R}^d}\left(\sum_{l=0}^da_lV_l^*\left(K\cap(K+x_2)\cap\dots\cap(K+x_k)\right)\right)^2dx_2\dots dx_k.
\end{align*}
In order to prove positive-definiteness of $\Sigma$, it suffices to show that the summand obtained for $k=d+1$ is strictly positive. This is shown in the proof of \cite[Theorem 4.1]{hug_last_schulte} so we readily obtain that $\Sigma$ is positive-definite.

Next, we prove lemmas that will help us bound the distance between $\Sigma_n$ and $\Sigma$.
\subsubsection{Auxiliary lemmas}
Now, we prove the first part of the bound on the rate of convergence to the limiting covariance.
\begin{lemma}\label{lemma_cov1}
For $i=0,\dots,d$, $k=1,2,\dots$, let
$$\psi^i_k(x_1,\dots,x_k):=\mathbbm{E}\left[V_i\left(F_n^{(1,2,\dots,k)}\cap x_1\cap\dots\cap x_k\right)\right]-V_i\left(x_1\cap\dots\cap x_k\right).$$
If $n>\overline{V}(K)e$, then
\begin{align*}
&\left|\text{Cov}\left((f_i(X),f_j(X)\right)-\frac{1}{n}\sum_{k=1}^n{n\choose k}\mathbbm{E}\left[\psi^i_k(X_1,\dots,X_k)\psi^j_k(X_1,\dots,X_k)\right]+\mathbbm{E}\left[\psi^i_1(X_1)\right]\mathbbm{E}\left[\psi^j_1(X_1)\right]\right|\\
\leq &(d!)^2\left(1+e^{\overline{V}(K)}\right)^2\left(\overline{V}(K)^2e^{\overline{V}(K)+3\overline{V}(K)^2}+e^{\overline{V}(K)^2}\right)n^{-1}.
\end{align*}
\end{lemma}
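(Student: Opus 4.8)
The plan is to recognise the right-hand side as an approximation to the Hoeffding decomposition of $\mathrm{Cov}(V_i(F_n),V_j(F_n))$, in which the first-order ($k=1$) term reproduces the right-hand side exactly while all higher-order terms turn out to carry an extra factor $n^{-1}$. First I would use that $f_i(X)=n^{-1/2}(V_i(F_n)-\mathbbm{E}V_i(F_n))$, so that $\mathrm{Cov}(f_i(X),f_j(X))=n^{-1}\mathrm{Cov}(V_i(F_n),V_j(F_n))$, and that $V_i(F_n)$ is square-integrable (indeed bounded: by inclusion--exclusion $|V_i(F_n)|\le(2^n-1)\,d!\,\overline{V}(K)$, using $V_i(L)\le d!\,\overline{V}(L)\le d!\,\overline{V}(K)$ for $L\in\mathcal{K}^d$ contained in a translate of $K$). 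Since $X_1,\dots,X_n$ are i.i.d. and $V_i(F_n)$ is a symmetric function of them, the Hoeffding (ANOVA) decomposition gives
\begin{equation*}
\mathrm{Cov}(V_i(F_n),V_j(F_n))=\sum_{k=1}^n\binom nk\,\mathbbm{E}\big[g^i_k(X_1,\dots,X_k)\,g^j_k(X_1,\dots,X_k)\big],\qquad g^i_k(x_1,\dots,x_k):=\sum_{S\subseteq[k]}(-1)^{k-|S|}\,\mathbbm{E}\big[V_i(F_n)\,\big|\,(X_m)_{m\in S}=(x_m)_{m\in S}\big].
\end{equation*}

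Next, using the additivity of $V_i$ together with inclusion--exclusion, and writing $F_n^{(S)}:=\bigcup_{l\notin S}X_l$ (a union of $n-|S|$ i.i.d. grains), one obtains
\begin{equation*}
\mathbbm{E}\big[V_i(F_n)\,\big|\,(X_m)_{m\in S}=(x_m)_{m\in S}\big]=\mathbbm{E}\,V_i\big(F_n^{(S)}\big)-\sum_{\emptyset\neq S'\subseteq S}(-1)^{|S'|+1}\Big(\mathbbm{E}\,V_i\big(F_n^{(S)}\cap\bigcap_{m\in S'}x_m\big)-V_i\big(\bigcap_{m\in S'}x_m\big)\Big).
\end{equation*}
Inserting this into the definition of $g^i_k$ and collapsing the alternating sums over $S\supseteq S'$ (which vanish unless $S'=[k]$, since $\sum_{r=0}^{m}\binom mr(-1)^r=0$ for $m\geq1$), I expect to arrive at the identity
\begin{equation*}
g^i_k=(-1)^k\psi^i_k+e^i_k,\qquad e^i_k:=C^i_k-(-1)^{k+1}\sum_{\emptyset\neq S'\subsetneq[k]}\big(\widehat\Delta^{\,k-|S'|}\beta^i\big)\big((x_m)_{m\in S'}\big),
\end{equation*}
where $C^i_k$ is the $k$-th forward finite difference of the sequence $s\mapsto\mathbbm{E}\,V_i(F_n^{(S)})$, $|S|=s$, and $\widehat\Delta^{\,\ell}\beta^i$ is the $\ell$-th forward finite difference, in the number of background grains, of $m\mapsto\mathbbm{E}\,V_i(G_m\cap y_1\cap\dots\cap y_{s'})$ with $G_m$ a union of $m$ i.i.d. grains. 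For $k=1$ one checks directly that $g^i_1(x_1)=\mathbbm{E}\psi^i_1(X_1)-\psi^i_1(x_1)$ and $e^i_1=C^i_1=\mathbbm{E}\,V_i(F_n^{(1)})-\mathbbm{E}\,V_i(F_n)=\mathbbm{E}\psi^i_1(X_1)$, so $n^{-1}\binom n1\mathbbm{E}[g^i_1g^j_1]=\mathbbm{E}[\psi^i_1\psi^j_1]-\mathbbm{E}[\psi^i_1]\mathbbm{E}[\psi^j_1]$ matches the $k=1$ contribution of the target expression exactly; hence it remains to bound
\begin{equation*}
n^{-1}\sum_{k=2}^n\binom nk\big(\mathbbm{E}[g^i_kg^j_k]-\mathbbm{E}[\psi^i_k\psi^j_k]\big)=n^{-1}\sum_{k=2}^n\binom nk\Big((-1)^k\big(\mathbbm{E}[\psi^i_ke^j_k]+\mathbbm{E}[e^i_k\psi^j_k]\big)+\mathbbm{E}[e^i_ke^j_k]\Big).
\end{equation*}

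The remaining work is a sequence of elementary estimates, all resting on the additivity of $V_i$, the monotonicity and translation invariance of $\overline{V}$, the bound $V_i(L)\le d!\,\overline{V}(L)$ on $\mathcal{K}^d$, Lemma \ref{integral_lemma} (to integrate out one uniform germ at a time) and Lemma \ref{penrose_lemma}, part A (to absorb the background union). The three key inputs are: (i) $|\psi^i_k(x_1,\dots,x_k)|\le d!(1+e^{\overline{V}(K)})\,\overline{V}(x_1\cap\dots\cap x_k)$, hence after integration $\mathbbm{E}|\psi^i_k(X_1,\dots,X_k)|\le d!(1+e^{\overline{V}(K)})\,\overline{V}(K)^k\,n^{-(k-1)}$; (ii) a single finite difference in the number of background grains costs a factor of order $\overline{V}(K)/n$, because the summand that does not depend on that number (namely $V_i(K)$, resp. the $\mathbbm{E}\,V_i((C_1+K)\cap\cdot)$ term) is annihilated by the difference, whence by induction $|C^i_k|\le d!(1+e^{\overline{V}(K)})\,\overline{V}(K)^k\,n^{-(k-1)}$ and $\big|(\widehat\Delta^{\,k-s'}\beta^i)((x_m)_{m\in S'})\big|\le d!(1+e^{\overline{V}(K)})\,\overline{V}(K)^{k-s'}\,\overline{V}(\bigcap_{m\in S'}x_m)\,n^{-(k-s')}$; (iii) $\mathbbm{E}\big[\overline{V}(\bigcap_{m\in[k]}X_m)\,\overline{V}(\bigcap_{m\in S'}X_m)\big]\le\overline{V}(K)^{k+1}\,n^{-(k-1)}$. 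Plugging (i)--(iii) into the last display and using the Cauchy--Schwarz inequality, each summand is at most $\binom nk/n\le n^{k-1}/k!$ times a quantity carrying at least one surplus factor $n^{-1}$; the resulting series in $k$ (and the sums over the subsets $S'\subsetneq[k]$) are dominated by the factorials, converge, and add up to a bound of the announced form $(d!)^2(1+e^{\overline{V}(K)})^2\big(\overline{V}(K)^2e^{\overline{V}(K)+3\overline{V}(K)^2}+e^{\overline{V}(K)^2}\big)n^{-1}$, the hypothesis $n>\overline{V}(K)e$ being used to keep the geometric-type ratios below $1$ and to recover the stated numerical constant.

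The main obstacle is point (ii) together with this bookkeeping: one has to verify that an $\ell$-fold finite difference in the number of germs genuinely produces $n^{-\ell}$, not merely $n^{-1}$, so that it is not overwhelmed by the $n^{k-1}$ coming from $\binom nk$; the single power of $n$ that survives this cancellation is exactly the $n^{-1}$ in the conclusion. Keeping the explicit constant under control through the iterated applications of Lemmas \ref{integral_lemma} and \ref{penrose_lemma} and through the double summation is the other laborious, though routine, part.
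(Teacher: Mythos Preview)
Your approach is essentially the same as the paper's, though organized differently. The paper also uses the Hoeffding decomposition (via \cite[Theorem 2.2]{peccati_lachieze-rey2017}), writing each component as $\sqrt{n}\,\phi_k^i=\sum_{T\subseteq[k]}(-1)^{|T|}\varphi_T^i$ with $\varphi_T^i(x)=\mathbbm{E}\big[V_i\big(\bigcap_{t\in T}x_t\cap X_1\cap\dots\cap X_{k-|T|}\big)\big]-\mathbbm{E}\big[V_i\big(F_n^{(1,\dots,k)}\cap\bigcap_{t\in T}x_t\cap X_1\cap\dots\cap X_{k-|T|}\big)\big]$; the term $T=[k]$ gives $-\psi_k^i$, and the others are the error. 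Your finite differences $(\widehat\Delta^{\,k-|S'|}\beta^i)((x_m)_{m\in S'})$ are, up to sign, exactly these $\varphi_{S'}^i$: iterating $\widehat\Delta$ once replaces the intersection $L$ by $X'\cap L$ with a fresh grain $X'$, so after $k-|S'|$ steps one recovers the same integral that defines $\varphi_{S'}^i$. Likewise your constant $C_k^i$ equals $\pm\varphi_\emptyset^i$. Thus your decomposition $g_k^i=(-1)^k\psi_k^i+e_k^i$ is literally the paper's $\sqrt{n}\,\phi_k^i$ split into $T=[k]$ versus $T\subsetneq[k]$.

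The only real difference is in how the cross terms are grouped: the paper expands $\phi_k^i\phi_k^j$ as a double sum over $(T,S)$ and splits into the cases $T\cap S=\emptyset$ (where a telescoping of signs leaves a single term) and $T\cap S\neq\emptyset$, $|T\cap S|<k$; you instead bound $\psi_k^ie_k^j+e_k^i\psi_k^j+e_k^ie_k^j$. Both routes rest on Lemmas~\ref{integral_lemma} and~\ref{penrose_lemma} in the same way, and your key point (ii)---that an $\ell$-fold difference in the number of background grains costs $n^{-\ell}$---is exactly what makes the paper's $\varphi_T^i$ small. One cosmetic remark: your parenthetical ``which vanish unless $S'=[k]$'' applies only to the $V_i(\bigcap x_m)$ part of the conditional expectation; the $\mathbbm{E}V_i(F_n^{(S)}\cap\cdot)$ part does not vanish but becomes the finite difference, as you then correctly write. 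Recovering the \emph{exact} numerical constant of the lemma through your $\psi e+e\psi+ee$ organization would require tracking the combinatorics more carefully than your sketch does, but the order and the structural constants $(d!)^2(1+e^{\overline{V}(K)})^2$ come out the same.
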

\begin{proof}
Let $[k]=\{1,\dots,k\}$ and for all $i=0,\dots,d$ and $k=1,\dots,n$, let
\begin{align}
\phi_k^{i}(x_1,\dots,x_k)
=&\frac{1}{\sqrt{n}}\sum_{m=0}^k(-1)^m\underset{|T|=m}{\sum_{T\subset[k]}}\varphi^i_{T}(x_1,\dots,x_k),\label{covariance2}
\end{align}
where, for $T\subset[k]$, such that $|T|=m$ and $T=\{t_1,\dots,t_m\}$,
\begin{align*}
\varphi^i_{T}(x_1,\dots,x_k):=&\mathbbm{E}\left[V_i\left( x_{t_1}\cap x_{t_2}\cap\dots \cap x_{t_m}\cap X_1\cap\dots\cap X_{k-m}\right)\right]\\
&-\mathbbm{E}\left[V_i\left(F_n^{(1,2,\dots, k)}\cap x_{t_1}\cap x_{t_2}\cap\dots \cap x_{t_m}\cap X_1\cap\dots\cap X_{k-m}\right)\right]. 
\end{align*}
Note that $\psi_k^i(x_1,\dots,x_k)=-\varphi^i_{[k]}(x_1,\dots,x_k)$.
Using \cite[Theorem 2.2]{peccati_lachieze-rey2017}, we have that
\begin{align}
\text{Cov}\left((f_i(X),f_j(X)\right)=&\sum_{k,l=1}^n\sum_{1\leq r_1<\dots<r_k\leq n}\sum_{1\leq s_1<\dots<s_l\leq n}(-1)^{k+l}\mathbbm{E}\left[\phi_k^i(X_{r_1},\dots,X_{r_k})\phi_l^j(X_{s_1},\dots,X_{s_l})\right]\notag\\
=&\sum_{k=1}^n{n\choose k}\mathbbm{E}\left[\phi_k^i(X_{1},\dots,X_{k})\phi_k^j(X_{1},\dots,X_{k})\right].\label{covariance3}
\end{align}
More precisely, identity (\ref{covariance3}) follows from the fact that, for $L\in\mathcal{K}^d$ and for all $i=0,\dots,d$ and $k,j_1,\dots,j_k=1,\dots,n$,
\begin{align*}
&V_i\left((F_n^{(j_1,\dots,j_k)}\cup X_{j_k})\cap L\right)-V_i\left((F_n^{(j_1,\dots,j_k)}\cup X_{j_k}')\cap L)\right)\\
&=V_i(X_{j_k}\cap L)-V_i(F_n^{(j_1,\dots,j_k)}\cap X_{j_k}\cap L)-V_i(X_{j_k}'\cap L)+V_i(F_n^{(j_1,\dots,j_k)}\cap X_{j_k}'\cap L)
\end{align*}
and, as a consequence, using the notation of \cite{peccati_lachieze-rey2017},
\begin{align*}
\Delta_{1}\Delta_{2}\dots\Delta_{k}f_i(X)=&\frac{(-1)^{k+1}}{\sqrt{n}}\Bigg[V_i\left( X_1\cap\dots\cap X_{k}\right)-V_i\left(F_n^{(1,2,\dots, k)} \cap X_1\cap\dots\cap X_{k}\right)\Bigg]\\
&+\sum_{m=1}^k\frac{(-1)^{m+k+1}}{\sqrt{n}}\sum_{1\leq t_1<\dots<t_m\leq k}\Bigg[V_i\left( X'_{t_1}\cap X'_{t_2}\cap\dots \cap X'_{t_m}\cap X_1\cap\dots\cap X_{k-m}\right)\\
&\hspace{3cm}-V_i\left(F_n^{(1,2,\dots, k)}\cap X'_{t_1}\cap X'_{t_2}\cap\dots \cap X'_{t_m}\cap X_1\cap\dots\cap X_{k-m}\right)\Bigg].
\end{align*}
Now, for $m_1,m_2=0,\dots,k$, $m_1\geq m_2$, $i,j=0,\dots,d$, $T=\lbrace t_1,\dots,t_{m_1}\rbrace\subset [k]$,  $S=\lbrace s_1,\dots,s_{m_2}\rbrace\subset [k]$, such that $|T|=m_1$, $|S|=m_2$,
\begin{align*}
&\frac{1}{n}\left|\mathbbm{E}\varphi^i_{T}(X_1,\dots,X_k)\varphi^j_{S}(X_1,\dots,X_k)\right|\\
=&\frac{1}{n}\Bigg|\mathbbm{E}\bigg\{\left[V_i\left(X_{t_1}\cap\dots\cap X_{t_{m_1}}\cap X_{1}'\cap\dots\cap X_{k-m_1}'\right)-V_i\left(F_n^{(1,2,\dots, k)}(X')\cap X_1\cap\dots\cap X_{t_{m_1}}\cap X_{1}'\cap\dots\cap X_{k-m_1}'\right)\right]\\
&\cdot\left[V_j\left(X_{s_1}\cap\dots\cap X_{s_{m_2}}\cap \tilde{X}_{1}\cap\dots\cap \tilde{X}_{k-m_2}\right)-V_j\left(F_n^{(1,2,\dots, k)}(\tilde{X})\cap X_{s_1}\cap\dots\cap X_{s_{m_2}}\cap \tilde{X}_{1}\cap\dots\cap \tilde{X}_{k-m_2}\right)\right]\bigg\}\Bigg|\\
\stackrel{\text{Lemma }\ref{penrose_lemma}}\leq &\frac{(d!)^2\left(1+e^{\overline{V}(K)}\right)^2}{n}\mathbb{E}\left[\overline{V}\left(X_{t_1}\cap\dots\cap X_{t_{m_1}}\cap X_{1}'\cap\dots\cap X_{k-m_1}'\right)\overline{V}\left(X_{s_1}\cap\dots\cap X_{s_{m_2}}\cap \tilde{X}_{1}\cap\dots\cap \tilde{X}_{k-m_2}\right)\right]\\
\stackrel{\text{Lemma }\ref{integral_lemma}}\leq &(d!)^2\left(1+e^{\overline{V}(K)}\right)^2
\Bigg(\frac{1}{n^{2k-1}}\overline{V}(K)^{2k}\mathbbm{1}_{[T\cap S=\emptyset]}+\frac{1}{n^{2k-|T\cap S|}}\left(\overline{V}(K)^2\right)^{k-|T\cap S|/2+1}\mathbbm{1}_{[T\cap S\neq\emptyset]}\Bigg).
\end{align*}
Now, we consider the case where $T\cap S\neq\emptyset$ and ($T\neq [k]$ or $S\neq [k]$). In the expression below, $l=|T\cap S|$. Note that, using the above calculation and Lemma \ref{integral_lemma},
\begin{align}
&\frac{1}{n}\sum_{k=1}^n{n\choose k}\sum_{m_1,m_2=0}^k\underset{|S\cap T|\not\in\lbrace 0,k\rbrace}{\underset{|T|=m_1,|S|=m_2}{\sum_{T,S\subset[k]}}}\left|\mathbbm{E}\varphi^i_{T}(X_1,\dots,X_k)\varphi^j_{S}(X_1,\dots,X_k)\right|\nonumber\\
\leq&(d!)^2\left(1+e^{\overline{V}(K)}\right)^2\sum_{k=1}^n{n\choose k}\sum_{l=1}^{k-1} {k\choose l}\sum_{r_1=0}^{k-l}{k-l\choose r_1}\sum_{r_2=0}^{k-l-r_1}{k-l-r_1\choose r_2}\frac{\left(\overline{V}(K)^2\right)^{k-l/2+1}}{n^{2k-l}}\nonumber\\
=&(d!)^2\left(1+e^{\overline{V}(K)}\right)^2\sum_{k=1}^n{n\choose k}\sum_{l=1}^{k-1} {k\choose l}\sum_{r_1=0}^{k-l}{k-l\choose r_1}2^{k-l-r_1}\frac{\left(\overline{V}(K)^2\right)^{k-l/2+1}}{n^{2k-l}}\nonumber\\
=&(d!)^2\left(1+e^{\overline{V}(K)}\right)^2\sum_{k=1}^n{n\choose k}\sum_{l=1}^{k-1} {k\choose l}3^{k-l}\frac{\left(\overline{V}(K)^2\right)^{k-l/2+1}}{n^{2k-l}}\nonumber\\
\leq&(d!)^2\left(1+e^{\overline{V}(K)}\right)^2\overline{V}(K)^2\sum_{k=1}^n\frac{n^k}{k!}\left(3\overline{V}(K)^2\right)^k\sum_{l=1}^{k-1} {k\choose l}\frac{1}{n^{2k-l}}\left(3\overline{V}(K)\right)^{-l}\nonumber\\
\leq & \frac{(d!)^2\left(1+e^{\overline{V}(K)}\right)^2\overline{V}(K)^2}{n}\sum_{k=1}^n\frac{1}{k!}\left(3\overline{V}(K)^2\right)^k\left(\left(3\overline{V}(K)\right)^{-1}+1\right)^k\nonumber\\
\leq &(d!)^2\overline{V}(K)^2\left(1+e^{\overline{V}(K)}\right)^2e^{\overline{V}(K)+3\overline{V}(K)^2}n^{-1}.
\label{covariance5}
\end{align}
Also, note that, in the case $T\cap S=\emptyset$, for $k\geq 2$, 
\begin{align}
&\frac{1}{n}\sum_{k=2}^n{n\choose k}\Bigg|\underset{T\cap S=\emptyset}{\sum_{T,S\subset[k]}}\mathbbm{E}\left[\varphi^i_{T}(X_1,\dots,X_k)\varphi^j_{S}(X_1,\dots,X_k)\right]\Bigg|\notag\\
=&\frac{1}{n}\sum_{k=2}^n{n\choose k}\Bigg|\underset{T\cap S=\emptyset}{\sum_{T,S\subset[k]}}(-1)^{|T|+|S|}\mathbb{E}\left[V_i(F_n^{(1,2,\dots,k)}\cap X_1\cap\dots\cap X_k)-V_i(X_1\cap\dots\cap X_k)\right]\notag\\
&\phantom{..............................}\cdot\mathbb{E}\left[V_j(F_n^{(1,2,\dots,k)}\cap X_1\cap\dots\cap X_k)-V_j(X_1\cap\dots\cap X_k)\right]\Bigg|\notag\\
=&\frac{1}{n}\sum_{k=2}^n{n\choose k}\Bigg|\sum_{t=0}^k(-1)^t{k\choose t}\sum_{s=0}^{k-t}(-1)^s{k-t\choose s}\mathbb{E}\left[V_i(F_n^{(1,2,\dots,k)}\cap X_1\cap\dots\cap X_k)-V_i(X_1\cap\dots\cap X_k)\right]\notag\\
&\phantom{..............................}\cdot\mathbb{E}\left[V_j(F_n^{(1,2,\dots,k)}\cap X_1\cap\dots\cap X_k)-V_j(X_1\cap\dots\cap X_k)\right]\Bigg|\notag\\
=&\frac{1}{n}\sum_{k=2}^n{n\choose k}\Bigg|\sum_{t=0}^k(-1)^t{k\choose t}\mathbbm{1}{[k=t]}\mathbb{E}\left[V_i(F_n^{(1,2,\dots,k)}\cap X_1\cap\dots\cap X_k)-V_i(X_1\cap\dots\cap X_k)\right]\notag\\
&\phantom{..............................}\cdot\mathbb{E}\left[V_j(F_n^{(1,2,\dots,k)}\cap X_1\cap\dots\cap X_k)-V_j(X_1\cap\dots\cap X_k)\right]\Bigg|\notag\\
=&\frac{1}{n}\sum_{k=2}^n{n\choose k}\Bigg|(-1)^k\mathbb{E}\left[V_i(F_n^{(1,2,\dots,k)}\cap X_1\cap\dots\cap X_k\cap E_n)-V_i(X_1\cap\dots\cap X_k)\right]\notag\\
&\phantom{..............................}\cdot\mathbb{E}\left[V_j(F_n^{(1,2,\dots,k)}\cap X_1\cap\dots\cap X_k)-V_j(X_1\cap\dots\cap X_k)\right]\Bigg|\notag\\
\leq&\sum_{k=2}^n\frac{(d!)^2\left(1+e^{\overline{V}(K)}\right)^2}{n^{2k-1}}{n\choose k}\overline{V}(K)^{2k}\nonumber\\
\leq &(d!)^2\left(1+e^{\overline{V}(K)}\right)^2\sum_{k=2}^n\frac{\overline{V}(K)^{2k}}{n^{k-1}k!}\nonumber\\
\leq&(d!)^2\left(1+e^{\overline{V}(K)}\right)^2e^{\overline{V}(K)^2}n^{-1}.
\label{covariance5.5}
\end{align}
where we use Lemmas \ref{integral_lemma} and \ref{penrose_lemma} in the last three inequalities.
The result now follows from (\ref{covariance2})---(\ref{covariance5.5}).
\end{proof}
Now, we show the second part of the bound on the rate of convergence to the limiting covariance.
\begin{lemma}\label{lemma_cov2}
Using the notation of Lemma \ref{lemma_cov1}, we have that, for all $i,j=0,\dots,d$ and $\Sigma$ defined in (\ref{matrix_sigma}),
\begin{align*}
&\left|\frac{1}{n}\sum_{k=1}^n{n\choose k}\mathbbm{E}\left[\psi_k^i(X_1,\dots,X_k)\psi_k^j(X_1,\dots,X_k)\right]-\mathbbm{E}\left[\psi_1^i(X_1)\right]\mathbbm{E}\left[\psi_1^j(X_1)\right]-\Sigma_{i,j}\right|\\
\leq& (19+8(4R+1)^d)(d!)^2\max(1,\overline{V}(K)^3)\left(e^{\overline{V}(K)}+1\right)^2e^{\max(1,2\overline{V}(K))}n^{-1/d}.
\end{align*}
\end{lemma}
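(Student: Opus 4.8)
The plan is to write the left-hand side of the asserted bound as
\[
\mathrm{Cov}\big(\psi_1^i(X_1),\psi_1^j(X_1)\big)+\Big(\tfrac1n\sum_{k=2}^n\binom nk\mathbbm E\big[\psi_k^i(X_1,\dots,X_k)\,\psi_k^j(X_1,\dots,X_k)\big]-\Sigma_{i,j}\Big),
\]
using that the $k=1$ summand of $\tfrac1n\sum_k\binom nk\mathbbm E[\psi_k^i\psi_k^j]$ equals $\mathbbm E[\psi_1^i(X_1)\psi_1^j(X_1)]$, and to bound each of the two pieces by a constant of the announced form times $n^{-1/d}$. Two facts will be used throughout. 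First, a deterministic bound: by the inclusion--exclusion formula, the estimate $V_i(L)\le d!\,\overline{V}(L)$ on $\mathcal K^d$, and the argument in the proof of Lemma~\ref{penrose_lemma}(A), one has $|\psi_k^i(x_1,\dots,x_k)|\le d!(1+e^{\overline{V}(K)})\,\overline{V}(x_1\cap\dots\cap x_k)$, which vanishes unless $x_1\cap\dots\cap x_k\ne\emptyset$; the analogous Poisson estimate, via the multivariate Mecke formula, gives $|V_i^*(L)|\le d!(1+e^{\overline{V}(K)})\,\overline{V}(L)$. Second, localisation: writing $E_n^{\ominus}$ for $E_n$ eroded by the closed ball $B(0,2R)$ --- a sub-cube of side $n^{1/d}-4R$ --- for $c_1\in E_n^{\ominus}$ only grains centred in $B(c_1,2R)\subset E_n$ can meet $(c_1+K)\cap\dots\cap(c_k+K)=c_1+L_k$, where $L_k=L_k(x_2,\dots,x_k):=\bigcap_{l=1}^k(x_l+K)$ with $x_1:=0$, $x_l:=c_l-c_1$, and (since $K=-K$) their number and relative positions form a $\mathrm{Bin}(n-k,V_d(L_k\oplus K)/n)$ collection of i.i.d.\ uniform points on $L_k\oplus K$. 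Hence after translating by $-c_1$ the value $\psi_k^i(c_1+K,c_1+x_2+K,\dots,c_1+x_k+K)$ equals a $c_1$-free function $\widehat\psi_k^i(x_2,\dots,x_k)$; in particular $\psi_1^i$ equals a constant, say $c_n^i$, on $E_n^{\ominus}$.

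For the covariance term, since $\psi_1^i=c_n^i$ on $E_n^{\ominus}$ one has $\mathrm{Cov}(\psi_1^i(X_1),\psi_1^j(X_1))=\mathrm{Cov}\big((\psi_1^i-c_n^i)\mathbbm 1_{\{C_1\notin E_n^{\ominus}\}},(\psi_1^j-c_n^j)\mathbbm 1_{\{C_1\notin E_n^{\ominus}\}}\big)$, which by the deterministic bound and $\mathbbm P[C_1\notin E_n^{\ominus}]=1-(1-4Rn^{-1/d})^d\le 4dR\,n^{-1/d}$ is at most a constant times $n^{-1/d}$. For the series, I would change variables $x_l=c_l-c_1$ to get $\tfrac1n\binom nk\mathbbm E[\psi_k^i\psi_k^j]=\tfrac{\binom nk}{n^{k+1}}\int_{E_n}\int_{(E_n-c_1)^{k-1}}\psi^i_{k,c_1}\psi^j_{k,c_1}\,dx\,dc_1$ and split $\int_{E_n}dc_1$ over the bulk $E_n^{\ominus}$ and the boundary layer. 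The boundary part is controlled by the deterministic bound, the estimate $\int_{(\mathbbm R^d)^{k-1}}\overline{V}(L_k(x))\,dx\le\overline{V}(K)^{k}$ (iterate Lemma~\ref{integral_lemma}), and $V_d(E_n\setminus E_n^{\ominus})\le 4dR\,n^{1-1/d}$, giving a bound $\le\tfrac1{k!}C\,\overline{V}(K)^{k+1}n^{-1/d}$. On the bulk $\psi^i_{k,c_1}=\widehat\psi_k^i$ and the integrand is supported on $B(0,2R)^{k-1}\subset(E_n-c_1)^{k-1}$, so the inner integral equals $I_k:=\int_{(\mathbbm R^d)^{k-1}}\widehat\psi_k^i\widehat\psi_k^j\,dx$, independent of $c_1$; the bulk contribution is $\tfrac1{k!}\big(\prod_{j=0}^{k-1}(1-\tfrac jn)\big)\tfrac{(n^{1/d}-4R)^d}{n}I_k$, to be compared with the target summand $\tfrac1{k!}I_k^{\infty}$, $I_k^{\infty}:=\int_{(\mathbbm R^d)^{k-1}}V_i^*(L_k)V_j^*(L_k)\,dx$. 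The combinatorial prefactor differs from $1$ by at most $(k^2/2+4dR)n^{-1/d}$, contributing an error $\le\tfrac1{k!}(k^2/2+4dR)|I_k|\,n^{-1/d}$ with $|I_k|\le(d!(1+e^{\overline{V}(K)}))^2\overline{V}(K)^{k+1}$.

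The crux is the binomial--Poisson comparison of $I_k$ with $I_k^{\infty}$, which I would carry out by matching inclusion--exclusion expansions rather than by coupling (a coupling only gives an $n^{-1/2}$ rate, too weak when $d=1$). Expanding $V_i(\cdot\cap L_k)$ for the binomial union of $n-k$ grains and for the Poisson set $Z$, and taking expectations --- the Poisson case via the multivariate Mecke formula, the interchange of sum and expectation justified by $\sum_{m\ge1}\tfrac1{m!}\int_{(\mathbbm R^d)^m}|V_i(\bigcap_{l=1}^m(z_l+K)\cap L_k)|\,dz\le d!\,e^{\overline{V}(K)}\overline{V}(L_k)<\infty$ --- one obtains
\[
\widehat\psi_k^i(x)-V_i^*(L_k)=\sum_{m\ge1}(-1)^{m+1}\Big(\tfrac{\binom{n-k}m}{n^m}-\tfrac1{m!}\Big)\mathcal V_m^i(L_k),\qquad \mathcal V_m^i(L):=\int_{(\mathbbm R^d)^m}V_i\Big(\bigcap_{l=1}^m(z_l+K)\cap L\Big)\,dz.
\]
Since $\tfrac{\binom{n-k}m}{n^m}=\tfrac1{m!}\prod_{j=0}^{m-1}(1-\tfrac{k+j}n)$, for $m\le n-k$ we have $\big|\tfrac{\binom{n-k}m}{n^m}-\tfrac1{m!}\big|\le\tfrac1{m!}\tfrac{mk+m^2/2}{n}$, while the terms $m>n-k$ form a Poisson-type tail that is super-exponentially small for $n>\overline{V}(K)e$ by Stirling's bound; together with $|\mathcal V_m^i(L)|\le d!\,\overline{V}(K)^m\overline{V}(L)$ (iterate Lemma~\ref{integral_lemma}) this gives $|\widehat\psi_k^i(x)-V_i^*(L_k)|\le\tfrac{d!}{n}\overline{V}(L_k)\,\overline{V}(K)e^{\overline{V}(K)}(k+\overline{V}(K))+(\text{negligible})$. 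Writing $\widehat\psi_k^i\widehat\psi_k^j-V_i^*V_j^*=(\widehat\psi_k^i-V_i^*)\widehat\psi_k^j+V_i^*(\widehat\psi_k^j-V_j^*)$ and using the deterministic bounds together with $\int_{(\mathbbm R^d)^{k-1}}\overline{V}(L_k)^2\,dx\le\overline{V}(K)^{k+1}$ then gives $|I_k-I_k^{\infty}|=O(\overline{V}(K)^{k+2}k/n)$.

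Finally I would sum the three error contributions --- boundary, combinatorial prefactor, binomial--Poisson --- over $2\le k\le n$, using $\sum_k\overline{V}(K)^k/k!=e^{\overline{V}(K)}$ and its $k$- and $k^2$-weighted analogues, and add the remaining Poisson tail $\sum_{k>n}\tfrac1{k!}|I_k^{\infty}|$, which is super-exponentially small for $n>\overline{V}(K)e$ (Stirling again, with $|I_k^{\infty}|\le(d!(1+e^{\overline{V}(K)}))^2\overline{V}(K)^{k+1}$). Collecting constants --- noting that $4dR\le(4R+1)^d$ and $\overline{V}(K)\ge1$ --- yields the stated inequality. The main obstacle will be the clean $O(1/n)$ control of $\widehat\psi_k^i-V_i^*(L_k)$ by coefficient matching, and the rigorous verification that $\psi_k^i$ coincides with the germ-independent $\widehat\psi_k^i$ on $E_n^{\ominus}$; once these are in place, the remaining estimates are bookkeeping.
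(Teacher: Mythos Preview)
Your approach is correct and takes a genuinely different route from the paper's. Both proofs rely on inclusion--exclusion, the coefficient estimate $\big|\binom{n-k}{m}/n^m-1/m!\big|=O((mk+m^2)/n)$, and iterated use of Lemma~\ref{integral_lemma}, but they organise the passage from $E_n$ to $\mathbb{R}^d$ differently. The paper first expands $\psi_k^i$ by inclusion--exclusion as $\sum_l \frac{(-1)^{l-1}\binom{n-k}{l}}{n^l}\int_{E_n^l}V_i(\cdot)$, replaces each $\int_{E_n^l}$ by $\int_{(\mathbb{R}^d)^l}$ with an $n^{-1/d}$ boundary error, and then invokes Hadwiger's integral-geometric theorem to evaluate $\int_{(\mathbb{R}^d)^l}V_i\big((K+x_1)\cap\dots\cap(K+x_l)\cap L\big)\,dx$ as an explicit polynomial $\sum_s\tilde P_{s,l,i}(d)V_s(L)$; summing the resulting series over $l$ produces the coefficients $P_{i,s}(d)$ of \eqref{matrix_sigma}. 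You instead handle the $E_n$-versus-$\mathbb{R}^d$ discrepancy only at the level of the outer germ $c_1$, exploiting translation invariance: on the bulk $E_n^{\ominus}$ the conditional law of $F_n^{(1,\dots,k)}\cap(c_1+L_k)$ depends on $c_1$ only through a rigid translation, so $\psi_k^i$ is $c_1$-free there, and the remaining $(k-1)$-fold integral extends to $(\mathbb{R}^d)^{k-1}$ for free because the integrand is supported in $B(0,2R)^{k-1}$. This lets you bypass Hadwiger entirely and compare $\widehat\psi_k^i$ directly to $V_i^*(L_k)$ via the Poisson Mecke expansion, using the representation $\Sigma_{i,j}=\sum_{k\ge 2}\frac{1}{k!}\int V_i^*V_j^*$ already recorded in Section~\ref{intro_cov}. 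Your route is more probabilistic and arguably more elementary; the paper's route has the advantage of making the explicit polynomial structure of $\Sigma$ transparent and of connecting directly to the form \eqref{matrix_sigma}.

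One small slip in the final bookkeeping: the claim $\overline V(K)\ge 1$ is false in general, since $\overline V(K)=\kappa_d(1+R)^d$ and $\kappa_d\to 0$ as $d\to\infty$; this is exactly why the stated bound carries the factor $\max(1,\overline V(K)^3)$. It affects only the collection of constants at the very end, not the argument itself.
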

\begin{proof}
Let $E_{\left(n^{1/d}+4R\right)^d}\subseteq\mathbbm{R}^d$ be the cube with volume $\left(n^{1/d}+4R\right)^d$, centred at the origin.  Note that, for any $l\in\mathbbm{N}$, $i=0,\dots,d$,
\begin{align}
&\frac{1}{n}\left|\int_{E_n}\dots\int_{E_n}V_i\left((K+x_1)\cap\dots\cap(K+x_l)\right)dx_1\dots dx_l-\int_{E_n}\int_{\mathbbm{R}^d}\dots\int_{\mathbbm{R}^d}V_i\left((K+x_1)\cap\dots\cap(K+x_l)\right)dx_1\dots dx_l\right|\notag\\
\leq &\frac{1}{n}\sum_{m=1}^{l-1}\underbrace{\int_{E_n}\dots\int_{E_n}}_{m\text{ times}}\underbrace{\int_{\mathbbm{R}^d}\dots\int_{\mathbbm{R}^d}}_{(l-m-1)\text{ times}}\int_{\mathbbm{R}^d\setminus E_n}V_i\left((K+x_1)\cap\dots\cap(K+x_l)\right)dx_1\dots dx_l\notag\\
\leq &\frac{l-1}{n}\int_{\mathbbm{R}^d}\int_{\mathbbm{R}^d}\dots\int_{\mathbbm{R}^d}\int_{E_{\left(n^{1/d}+4R\right)^d}\setminus E_n}V_i\left((K+x_1)\cap\dots\cap(K+x_l)\right)dx_1\dots dx_l\notag\\
\stackrel{(*)}=&\frac{l-1}{n}\int_{\mathbbm{R}^d}\int_{\mathbbm{R}^d}\dots\int_{\mathbbm{R}^d}V_i\left((K+x_1)\cap\dots\cap(K+x_{l-1})\cap K\right)dx_1\dots dx_{l-1}\cdot\int_{E_{\left(n^{1/d}+4R\right)^d}\setminus E_n}dx_l\notag\\
\stackrel{(**)}\leq &\frac{(l-1)d!}{n}\left(\left(n^{1/d}+4R\right)^d-n\right)\overline{V}(K)^l\notag\\
\leq &\frac{(l-1)(d!)(4R+1)^d}{n^{1/d}}\overline{V}(K)^l,\label{integral_bound}
\end{align}
where we have used translation invariance of $V_i$ and of the Lebesgue measure on $\mathbbm{R}^d$ in $(*)$ and Lemma \ref{integral_lemma} in $(**)$.

Now, we use Fubini's theorem and Hadwiger's general integral geometric theorem \cite[Theorem 5.1.2]{geometry} in a manner analogous to the one in which it was done in the first display on \cite[page 387]{geometry}. We note that $K$ stays invariant under rotations about the origin. We further use the formula of the second display on \cite[page 388]{geometry} (which is derived from the Crofton formula \cite[(5.6)]{geometry}). In total, letting $c_{j}^m=\frac{m!\kappa_m}{j!\kappa_j}$, $m,j\in\{0,\dots,d\}$ and $(V_i)_m=c_i^{d-m}c_d^{m+i}V_{m+i}\mathbbm{1}_{[m+i\leq d]}$, $i,m\in\{0,\dots,d\}$, we obtain that
\begin{align}
&\int_{\mathbbm{R}^d}\dots\int_{\mathbbm{R}^d}V_i\left((K+x_1)\cap\dots\cap(K+x_l)\cap L\right)dx_1\dots dx_l\notag
=\underset{r_0+\dots+r_l=ld}{\sum_{0\leq r_0,\dots r_l\leq d}}c_{d-r_0}^d\left[(V_i)_{r_0}(L)\right]\prod_{m=1}^lc_d^{r_m}V_{r_m}(K)\notag\\
&\hspace{5cm}=\sum_{r_0=0}^d\underset{r_1+\dots+r_l=ld-r_0}{\sum_{0\leq r_1,\dots,r_l\leq d}}c_{d-r_0}^dc_i^{d-r_0}c_d^{r_0+i}V_{r_0+i}(L)\mathbbm{1}_{[r_0+i\leq d]}\prod_{m=1}^lc_d^{r_m}V_{r_m}(K)\notag\\
&\hspace{4.7cm}\stackrel{s:=r_0+i}=\sum_{s=i}^d\tilde{P}_{s,l,i}(d)V_{s}(L).\label{p_identity}
\end{align}
for
\begin{align*}
\tilde{P}_{s,l,i}(d)
=&\underset{r_1+\dots+r_l=ld-s+i}{\sum_{0\leq r_1,\dots,r_l\leq d}}\frac{s!\kappa_{s}}{i!\kappa_{i}}\prod_{m=1}^l\frac{r_m!\kappa_{r_m}}{d!\kappa_d}V_{r_m}(K).
\end{align*}

Furthermore, using the inclusion-exclusion principle, for any $L\in\mathcal{K}^d$,
\begin{align}\label{inclusion_exclusion}
\mathbbm{E}\left[V_i\left(F_n^{(1,\dots,k)}\cap L\right)\right]
=&\sum_{l=1}^{n-k}\frac{ {n-k\choose l}}{n^l}(-1)^{l-1}\int_{E_n}\dots\int_{E_n}V_i\left((K+x_1)\cap\dots\cap(K+x_l)\cap L\right)dx_1\dots dx_l.
\end{align}

It follows that
\begin{align}
&\Bigg|\frac{1}{n}\int_{E_n}\dots\int_{E_n}\left[\mathbbm{E}V_i\left(F_n^{(1,\dots,k)}\cap (K+x_1)\cap\dots\cap (K+x_k)\right)-V_i((K+x_1)\cap\dots\cap (K+x_k))\right]\notag\\
&\hspace{2cm}\cdot\left[\mathbbm{E}V_j\left(F_n^{(1,\dots,k)}\cap (K+x_1)\cap\dots\cap (K+x_k)\right)-V_j((K+x_1)\cap\dots\cap (K+x_k))\right]dx_1\dots dx_k\notag\\
&-\int_{\mathbbm{R}^d}\dots\int_{\mathbbm{R}^d}\bigg[\sum_{l=1}^{n-k}\frac{(-1)^{l-1}{n-k\choose l}}{n^l}\sum_{s=i}^d\tilde{P}_{s,l,i}(d)V_{s}(K\cap(K+x_2)\cap\dots\cap (K+x_k))-V_i(K\cap(K+x_2)\dots\cap (K+x_k))\bigg]\notag\\
&\cdot\bigg[\sum_{l=1}^{n-k}\frac{(-1)^{l-1}{n-k\choose l}}{n^l}\sum_{s=j}^d\tilde{P}_{s,l,j}(d)V_{s}(K\cap(K+x_2)\cap\dots\cap (K+x_k))-V_j(K\cap(K+x_2)\cap\dots\cap (K+x_k))\bigg]dx_2\dots dx_k \Bigg|\notag\\
=&\frac{1}{n}\Bigg|\int_{E_n}\dots\int_{E_n}\left[\mathbbm{E}V_i\left(F_n^{(1,\dots,k)}\cap (K+x_1)\cap\dots\cap (K+x_k)\right)-V_i((K+x_1)\cap\dots\cap (K+x_k))\right]\notag\\
&\hspace{3cm}\cdot\left[\mathbbm{E}V_j\left(F_n^{(1,\dots,k)}\cap (K+x_1)\cap\dots\cap (K+x_k)\right)-V_j((K+x_1)\cap\dots\cap (K+x_k))\right]dx_1\dots dx_k\notag\\
&-\int_{E_n}\int_{\mathbbm{R}^d}\dots\int_{\mathbbm{R}^d}\bigg[\sum_{l=1}^{n-k}\frac{(-1)^{l-1}{n-k\choose l}}{n^l}\sum_{s=i}^d\tilde{P}_{s,l,i}(d)V_{s}((K+x_1)\cap\dots\cap (K+x_k))-V_i((K+x_1)\cap\dots\cap (K+x_k))\bigg]\notag\\
&\hspace{1cm}\cdot\bigg[\sum_{l=1}^{n-k}\frac{(-1)^{l-1}{n-k\choose l}}{n^l}\sum_{s=j}^d\tilde{P}_{s,l,j}(d)V_{s}((K+x_1)\cap\dots\cap (K+x_k))-V_j((K+x_1)\cap\dots\cap (K+x_k))\bigg]dx_1\dots dx_k \Bigg|\notag\\
\leq &2d!\left(\sum_{l=1}^{n-k}\frac{{n-k\choose l}}{n^l}\overline{V}(K)^{l+1}+\overline{V}(K)\right)\left(\sum_{l=1}^{n-k}\frac{{n-k\choose l}}{n^l}\frac{(l+k-1)(d!)(4R+1)^d}{n^{1/d}}\overline{V}(K)^{k+l}+\frac{(k-1)(d!)(4R+1)^d}{n^{1/d}}\overline{V}(K)^{k}\right)\notag\\
\leq &2(d!)^2(4R+1)^d\overline{V}(K)^{k+1}\left(e^{\overline{V}(K)}+1\right)\left((\overline{V}(K)+k-1)e^{\overline{V}(K)}+k-1\right)n^{-1/d},\label{covariance7}
\end{align}
where the first identity holds by the translation invariance of $V_i$ and the Lebesgue measure on $\mathbbm{R}^d$ and the second to last inequality follows from (\ref{inclusion_exclusion}),  (\ref{p_identity}),  (\ref{integral_bound}) and Lemma \ref{integral_lemma}.

Now, we use the trick applied in the last display on \cite[page 387]{geometry}, in the derivation of \cite[Theorem 9.1.3]{geometry}. In the calculation below, $t$ will denote the number of indices $r_1,\dots,r_l$ which are smaller than $d$ and $r=l-t$. We obtain a result analogous to that of the third display on \cite[page 388]{geometry}, namely
\begin{align}
&\sum_{l=1}^{\infty}\frac{(-1)^{l-1}}{l!}\sum_{s=i}^d\tilde{P}_{s,l,i}(d)V_s(L)-V_i(L)\notag\\
=&\sum_{l=1}^{\infty}\frac{(-1)^{l-1}}{l!}\sum_{s=i}^dV_s(L)\underset{r_1+\dots+r_l=ld-s+i}{\sum_{0\leq r_1,\dots,r_l\leq d}}\frac{s!\kappa_{s}}{i!\kappa_{i}}\prod_{m=1}^l\frac{r_m!\kappa_{r_m}}{d!\kappa_d}V_{r_m}(K)-V_i(L)\notag\\
=& \sum_{s=i}^d\sum_{l=1}^{\infty}\frac{(-1)^{l-1}}{l!}V_i(L)\left[\mathbbm{1}_{[s=i]}V_d(L)^l+\mathbbm{1}_{[s\neq i]}V_s(L)\sum_{t=1}^{s-i}{l\choose t}V_d(K)^{l-t}\underset{r_1+\dots+r_t=td-s+i}{\sum_{0\leq r_1,\dots,r_t\leq d-1}}\frac{s!\kappa_s}{i!\kappa_i}\prod_{m=1}^t\frac{r_m!\kappa_{r_m}}{d!\kappa_d}V_{r_m}(K)\right]-V_i(L)\notag
\\
=&-e^{-V_d(K)}V_i(L)-\mathbbm{1}_{[i\neq d]}\sum_{s=i+1}^dV_s(L)\sum_{r=0}^\infty V_d(K)^r\sum_{t=1}^{s-i}\frac{(-1)^r}{(r+t)!}{r+t\choose t}(-1)^t\underset{r_1+\dots+r_t=td-s+i}{\sum_{0\leq r_1,\dots,r_t\leq d-1}}\frac{s!\kappa_s}{i!\kappa_i}\prod_{m=1}^t\frac{r_m!\kappa_{r_m}}{d!\kappa_d}V_{r_m}(K)\notag\\
=&-\sum_{s=i}^dV_s(L)P_{i,s}(d),\label{pp_identity}
\end{align}
for $$P_{i,s}(d)=e^{-V_d(K)}\Bigg[\mathbbm{1}_{[s=i]}+\mathbbm{1}_{[i\neq d]}\frac{s!\kappa_s}{i!\kappa_i}\sum_{t=1}^{s-i}\frac{(-1)^t}{t!}\underset{r_1+\dots+r_t=td+i-s}{\sum_{i\leq r_1,\dots,r_t\leq d-1}}\prod_{m=1}^t\frac{r_m!\kappa_{r_m}}{d!\kappa_d}V_{r_m}(K)\Bigg].$$
It follows that
\begin{align}
&\Bigg|\int_{\mathbbm{R}^d}\dots\int_{\mathbbm{R}^d}\bigg[\sum_{l=1}^{n-k}\frac{(-1)^{l-1}{n-k\choose l}}{n^l}\sum_{s=i}^d\tilde{P}_{s,l,i}(d)V_{s}(K\cap(K+x_2)\cap\dots\cap (K+x_k))-V_i(K\cap(K+x_2)\cap\dots\cap (K+x_k))\bigg]\notag\\
&\cdot\bigg[\sum_{l=1}^{n-k}\frac{(-1)^{l-1}{n-k\choose l}}{n^l}\sum_{s=j}^d\tilde{P}_{s,l,j}(d)V_{s}(K\cap(K+x_2)\cap\dots\cap (K+x_k))-V_j(K\cap(K+x_2)\cap\dots\cap (K+x_k))\bigg]dx_2\dots dx_k\notag \\
&-\sum_{s=i}^d\sum_{r=j}^d\int_{\mathbbm{R}^d}\dots\int_{\mathbbm{R}^d}P_{i,s}(d)P_{j,r}(d)V_{s}(K\cap(K+x_2)\dots\cap (K+x_k))V_{r}(K\cap(K+x_2)\dots\cap (K+x_k))dx_2\dots dx_k\Bigg|\notag\\
\stackrel{(\dagger)}\leq &(d!)^2\bigg[\sum_{l=1}^{\infty}\frac{\overline{V}(K)^{l+1}}{l!}+\sum_{l=1}^{n-k}\frac{{n-k\choose l}}{n^l}\overline{V}(K)^{l+1}+2\overline{V}(K)\bigg]\notag\\
&\cdot\int_{\mathbbm{R}^d}\dots\int_{\mathbbm{R}^d}\bigg[\left(\sum_{l=0}^{\infty}\frac{\overline{V}(K)^{l}}{l!}-\sum_{l=0}^{n-k}\overline{V}(K)^{l}\frac{{n-k\choose l}}{n^l}\right)\overline{V}(K\cap(K+x_2)\cap\dots\cap(K+x_k))\bigg]dx_2\dots dx_k\notag\\
\stackrel{(\ddagger)}\leq &2(d!)^2\overline{V}(K)^{k+1}\left(e^{\overline{V}(K)}+1\right)\left(e^{\overline{V}(K)}-\left(1+\frac{\overline{V}(K)}{n}\right)^{n-k}\right)\notag\\
\leq &2(d!)^2\overline{V}(K)^{k+1}\left(e^{\overline{V}(K)}+1\right)\Bigg[\left(e^{\overline{V}(K)}-\left(1+\frac{\overline{V}(K)}{n}\right)^{n}\right)+\left(\left(1+\frac{\overline{V}(K)}{n}\right)^{n}-\left(1+\frac{\overline{V}(K)}{n}\right)^{n-k}\right)\Bigg]\notag\\
\leq &2(d!)^2\overline{V}(K)^{k+1}\left(e^{\overline{V}(K)}+1\right)\Bigg[\sum_{l=n+1}^\infty\frac{\overline{V}(K)^l}{l!}+\sum_{r=1}^n\left(\frac{\overline{V}(K)^r}{r!}-\frac{\overline{V}(K)^r{n\choose r}}{n^r}\right)\notag\\
&\hspace{7cm}+\sum_{m=1}^k\left(\left(1+\frac{\overline{V}(K)}{n}\right)^{n-m+1}-\left(1+\frac{\overline{V}(K)}{n}\right)^{n-m}\right)\Bigg]\notag\\
\stackrel{(\mathparagraph)}\leq &2(d!)^2\overline{V}(K)^{k+2}\left(e^{\overline{V}(K)}+1\right)\Bigg[\frac{e}{n}+\frac{2e^{2\overline{V}(K)}}{n}+k\frac{e^{\overline{V}(K)}}{n}\Bigg],\label{covariance8}
\end{align}
where inequality $(\dagger)$ follows from (\ref{p_identity}), (\ref{pp_identity}) and Lemma \ref{integral_lemma};  inequality $(\ddagger)$ follows from Lemma \ref{integral_lemma}; and inequality $(\mathparagraph)$ follows from the well-known bound $\sum_{k=B}^{\infty}\frac{A^k}{k!}\leq \left(\frac{Ae}{B}\right)^B$, for $B>A>0$, our assumption $n>\overline{V}(K)e$ and the fact that, for $r,n\in\mathbbm{N}\setminus\{ 0\}$, $r\leq n$,
\begin{align}\label{factorial_bound}
\left|\frac{1}{r!}-\frac{{n\choose r}}{n^r}\right|=\left|\frac{n^r-n(n-1)\dots(n-r+1)}{n^rr!}\right|\leq\frac{n^r-(n-r+1)^r}{n^rr!}\leq\frac{1}{r!}\sum_{l=0}^{r-1}{r\choose l}\frac{r^{r-l}}{n^{r-l}}\leq \frac{1}{(r-1)!n}\sum_{l=0}^{r-1}{r\choose l}\leq\frac{2^r}{n(r-1)!}.
\end{align}

Now, using (\ref{factorial_bound}), (\ref{p_identity}), (\ref{pp_identity}), (\ref{inclusion_exclusion}) and Lemmas \ref{integral_lemma} and \ref{penrose_lemma},
\begin{align}
&\sum_{k=1}^n\left|\frac{{n\choose k}}{n^k}-\frac{1}{k!}\right|\int_{\mathbbm{R}^d}\dots\int_{\mathbbm{R}^d}\sum_{s=i}^d\sum_{r=j}^dP_{i,s}(d)P_{j,r}(d)V_s(K\cap(K+x_2)\cap\dots\cap(K+x_k))\notag\\
&\hspace{6cm}\cdot V_r(K\cap(K+x_2)\cap\dots\cap(K+x_k))dx_2\dots dx_k\notag\\
\leq &\sum_{k=1}^n\frac{2^k}{(k-1)!}(d!)^2\overline{V}(K)^{k+1}\left(e^{\overline{V}(K)}+1\right)^2\frac{1}{n}\notag\\
\leq &2\overline{V}(K)^2\left(e^{\overline{V}(K)}+1\right)^2(d!)^2e^{2\overline{V}(K)}n^{-1}.\label{covariance9}
\end{align}
Similarly,
\begin{align}
&\sum_{k=n+1}^{\infty}\frac{1}{k!}\int_{\mathbbm{R}^d}\dots\int_{\mathbbm{R}^d}\sum_{s=i}^d\sum_{r=j}^dP_{i,s}(d)P_{j,r}(d)V_s(K\cap(K+x_2)\cap\dots\cap(K+x_k))\notag\\
&\hspace{6cm}V_r(K\cap(K+x_2)\cap\dots\cap(K+x_k))dx_2\dots dx_k\notag\\
\leq &\sum_{k=n+1}^{\infty}\frac{1}{k!}(d!)^2\overline{V}(K)^{k+1}\left(e^{\overline{V}(K)}+1\right)^2
\leq \frac{\overline{V}(K)^2e(d!)^2}{n}\left(e^{\overline{V}(K)}+1\right)^2.\label{covariance10}
\end{align}
Using (\ref{covariance7}), (\ref{covariance8}), (\ref{covariance9}), (\ref{covariance10}), we obtain
\begin{align}
&\Bigg|\frac{1}{n}\sum_{k=1}^n{n\choose k}\mathbbm{E}\left[\psi_k^i(X_1,\dots,X_k)\psi_k^j(X_1,\dots,X_k)\right]-\sum_{k=1}^{\infty}\frac{1}{k!}\sum_{s=i}^d\sum_{r=j}^d\int_{\mathbbm{R}^d}\dots\int_{\mathbbm{R}^d}P_{i,s}(d)P_{j,r}(d)\notag\\
&\hspace{3cm}\cdot V_s(K\cap(K+x_2)\cap\dots\cap(K+x_k))V_r(K\cap(K+x_2)\cap\dots\cap(K+x_k))dx_2\dots dx_k\Bigg|\notag\\
\leq &(d!)^2\left(e^{\overline{V}(K)}+1\right)^2\left(e^{2\overline{V}(K)}\left(6\overline{V}(K)^2+2\overline{V}(K)^3\right)+3\overline{V}(K)^2e+6(4R+1)^d\overline{V}(K)^2e^{\overline{V}(K)}\right)n^{-1/d}.\label{covariance13}
\end{align}
Now, using (\ref{integral_bound}), Lemma \ref{integral_lemma}, and the translation invariance of the Lebesgue measure, we obtain
\begin{align}
&\Bigg|\left[\sum_{l=1}^{n-1}\frac{(-1)^l}{n^{l+1}}{n-1\choose l}\int_{E_n}\dots\int_{E_n}V_i((K+x_1)\cap\dots(K+x_{l+1}))dx_1\dots dx_{l+1}-V_i(K)\right]\notag\\
&\cdot\left[\sum_{l=1}^{n-1}\frac{(-1)^l}{n^{l+1}}{n-1\choose l}\int_{E_n}\dots\int_{E_n}V_i((K+x_1)\cap\dots(K+x_{l+1}))dx_1\dots dx_{l+1}-V_i(K)\right]\notag\\
&-\left[\sum_{l=1}^{n-1}\frac{(-1)^l}{n^{l}}{n-1\choose l}\int_{\mathbbm{R}^d}\dots\int_{\mathbbm{R}^d}V_i(K\cap(K+x_2)\cap\dots(K+x_{l+1}))dx_2\dots dx_{l+1}-V_i(K)\right]\notag\\
&\cdot\left[\sum_{l=1}^{n-1}\frac{(-1)^l}{n^{l}}{n-1\choose l}\int_{\mathbbm{R}^d}\dots\int_{\mathbbm{R}^d}V_i(K\cap(K+x_2)\cap\dots(K+x_{l+1}))dx_2\dots dx_{l+1}-V_i(K)\right]\Bigg|\notag\\
\leq &2(d!)\left[\sum_{l=1}^{n-1}\frac{{n-1\choose l}}{n^l}\overline{V}(K)^{l+1}+\overline{V}(K)\right]\left[\sum_{l=1}^{n-1}\frac{{n-1\choose l}}{n^l}\frac{l(d!)(4R+1)^d}{n^{1/d}}\overline{V}(K)^l\right]\notag\\
\leq& 2(d!)^2\overline{V}(K)^2\left(e^{\overline{V}(K)}+1\right)(4R+1)^de^{\overline{V}(K)}n^{-1/d}.\label{covariance11}
\end{align}
Similarly, using (\ref{p_identity}), (\ref{pp_identity}) and Lemma \ref{integral_lemma}, in a manner similar to the calculation (\ref{covariance8}),
\begin{align}
&\Bigg|\left[\sum_{l=1}^{n-1}\frac{(-1)^l}{n^{l+1}}{n-1\choose l}\int_{\mathbbm{R}^d}\dots\int_{\mathbbm{R}^d}V_i(K\cap(K+x_2)\cap\dots(K+x_{l+1}))dx_2\dots dx_{l+1}-V_i(K)\right]\notag\\
&\cdot\left[\sum_{l=1}^{n-1}\frac{(-1)^l}{n^{l}}{n-1\choose l}\int_{\mathbbm{R}^d}\dots\int_{\mathbbm{R}^d}V_i(K\cap(K+x_2)\cap\dots(K+x_{l+1}))dx_2\dots dx_{l+1}-V_i(K)\right]\notag\\
&-\sum_{s=i}^d\sum_{r=j}^dP_{i,s}(d)P_{j,r}(d)V_{s}(K)V_{r}(K)\Bigg|\notag\\
\leq &2(d!)^2\overline{V}(K)^3\left(e^{\overline{V}(K)}+1\right)\frac{e+2e^{2\overline{V}(K)}+e^{\overline{V}(K)}}{n}.\label{covariance12}
\end{align}
From (\ref{covariance11}) and (\ref{covariance12}) we obtain that
\begin{align}
&\Bigg|\mathbbm{E}\left[\psi_1^i(X_1)\right]\mathbbm{E}\left[\psi_1^j(X_1)\right]-\sum_{s=i}^d\sum_{r=j}^dP_{i,s}(d)P_{j,r}(d)V_{s}(K)V_{r}(K)\Bigg|\notag\\
\leq& 2(d!)^2\overline{V}(K)^2\left(e^{\overline{V}(K)}+1\right)\left[(4R+1)^de^{\overline{V}(K)}+\overline{V}(K)\left(e+2e^{2\overline{V}(K)}+e^{\overline{V}(K)}\right)\right]n^{-1/d}\label{covariance14}
\end{align}
and the final bound follows from (\ref{covariance13}) and (\ref{covariance14}) and the fact that $\overline{V}(K)=\sum_{j=0}^d{d\choose j}\kappa_dR^j\leq  3^d(R+1)^d$.
\end{proof}

\subsubsection{Concluding argument}
The proof of Theorem \ref{pos_def_teorem} is finished by applying the discussion of Section \ref{intro_cov}, a direct application of Lemmas \ref{lemma_cov1} and \ref{lemma_cov2} and the fact that $\overline{V}(K)=\sum_{j=0}^d{d\choose j}\kappa_dR^j\leq  3^d(R+1)^d$ (see \cite[Example 1.2]{tropp} and \cite[page 224-227]{san}).\qed

\subsection{Proof of Theorem \ref{theorem_covering}}
 We will say that $Y=(Y_1,\dots,Y_n)$ is a recombination of $X,X',\tilde{X}$ if $Y_i\in\lbrace X_i,X_i',\tilde{X}_i\rbrace$, for $i\in\left[n\right]$. In what follows, $Y,\bar{Y},\hat{Y},\tilde{Y},Y',\bar{Y}',\hat{Y}',\tilde{Y}',Z,\bar{Z},\hat{Z},Z',\bar{Z}',\hat{Z}'$ will denote recombinations of $X,X',\tilde{X}$ and all the suprema in this section will be taken over all such recombinations. First, we prove a useful estimate:

\subsubsection{Auxiliary lemma}
\begin{lemma}\label{lemma_estimates}
For all $m,m_1,m_2,m_3\geq 1$, we have
\begin{align*}
&\mathbbm{E}\left[\|\Delta_1f(X)\|^m\right]
\leq\frac{C_m^{(1)}(d)}{n^{m/2}};\qquad \sup_{(Y,Z)}\mathbbm{E}\left[\overline{V}(Y_1\cap Y_2)\|\Delta_1f(Z)\|^{m}\right]\leq \frac{C_m^{(2)}(d)}{n^{m/2+1}};\\
&\sup_{(Y,Z)}\mathbbm{E}\bigg[\|\Delta_2f(Y)\|^{m_1}\|\tilde{\Delta}_1\Delta_2f(Z)\|^{m_2}\bigg]\leq \frac{C_{m_1,m_2}^{(3)}(d)}{n^{(m_1+m_2)/2+1}};\\ 
&\sup_{(Y,Y',Z,Z')}\mathbbm{E}\bigg[\|\Delta_2f(Y)\|^{m_1}\|\Delta_3f(Y')\|^{m_1}\|\tilde{\Delta}_1\Delta_2f(Z)\|^{m_2}\|\tilde{\Delta}_1\Delta_3f(Z')\|^{m_2}\bigg]\leq\frac{C_{m_1,m_2}^{(4)}(d)}{n^{m_1+m_2+2}};\\
&\sup_{(Y,Y',Z,Z')}\mathbbm{E}\bigg[\overline{V}(Y_1\cap Y_2)\overline{V}(Z_1\cap Z_3)\|\Delta_2f(Y')\|^{m}\|\Delta_3f(Z')\|^{m}\bigg]\leq \frac{C_{m}^{(5)}(d)}{n^{m+2}},\\
&\max\Bigg\{\hspace{-1mm}\sup_{(Y,Y',Z)}\hspace{-2mm}\mathbbm{E}\bigg[\|\tilde{\Delta}_1f(Y)\|^{m_1}\|\Delta_2f(Y')\|^{m_2}\|\tilde{\Delta}_1\Delta_2f(Z)\|^{m_3}\bigg],\hspace{-1mm}\sup_{(Y,Y',Z)}\hspace{-2mm}\mathbbm{E}\bigg[\|\Delta_1f(Y)\|^{m_1}\|\Delta_2f(Y')\|^{m_2}\|\tilde{\Delta}_1\Delta_2f(Z)\|^{m_3}\bigg]\Bigg\}\\
&\hspace{14cm}\leq \frac{C_{m_1,m_2,m_3}^{(6)}(d)}{n^{(m_1+m_2+m_3)/2+1}};\\ 
&\sup_{(Y,Y',Z)}\mathbbm{E}\bigg[\overline{V}(Y_1\cap Y_2)\|\Delta_1f(Y')\|^{m}\|\Delta_2f(Z)\|^{m}\bigg]\leq \frac{C_{2m}^{(2)}(d)}{n^{m+1}},
\end{align*}
for
\begin{align*}
&C_m^{(1)}(d):=(2d+4)^{m}(d!)^m\left(2^{2^d}d^{d/2}12^d(R+1)^{d}\right)^m e^{2^m(2R+1)^d};\\
&C_m^{(2)}(d):=(4d+8)^{m}(d!)^m\left(2^{2^d+2d}d^{d/2}\right)^m\left(3(R+1)\right)^{d(m+2)} e^{2^m(2R+1)^d};\\
&C_{m_1,m_2}^{(3)}(d):=(4d+8)^{m_1+m_2}(d!)^{m_1+m_2}\left(2^{2^d+2d}d^{d/2}\right)^{m_1+m_2} (3(R+1))^{m_1+m_2+1}e^{(2^{2m_1-1}+2^{2m_2-1})(2R+1)^d};\\
&C_{m_1,m_2}^{(4)}(d)\hspace{-1mm}:=\hspace{-1mm}(8d+16)^{2m_1+2m_2}(d!)^{2m_1+2m_2}\hspace{-1mm}\left(2^{2^d}d^{d/2}12^{d}(R+1)^d\right)^{2m_1+2m_2} (3(R+1))^{2d}e^{(2^{4m_1-1}+2^{4m_2-1})(2R+1)^d};\\
&C_m^{(5)}(d):=(8d+16)^{2m}(d!)^{2m}\left(2^{2^d+2d}d^{d/2}\right)^{2m} (3(R+1))^{d(2m+4)}e^{2^{2m}(2R+1)^d};\\
&C_{m_1,m_2,m_3}^{(6)}(d)\hspace{-1mm}:=\hspace{-1mm}(4d+8)^{m_1+m_2+m_3}(d!)^{m_1+m_2+m_3}\hspace{-1mm}\left(2^{2^d}d^{d/2}12^{d}(R+1)^d\right)^{m_1+m_2+m_3}\hspace{-2mm}(3(R+1))^{d} e^{\frac{1}{3}(2^{3m_1}+2^{3m_2}+2^{3m_3})(2R+1)^d}.
\end{align*}
\end{lemma}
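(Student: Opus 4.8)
The plan is to reduce every bound in the statement to two elementary ingredients: the per-grain moment estimates provided by Lemma~\ref{penrose_lemma}, and volumetric estimates for the probability that two or three grains overlap. The starting point is to make the difference operators explicit. Using additivity, $V_i(A\cup B)=V_i(A)+V_i(B)-V_i(A\cap B)$, and translation invariance, $V_i(C_j+K)=V_i(K)$, one gets
$$\Delta_j f_i(X)=\frac{1}{\sqrt n}\Big(V_i\big(F_n^{(j)}\cap(C_j+K)\big)-V_i\big(F_n^{(j)}\cap(C'_j+K)\big)\Big),$$
and, applying $\tilde\Delta_i$, the set $F_n^{(j)}$ decomposes as $(X_i\cap X_j)\cup(F_n^{(i,j)}\cap X_j)$, so that $\tilde\Delta_i\Delta_j f_\bullet(X)$ telescopes into a signed sum of intrinsic volumes of convex-ring sets each contained either in $X_i\cap X_j$ or in $\tilde X_i\cap X_j$. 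Bounding $|V_i(L)|\le d!\,\overline{V}(L)$ on convex $L$ (understanding $\overline{V}$ of a convex-ring set through its inclusion--exclusion expansion) and summing over coordinates, one arrives at pointwise inequalities of the schematic form
$$\|\Delta_j f(X)\|\le \frac{c(d)}{\sqrt n}\Big(\overline{V}(K)+\overline{V}\big(X_j\cap F_n^{(j)}\big)\Big),\qquad \|\tilde\Delta_i\Delta_j f(X)\|\le \frac{c(d)}{\sqrt n}\,\mathbbm{1}_{\mathcal{E}_{ij}}\Big(\overline{V}(K)+\overline{V}\big(X_j\cap F_n^{(i,j)}\big)\Big),$$
where $\mathcal{E}_{ij}:=\{X_i\cap X_j\ne\emptyset\}\cup\{\tilde X_i\cap X_j\ne\emptyset\}$, the constant $c(d)$ is of order $(d+1)\,d!$, and $\overline{V}(K)\le 3^d(R+1)^d$. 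The same computation applies verbatim when $X$ is replaced by any recombination $Y$ of $\{X,X',\tilde X\}$, and the operators $\tilde\Delta_1 f$ fit the pattern of $\Delta_j f$.

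The second ingredient is purely probabilistic. Since the germs are i.i.d.\ uniform on the cube $E_n$ of volume $n$, one has $\mathbbm{P}(X_i\cap X_j\ne\emptyset)=\mathbbm{P}(\|C_i-C_j\|\le 2R)\le \kappa_d(2R)^d/n$, and, conditionally on the germs attached to grain $i$, the events $\{X_i\cap X_j\ne\emptyset\}$, $j\ne i$, are mutually independent because they depend on disjoint families of germs. Consequently $\mathbbm{P}(\mathcal{E}_{ij}\cap\mathcal{E}_{ik}\mid C_i,\tilde C_i)\le\big(2\kappa_d(2R)^d/n\big)^2$ for distinct $j,k$, which is exactly where the extra factor $n^{-2}$ in the fourth and fifth estimates will come from.

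To assemble each of the seven bounds, I would condition on the germs of the bounded set of grains that occur in more than one factor --- grain~$1$ together with its copies $X_1',\tilde X_1$ in all cases, and grain~$2$ as well in the first two estimates --- and then apply Lemma~\ref{penrose_lemma} to the conditional moments of the remaining $\overline{V}$-factors, which now involve only the $n-O(1)$ still-random grains. The key observation is that Lemma~\ref{penrose_lemma}, being stated for a fixed compact convex $L$, is essentially unaffected by this conditioning: freezing a bounded number of the grains in the relevant union only multiplies its conclusion by a factor $2^{O(m)}$ (each fixed grain contributes at most $2^m$ to the quantity $\mathbbm{E}[2^{mN(\cdot)}]$ in the proof of part~B), which is absorbed into the constants. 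After taking conditional expectations, each left-hand side is bounded by a product of (a) the deterministic prefactors $n^{-1/2}$ from the normalisation of $f$, (b) one or two interaction probabilities $O(n^{-1})$ coming from the indicators $\mathbbm{1}_{\mathcal{E}_{ij}}$ (or from the factors $\overline{V}(Y_1\cap Y_2)$, which vanish off such an event and are $\le\overline{V}(K)$ on it), and (c) uniform constants of the form $(2^{2^d+2d}d^{d/2})^{m}e^{2^m(2R+1)^d}\,\overline{V}(K)^m$ supplied by Lemma~\ref{penrose_lemma}. Matching the powers of $n$ produces the exponents stated in each line, and collecting the constants --- using $\overline{V}(K)\le 3^d(R+1)^d$, $V_i\le d!\,\overline{V}$, the crude bound $\kappa_d(2R)^d\le(\text{const})^d(R+1)^d$, and the identity $2^{2^d+2d}d^{d/2}\,\overline{V}(K)\le 2^{2^d}d^{d/2}12^d(R+1)^d$ --- yields $C^{(1)}_m,\dots,C^{(6)}_m$.

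There is no single hard idea here; the difficulty is organisational. The two genuinely non-routine points are: (1) writing the second-order operators $\tilde\Delta_i\Delta_j f$ as telescoping sums of intrinsic volumes of explicit convex-ring sets, so that the forcing event $\mathcal{E}_{ij}$ is made manifest; and (2) arranging the conditioning so that each forcing event yields a clean factor $n^{-1}$ rather than the $n^{-1/2}$ one would get by a naive Cauchy--Schwarz across the indicator --- this requires conditioning on precisely the germs shared by the interacting grains, and then using both the conditional independence from the second step and the robustness of Lemma~\ref{penrose_lemma} under freezing a few grains. Everything else is the lengthy but mechanical propagation of the dimensional constants through the seven cases.
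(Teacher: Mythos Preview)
Your proposal is correct and follows essentially the same route as the paper: both expand the difference operators via additivity of the intrinsic volumes into sums of $|V_l(\cdot)|$'s of explicit convex-ring sets (the paper's displays (\ref{add_one1})--(\ref{add_one5}) are precisely your telescoping step, including the appearance of the forcing event $\mathcal{E}_{ij}$ in (\ref{add_one 4})), and then invoke Lemma~\ref{penrose_lemma}~B, applied conditionally on the handful of ``special'' grains, for the moment bounds. The only cosmetic difference is in how the extra factors $n^{-1}$ are extracted: you phrase this as a probability bound $\mathbbm{P}(X_i\cap X_j\ne\emptyset)=O(n^{-1})$ combined with conditional independence, whereas the paper integrates the Wills-functional factors $\overline{V}(Y_i\cap Y_j)$ directly via Lemma~\ref{integral_lemma} and decouples the remaining product by H\"older/Cauchy--Schwarz rather than by conditioning --- the two devices are equivalent and produce the same powers of $n$.
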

\begin{proof}
Note that each $V_l$ is an additive functional on the ring $\mathcal{R}$ of finite unions of convex bodies. Moreover, for $l=0,\dots,d$, $V_l$ is monotone and nonnegative on the (smaller) class of convex bodies.
It follows that, for all $i,j,k=1,\dots,n$ which are pairwise distinct, and for any $Z$ which is a recombination of $\{X,X',\tilde{X}\}$,
\begin{align}
&\left|V_l\left(\left(F_n^{(j)}(Z)\cup Z_j\right)\right)-V_l\left(\left(F_n^{(j)}(Z)\cup X_j'\right)\right)\right|\notag\\
=&\left|V_l\left(F_n^{(j)}(Z)\right)+V_l(Z_j)-V_l\left(F_n^{(j)}(Z)\cap Z_j\right)-V_l\left(F_n^{(j)}(Z)\right)-V_l(X_j')+V_l\left(F_n^{(j)}(Z)\cap X_j'\right)\right|\notag\\
\leq & V_l(Z_j)+\left|V_l(X_j'\cap F_n^{(j)}(Z))\right|+V_l(X_j')+\left|V_l(Z_j\cap F_n^{(j)}(Z))\right|\label{add_one1}\\
\leq &4V_l(K)\hspace{-1mm}+\hspace{-1mm}\left|V_l(X_j'\cap F_n^{(i,j)}(Z))\right|+\left|V_l(X_j'\cap Z_i\cap F_n^{(i,j)}(Z))\right|+\left|V_l(Z_j\cap F_n^{(i,j)}(Z))\right|+\left|V_l(Z_j\cap Z_i\cap F_n^{(i,j)}(Z))\right|\label{add_one2}\\
\leq &8V_l(K)+\left|V_l(X_j'\cap F_n^{(i,j,k)}(Z))\right|+\left|V_l(X_j'\cap Z_i\cap F_n^{(i,j,k)}(Z))\right|+\left|V_l(Z_j\cap F_n^{(i,j,k)}(Z))\right|+\left|V_l(Z_j\cap Z_i\cap F_n^{(i,j,k)}(Z))\right|\notag\\
&+\left|V_l(X_j'\cap Z_k\cap F_n^{(i,j,k)}(Z))\right|+\left|V_l(X_j'\cap Z_i\cap Z_k\cap F_n^{(i,j,k)}(Z))\right|+\left|V_l(Z_j\cap Z_k\cap F_n^{(i,j,k)}(Z))\right|\notag\\
&+\left|V_l(Z_j\cap Z_i\cap Z_k\cap F_n^{(i,j,k)}(Z))\right|.\label{add_one3}
\end{align}
Moreover, for any $i,j,k=1,\dots,n$ which are pairwise distinct, a similar argument yields
\begin{align}
&\left|V_l\left(F_n^{(i,j)}(Z)\cup Z_i\cup Z_j\right)-V_l\left(F_n^{(i,j)}(Z)\cup Z_i\cup X_j'\right)-V_l\left(F_n^{(i,j)}(Z)\cup \tilde{X}_i\cup Z_j\right)+V_l\left(F_n^{(i,j)}\cup \tilde{X}_i\cup X_j'\right)\right|\notag\\
\leq &\left|V_l(Z_i\cap Z_j\cap F_n^{(i,j)}(Z))\right|+\left|V_l(Z_i\cap X_j'\cap F_n^{(i,j)}(Z))\right|+\left|V_l(\tilde{X}_i\cap Z_j\cap F_n^{(i,j)}(Z))\right|+\left|V_l(\tilde{X}_i\cap X_j'\cap F_n^{(i,j)}(Z))\right|\notag\\
&+\left|V_l(Z_i\cap Z_j)\right|+\left|V_l(Z_i\cap X_j')\right|+\left|V_l(\tilde{X}_i\cap Z_j)\right|+\left|V_l(\tilde{X}_i\cap X_j')\right|\label{add_one 4}\\
\leq &\left|V_l(Z_i\cap Z_j\cap F_n^{(i,j,k)}(Z))\right|+\left|V_l(Z_i\cap X_j'\cap  F_n^{(i,j,k)}(Z))\right|+\left|V_l(\tilde{X}_i\cap Z_j\cap F_n^{(i,j,k)}(Z))\right|+\left|V_l(\tilde{X}_i\cap X_j'\cap F_n^{(i,j,k)}(Z))\right|\notag\\
&+\left|V_l(Z_i\cap Z_j\cap Z_k\cap F_n^{(i,j,k)}(Z))\right|+\left|V_l(Z_i\cap X_j'\cap Z_k\cap F_n^{(i,j,k)}(Z))\right|+\left|V_l(\tilde{X}_i\cap Z_j\cap Z_k\cap F_n^{(i,j,k)}(Z))\right|\notag\\
&+\hspace{-0.5mm}\left|V_l(\tilde{X}_i\cap X_j'\cap Z_k\cap F_n^{(i,j,k)}(Z))\right|\hspace{-0.5mm}+\hspace{-0.5mm}2\left|V_l(Z_i\cap Z_j)\right|\hspace{-0.5mm}+\hspace{-0.5mm}2\left|V_l(Z_i\cap X_j')\right|+2\left|V_l(\tilde{X}_i\cap Z_j)\right|+2\left|V_l(\tilde{X}_i\cap X_j')\right|.\label{add_one5}
\end{align}
Therefore, for $m\geq 1$,
\begin{align}
\mathbbm{E}\|\Delta_1f(X)\|^m
\stackrel{(\ref{add_one1})}\leq & \frac{1}{n^{m/2}}\mathbbm{E}\left(2d!\overline{V}(K)+\sum_{l=0}^d\left(\left|V_l\left(X_1\cap F_n^{(1)}\right)\right|+\left|V_l\left(X_1'\cap F_n^{(1)}\right)\right|\right)\right)^{m}\notag\\
\leq &\frac{(2d+4)^{m-1}}{n^{m/2}}\left[2(d!)^{m}\overline{V}(K)^m+2\sum_{l=0}^d\mathbbm{E}\left[\left|V_l\left(X_1\cap F_n^{(1)})\right)\right|^m\right]\right]\notag\\
\stackrel{\text{Lemma }\ref{penrose_lemma}}\leq&\frac{(2d+4)^{m}(d!)^m\left(2^{2^d+2d}d^{d/2}\right)^m e^{2^m(2R+1)^d}\overline{V}(K)^{m}}{n^{m/2}}.\label{estimate1}
\end{align}
Similarly, for $m\geq 1$,
\begin{align}
&\sup_{(Y,Z)}\mathbbm{E}\left[\overline{V}(Y_1\cap Y_2)\|\Delta_1f(Z)\|^{m}\right]
\notag\\
\stackrel{(\ref{add_one2})}\leq&\frac{(4d+8)^{m-1}}{n^{m/2}}\hspace{-3mm}\sup_{(Y,Y',Z,Z')}\hspace{-3mm}\mathbbm{E}\Bigg\{\overline{V}(Y_1\cap Y_2)\bigg[4(d!)^m\overline{V}(K)^m+2\sum_{l=0}^d\bigg(\hspace{-1mm}\left|V_l(Y_1'\cap F_n^{(1,2)}(Z))\right|^{m}\hspace{-1mm}+\left|V_l(Z'_1\cap Z'_2\cap F_n^{(1,2)}(Z))\right|^{m}\hspace{-1mm}\bigg)\bigg]\Bigg\}\notag\\
\leq&\frac{(4d+8)^{m}(d!)^m\left(2^{2^d+2d}d^{d/2}\right)^m e^{2^m(2R+1)^d}\overline{V}(K)^{m+2}}{n^{m/2+1}},\label{estimate2}
\end{align}
where the last inequality follows from Lemmas \ref{integral_lemma} and \ref{penrose_lemma} and, for $m_1,m_2\geq 1$,
\begin{align}
&\sup_{(Y,Z)}\mathbbm{E}\bigg[\|\Delta_2f(Y)\|^{m_1}\|\tilde{\Delta}_1\Delta_2f(Z)\|^{m_2}\bigg]\notag\\
\underset{(\ref{add_one2})}{\stackrel{(\ref{add_one 4})}\leq} &\frac{(4d+8)^{m_1+m_2-2}}{n^{(m_1+m_2)/2}}\sup_{(Y,Y',Z,Z')}\mathbbm{E}\Bigg\{\bigg[4(d!)^{m_1}\overline{V}(K)^{m_1}+2\sum_{l=0}^d\bigg(\left|V_l(Y_2\cap F_n^{(1,2)}(Y))\right|^{m_1}+\left|V_l(Y'_1\cap Y'_2\cap F_n^{(1,2)}(Y))\right|^{m_1}\bigg)\bigg]\notag\\
&\hspace{7cm}\cdot\bigg[4(d!)^{m_2}\overline{V}(Z_1\cap Z_2)^{m_2}+4\sum_{l=0}^d\left|V_l(Z'_1\cap Z'_2\cap F_n^{(1,2)}(Z))\right|^{m_2}\bigg]\Bigg\}\notag\\
\leq& \frac{(4d+8)^{m_1+m_2}(d!)^{m_1+m_2}\left(2^{2^d}d^{d/2}4^{d}\right)^{m_1+m_2} e^{(2^{2m_1-1}+2^{2m_2-1})(2R+1)^d}\overline{V}(K)^{m_1+m_2+1}}{n^{(m_1+m_2)/2+1}}.\label{estimate3}
\end{align}
where the last inequality follows by the Cauchy-Schwarz inequality and Lemmas \ref{integral_lemma} and \ref{penrose_lemma}.\\
Now, let $(\dagger\dagger)=(Y,\bar{Y},\hat{Y},\tilde{Y},Y',\bar{Y}',\hat{Y}',\tilde{Y}',Z,\bar{Z},\hat{Z},Z',\bar{Z}',\hat{Z}')$. In the same manner as above, we obtain
\begin{align}
&\sup_{(Y,Y',Z,Z')}\mathbbm{E}\bigg[\|\Delta_2f(Y)\|^{m_1}\|\Delta_3f(Y')\|^{m_1}\|\tilde{\Delta}_1\Delta_2f(Z)\|^{m_2}\|\tilde{\Delta}_1\Delta_3f(Z')\|^{m_2}\bigg]\notag\\
\leq&  \frac{(8d+16)^{2m_1+2m_2-2}}{n^{m_1+m_2}}\sup_{(\dagger\dagger)}\mathbbm{E}\Bigg\{\bigg[8(d!)^{m_1}\overline{V}(K)^{m_1}+2\sum_{l=0}^d\bigg(\left|V_l(Y_2\cap F_n^{(1,2,3)}(Y))\right|^{m_1}\notag\\
&+\left|V_l(\bar{Y}_1\cap \bar{Y}_2\cap F_n^{(1,2,3)}(Y))\right|^{m_1}+\left|V_l(\hat{Y}_2\cap \hat{Y}_3\cap F_n^{(1,2,3)}(Y))\right|^{m_1}+\left|V_l(\tilde{Y}_1\cap \tilde{Y}_2\cap \tilde{Y}_3\cap F_n^{(1,2,3)}(Y))\right|^{m_1}\bigg)\bigg]\notag\\
&\cdot\bigg[8(d!)^{m_1}\overline{V}(K)^{m_1}+2\sum_{l=0}^d\bigg(\left|V_l(Y'_3\cap F_n^{(1,2,3)}(Y'))\right|^{m_1}+\left|V_l(\bar{Y}'_1\cap \bar{Y}'_3\cap F_n^{(1,2,3)}(Y'))\right|^{m_1}\notag\\
&+\left|V_l(\hat{Y}'_2\cap \hat{Y}'_3\cap F_n^{(1,2,3)}(Y'))\right|^{m_1}+\left|V_l(\tilde{Y}'_1\cap \tilde{Y}'_2\cap \tilde{Y}'_3\cap F_n^{(1,2,3)}(Y'))\right|^{m_1}\bigg)\bigg]\notag\\
&\cdot\Bigg[8(d!)^{m_2}\overline{V}(Z_1\cap Z_2)^{m_2}+4\sum_{l=0}^d\bigg(\left|V_l(\bar{Z}_1\cap \bar{Z}_2\cap F_n^{(1,2,3)}(Z))\right|^{m_2}+\left|V_l(\hat{Z}_1\cap \hat{Z}_2\cap \hat{Z}_3\cap F_n^{(1,2,3)}(Z))\right|^{m_2}\bigg)\Bigg]\notag\\
&\cdot\Bigg[8(d!)^{m_2}\overline{V}(Z_1'\cap Z_3')^{m_2}+4\sum_{l=0}^d\bigg(\left|V_l(\bar{Z}_1'\cap \bar{Z}_3'\cap F_n^{(1,2,3)}(Z'))\right|^{m_2}+\left|V_l(\hat{Z}_1'\cap \hat{Z}_2'\cap \hat{Z}_3'\cap F_n^{(1,2,3)}(Z'))\right|^{m_2}\bigg)\Bigg]\Bigg\}\notag\\
\leq &\frac{(8d+16)^{2m_1+2m_2}(d!)^{2m_1+2m_2}\left(2^{2^d}d^{d/2}4^{d}\right)^{2m_1+2m_2} e^{(2^{4m_1-1}+2^{4m_2-1})(2R+1)^d}\overline{V}(K)^{2m_1+2m_2+2}}{n^{m_1+m_2+2}}.\label{estimate4}
\end{align}
where the first inequality follows from (\ref{add_one3}), (\ref{add_one5}) and the last one follows from Lemmas \ref{integral_lemma} and \ref{penrose_lemma} and H\"older's inequality. Similarly,
\begin{align}
&\sup_{(Y,Y',Z,Z')}\mathbbm{E}\bigg\{\overline{V}(Y_1\cap Y_2)\overline{V}(Z_1\cap Z_3)\|\Delta_2f(Y')\|^{m}\|\Delta_3f(Z')\|^{m}\bigg\}\\
\leq& \frac{(8d+16)^{2m}(d!)^{2m}\left(2^{2^d}d^{d/2}4^{d}\right)^{2m} e^{2^{2m}(2R+1)^d}\overline{V}(K)^{2m+4}}{n^{m+2}}.\label{estimate5}
\end{align}
Furthermore, for $m_1,m_2,m_3\geq 1$,
\begin{align}
&\max\Bigg\{\hspace{-1mm}\sup_{(Y,Y',Z)}\hspace{-2mm}\mathbbm{E}\bigg[\|\tilde{\Delta}_1f(Y)\|^{m_1}\|\Delta_2f(Y')\|^{m_2}\|\tilde{\Delta}_1\Delta_2f(Z)\|^{m_3}\bigg],\hspace{-1mm}\sup_{(Y,Y',Z)}\hspace{-2mm}\mathbbm{E}\bigg[\|\Delta_1f(Y)\|^{m_1}\|\Delta_2f(Y')\|^{m_2}\|\tilde{\Delta}_1\Delta_2f(Z)\|^{m_3}\bigg]\Bigg\}\notag\\
\leq &\frac{(4d+8)^{m_1+m_2+m_3-3}}{n^{(m_1+m_2+m_3)/2}}\sup_{(Y,\tilde{Y},Y',\tilde{Y}',Z,Z')}\mathbbm{E}\Bigg\{\bigg[4(d!)^{m_1}\overline{V}(K)^{m_1}+2\sum_{l=0}^d\bigg(\left|V_l(Y_1\cap F_n^{(1,2)}(Y))\right|^{m_1}\notag\\
&+\left|V_l(\tilde{Y}_1\cap \tilde{Y}_2\cap F_n^{(1,2)}(Y))\right|^{m_1}\bigg)\bigg]\bigg[4(d!)^{m_2}\overline{V}(K)^{m_2}+2\sum_{l=0}^d\bigg(\left|V_l(Y_2'\cap F_n^{(1,2)}(Y'))\right|^{m_2}\notag\\
&+\left|\tilde{Y}_1'\cap\tilde{Y}_2'\cap F_n^{(1,2)}(Y'))\right|^{m_2}\bigg)\bigg]\bigg[4\overline{V}(Z_1\cap Z_2)^{m_3}+4\sum_{l=0}^d\left|V_l(Z'_1\cap Z'_2\cap F_n^{(1,2)}(Z))\right|^{m_3}\bigg]\Bigg\}\notag\\
\leq&                                                                                                                                                                                                                                                                                                                                                                    \frac{(4d+8)^{m_1+m_2+m_3}(d!)^{m_1+m_2+m_3}\left(2^{2^d}d^{d/2}4^{d}\right)^{m_1+m_2+m_3} e^{\frac{1}{3}(2^{3m_1}+2^{3m_2}+2^{3m_3})(2R+1)^d}\overline{V}(K)^{m_1+m_2+m_3+1}}{n^{(m_1+m_2+m_3)/2+1}},\label{estimate6}
\end{align}
where the first inequality follows from (\ref{add_one2}), (\ref{add_one 4}) and the second one from H\"older's inequality and Lemmas \ref{integral_lemma} and \ref{penrose_lemma}. Similarly,
\begin{align}
\sup_{(Y,Y',Z)}\mathbbm{E}\bigg\{\overline{V}(Y_1\cap Y_2)\|\Delta_1f(Y')\|^{m}\|\Delta_2f(Z)\|^{m}\bigg\}
\leq &\frac{(4d+8)^{2m}(d!)^{2m}\left(2^{2^d}d^{d/2}4^{d}\right)^{2m} e^{2^{2m}(2R+1)^d}\overline{V}(K)^{2m+2}}{n^{m+1}}.\label{estimate7}
\end{align}
The result now follows by (\ref{estimate1}) - (\ref{estimate7}) and the fact that $\overline{V}(K)=\sum_{j=0}^d{d\choose j}\kappa_dR^j\leq  3^d(R+1)^d$.
\end{proof}

\subsubsection{Concluding argument}
Using Lemma \ref{lemma_estimates}, and adopting the notation of Theorem \ref{main3}  we have that
\begin{align*}
\gamma_1\leq\frac{C_3^{(1)}(d)}{n^{1/2}}&\qquad\text{ and }\qquad \gamma_2\leq\frac{\sqrt{C_4^{(1)}(d)}}{n^{1/2}}
\end{align*}
and so
\begin{align}
\max(\gamma_1,\gamma_2)\leq 8(d+2)^3(d!)^3\left(2^{2^d+2d}d^{d/2}\right)^3(3(R+1))^{3d}e^{8(2R+1)^d}n^{-1/2}.\label{gamma_1_bound}
\end{align}
Now, note that, by (\ref{add_one 4}), we have that, for $i\neq j$,
\begin{align}
\mathbbm{1}_{[\tilde{\Delta}_i\Delta_jf(X)\neq 0]}\leq& \mathbbm{1}_{[ X_j\cap X_i\neq\emptyset]}+\mathbbm{1}_{[ X_j'\cap X_i\neq\emptyset]}+\mathbbm{1}_{[ X_j'\cap \tilde{X}_i\neq\emptyset]}+\mathbbm{1}_{[ X_j\cap \tilde{X}_i\neq\emptyset]}\nonumber\\
\leq &d!\left[\overline{V}(X_i\cap X_j)+\overline{V}(X_i\cap X_j')+\overline{V}(\tilde{X}_i\cap X_j)+\overline{V}(\tilde{X}_i\cap X_j')\right].\label{indic_bound}
\end{align}

Now, we look at $\gamma_3$ and $\gamma_4$ of Theorem \ref{main3}. For $A\subsetneq [n]$, let $k_{n,A}:=\frac{1}{{n\choose |A|}(n-|A|)}$. We have that, for $p=1,2$,
\begin{align}
\gamma_{p+2}^{p+2}=&\sum_{i=1}^n\sum_{A_1,A_2\subsetneq [n]}k_{n,A_1}k_{n,A_2}\sum_{j\not\in A_1}\sum_{k\not\in A_2}\Bigg[\frac{3}{2}\beta_{n,A_1,A_2}^{(1,p)}(i,j,k)\notag\\
&\hspace{6cm}+\left(\frac{9}{2}+\frac{9}{2p}\right)\bigg(\beta_{n,A_1,A_2}^{(2,p)}(i,j,k)+\beta_{n,A_1,A_2}^{(3,p)}(i,j,k)+\beta_{n,A_1,A_2}^{(4,p)}(i,j,k)\bigg)\Bigg].\label{beta_bound0}
\end{align}
 where, for $A_1,A_2\subsetneq[n]$, $i,j,k\in[n]$ such that $j\not\in A_1$, $k\not\in A_2$, and $p=1,2$,
\begin{align*}
\beta_{n,A_1,A_2}^{(1,p)}:=&\mathbbm{E}\Bigg\{\mathbbm{1}_{[\tilde{\Delta}_i\Delta_jf(X)\neq 0]}\mathbbm{1}_{[\tilde{\Delta}_i\Delta_kf(X)\neq 0]}\sqrt{\|\Delta_jf(X)\|^p+\|\tilde{\Delta}_i\Delta_jf(X)\|^p}\\
&\hspace{1cm}\cdot\sqrt{\|\Delta_kf(X)\|^p+\|\tilde{\Delta}_i\Delta_kf(X)\|^p}\left\|\Delta_{j}f(X^{A_1})\right\|\left\|\Delta_{k}f(X^{A_2})\right\|\left\|\Delta_{j}f(X)\right\|\left\|\Delta_{k}f(X)\right\|\Bigg\}\\
\beta_{n,A_1,A_2}^{(2,p)}:=&\mathbbm{E}\Bigg\{\sqrt{\|\Delta_jf(X)\|^p+\|\tilde{\Delta}_i\Delta_jf(X)\|^p}\\
&\hspace{1cm}\cdot\sqrt{\|\Delta_kf(X)\|^p+\|\tilde{\Delta}_i\Delta_kf(X)\|^p}\left\|\tilde{\Delta}_i\Delta_{j}f(X^{A_1})\right\|\left\|\Delta_{j}f(X)\right\|\left\|\tilde{\Delta}_i\Delta_{k}f(X^{A_2})\right\|\left\|\Delta_{k}f(X)\right\|\Bigg\}\\
\beta_{n,A_1,A_2}^{(3,p)}:=&\mathbbm{E}\Bigg\{\sqrt{\|\Delta_jf(X)\|^p+\|\tilde{\Delta}_i\Delta_jf(X)\|^p}\\
&\hspace{1cm}\cdot\sqrt{\|\Delta_kf(X)\|^p+\|\tilde{\Delta}_i\Delta_kf(X)\|^p}\left\|\tilde{\Delta}_i\Delta_{j}f(X)\right\|\left\|\Delta_{j}f(X^{A_1})\right\|\left\|\tilde{\Delta}_i\Delta_{k}f(X)\right\|\left\|\Delta_{k}f(X^{A_2})\right\|\Bigg\}\\
\beta_{n,A_1,A_2}^{(4,p)}:=&\mathbbm{E}\Bigg\{\sqrt{\|\Delta_jf(X)\|^p+\|\tilde{\Delta}_i\Delta_jf(X)\|^p}\\
&\hspace{1cm}\cdot\sqrt{\|\Delta_kf(X)\|^p+\|\tilde{\Delta}_i\Delta_kf(X)\|^p}\left\|\tilde{\Delta}_i\Delta_{j}f(X^{A_1})\right\|\left\|\tilde{\Delta}_i\Delta_{j}f(X)\right\|\left\|\tilde{\Delta}_i\Delta_{k}f(X^{A_2})\right\|\left\|\tilde{\Delta}_i\Delta_{k}f(X)\right\|\Bigg\}.
\end{align*}
Now, suppose that $j=k=i\not\in A_1\cup A_2$. Then, using H\"older's inequality repeatedly,
\begin{align*}
\beta_{n,A_1,A_2}^{(1,p)}(i,j,k)\leq\mathbbm{E}\bigg\{\left(\|\Delta_jf(X)\|^{p}+\|\tilde{\Delta}_jf(X)\|^{p}\right)\left\|\Delta_{j}f(X^{A_1})\right\|\left\|\Delta_{j}f(X^{A_2})\right\|\|\Delta_jf(X)\|^2\bigg\}\leq &2\mathbbm{E}\left[\|\Delta_jf(X)\|^{p+4}\right]\\
\beta_{n,A_1,A_2}^{(2,p)}(i,j,k)\leq \mathbbm{E}\bigg\{\left(\|\Delta_jf(X)\|^{p}+\|\tilde{\Delta}_jf(X)\|^{p}\right)\left\|\tilde{\Delta}_jf(X^{A_1})\right\|\left\|\tilde{\Delta}_jf(X^{A_2})\right\|\left\|\Delta_jf(X)\right\|^2\bigg\}
\leq &2\mathbbm{E}\left[\|\Delta_1f(X)\|^{p+4}\right]\\
\beta_{n,A_1,A_2}^{(3,p)}(i,j,k)\leq  \mathbbm{E}\bigg\{\left(\|\Delta_jf(X)\|^{p}+\|\tilde{\Delta}_jf(X)\|^{p}\right)\left\|\Delta_jf(X^{A_1})\right\|\left\|\Delta_jf(X^{A_2})\right\|\left\|\tilde{\Delta}_jf(X)\right\|^2\bigg\}
\leq &2\mathbbm{E}\left[\|\Delta_1f(X)\|^{p+4}\right]\\
\beta_{n,A_1,A_2}^{(4,p)}(i,j,k)\leq \mathbbm{E}\bigg\{\left(\|\Delta_jf(X)\|^{p}+\|\tilde{\Delta}_jf(X)\|^{p}\right)\left\|\tilde{\Delta}_jf(X^{A_1})\right\|\left\|\tilde{\Delta}_jf(X^{A_2})\right\|\left\|\tilde{\Delta}_jf(X)\right\|^2\bigg\}
\leq &2\mathbbm{E}\left[\|\Delta_1f(X)\|^{p+4}\right]
\end{align*}
Therefore, applying Lemma \ref{lemma_estimates} and using the notation thereof, we obtain that
\begin{align}
\beta^{(l,p)}_{n,A_1,A_2}(i,j,k)\leq &\frac{2C_{p+4}^{(1)}(d)}{n^{p/2+2}}\quad\text{for all }l=1,\dots,4\text{ and }j=k=i\not\in A_1\cup A_2.\label{beta_bound1}
\end{align}
In what follows we shall use the following notation:
$$\tilde{S}_i(X):=\left(X_0,\dots,X_{i-1},\tilde{X}_i,X_{i+1},\dots,X_n\right).$$

Now, let $ i=k\neq j$ and $j\not\in A_1,k\not\in A_2$. Then, using (\ref{indic_bound}) and H\"older's inequality,
\begin{align}
\beta_{n,A_1,A_2}^{(1,p)}(i,j,k)
\leq &d!\mathbbm{E}\Bigg\{\left[\overline{V}(X_i\cap X_j)+\overline{V}(X_i\cap X_j')+\overline{V}(\tilde{X}_i\cap X_j)+\overline{V}(\tilde{X}_i\cap X_j')\right]\left(\|\Delta_if(X)\|^{p/2}+\|\tilde{\Delta}_if(X)\|^{p/2}\right)\nonumber\\
&\hspace{-1cm}\cdot\left((2^{p/2-1}+1)\|\Delta_jf(X)\|^{p/2}+2^{p/2-1}\|\Delta_jf(S_i(\tilde{X}))\|^{p/2}\right)\left\|\Delta_{j}f(X^{A_1})\right\|\left\|\Delta_{i}f(X^{A_2})\right\|\left\|\Delta_{j}f(X)\right\|\left\|\Delta_{i}f(X)\right\|\Bigg\}\nonumber\\
\leq & 8\left(2^{p/2}+1\right)d!\sup_{(Y,Z)}\mathbbm{E}\Big[\overline{V}(Y_1\cap Y_2)\|\Delta_1f(Z)\|^{p+4}\Big]\label{beta1}\\
\beta_{n,A_1,A_2}^{(2,p)}(i,j,k)\leq &\mathbbm{E}\Bigg\{\left(\|\Delta_if(X)\|^{p/2}+\|\tilde{\Delta}_if(X)\|^{p/2}\right)\left((2^{p/2-1}+1)\|\Delta_jf(X)\|^{p/2}+2^{p/2-1}\|\Delta_jf(S_i(\tilde{X}))\|^{p/2}\right)\nonumber\\
&\hspace{6cm}\cdot\left\|\tilde{\Delta}_if(X^{A_1})\right\|\left\|\Delta_{i}f(X)\right\|\left\|\tilde{\Delta}_i\Delta_{j}f(X^{A_2})\right\|\left\|\Delta_{j}f(X)\right\|\Bigg\}\nonumber\\
\leq &(2^{p/2}+1)\sup_{(Y,Y',Y'',Z)}\mathbbm{E}\Bigg\{\|\Delta_1f(Y)\|^{p/2+1}\|\tilde{\Delta}_1f(Y')\|\|\Delta_2f(Y'')\|^{p/2+1}\|\tilde{\Delta}_1\Delta_2f(Z)\|\Bigg\}\nonumber\\
&+(2^{p/2}+1)\sup_{(Y,Y',Y'',Z)}\mathbbm{E}\Bigg\{\|\Delta_1f(Y)\|\|\tilde{\Delta}_1f(Y')\|^{p/2+1}\|\Delta_2f(Y'')\|^{p/2+1}\|\tilde{\Delta}_1\Delta_2f(Z)\|\Bigg\}\notag\\
\leq &(2^{p/2}+1)\sup_{(Y,Y',Y'',Z)}\sqrt{\mathbbm{E}\Bigg\{\|\tilde{\Delta}_1f(Y)\|^{2}\|\Delta_2f(Y'')\|^{p/2+1}\|\tilde{\Delta}_1\Delta_2f(Z)\|\Bigg\}}\notag\\
&\hspace{5cm}\cdot\sqrt{\mathbbm{E}\Bigg\{\|\Delta_1f(Y')\|^{p+2}\|\Delta_2f(Y'')\|^{p/2+1}\|\tilde{\Delta}_1\Delta_2f(Z)\|\Bigg\}}\notag\\
&+(2^{p/2}+1)\sup_{(Y,Y',Y'',Z)}\sqrt{\mathbbm{E}\Bigg\{\|\Delta_1f(Y)\|^{2}\|\Delta_2f(Y'')\|^{p/2+1}\|\tilde{\Delta}_1\Delta_2f(Z)\|\Bigg\}}\notag\\
&\hspace{5cm}\cdot\sqrt{\mathbbm{E}\Bigg\{\|\tilde{\Delta}_1f(Y')\|^{p+2}\|\Delta_2f(Y'')\|^{p/2+1}\|\tilde{\Delta}_1\Delta_2f(Z)\|\Bigg\}}\label{beta2}\\
\beta_{n,A_1,A_2}^{(3,p)}(i,j,k)\leq &(2^{p/2}+1)\sup_{(Y,Y',Y'',Z)}\sqrt{\mathbbm{E}\Bigg\{\|\tilde{\Delta}_1f(Y)\|^{2}\|\Delta_2f(Y'')\|^{p/2+1}\|\tilde{\Delta}_1\Delta_2f(Z)\|\Bigg\}}\notag\\
&\hspace{5cm}\cdot\sqrt{\mathbbm{E}\Bigg\{\|\Delta_1f(Y)\|^{p+2}\|\Delta_2f(Y'')\|^{p/2+1}\|\tilde{\Delta}_1\Delta_2f(Z)\|\Bigg\}}\notag\\
&+(2^{p/2}+1)\sup_{(Y,Y',Y'',Z)}\sqrt{\mathbbm{E}\Bigg\{\|\Delta_1f(Y)\|^{2}\|\Delta_2f(Y'')\|^{p/2+1}\|\tilde{\Delta}_1\Delta_2f(Z)\|\Bigg\}}\notag\\
&\hspace{5cm}\cdot\sqrt{\mathbbm{E}\Bigg\{\|\tilde{\Delta}_1f(Y)\|^{p+2}\|\Delta_2f(Y'')\|^{p/2+1}\|\tilde{\Delta}_1\Delta_2f(Z)\|\Bigg\}}\label{beta3}\\
\beta_{n,A_1,A_2}^{(4,p)}(i,j,k)\leq&\mathbbm{E}\Bigg\{\left(\|\Delta_if(X)\|^{p/2}+\|\tilde{\Delta}_if(X)\|^{p/2}\right)\left((2^{p/2-1}+1)\|\Delta_jf(X)\|^{p/2}+2^{p/2-1}\|\Delta_jf(S_i(\tilde{X}))\|^{p/2}\right)\nonumber\\
&\hspace{5cm}\left\|\tilde{\Delta}_i\Delta_{j}f(X^{A_1})\right\|\left\|\tilde{\Delta}_i\Delta_{j}f(X)\right\|\left\|\tilde{\Delta}_i\Delta_{j}f(X^{A_2})\right\|\left\|\tilde{\Delta}_i\Delta_{j}f(X)\right\|\Bigg\}\nonumber\\
\leq&(2^{p/2}+1)\sup_{(Y,Y',Z)}\mathbbm{E}\Bigg\{\left(\|\Delta_1f(Y)\|^{p/2}+\|\tilde{\Delta}_1f(Y)\|^{p/2}\right)\|\Delta_2f(Y')\|^{p/2}\left\|\tilde{\Delta}_1\Delta_{2}f(Z)\right\|^{4}\Bigg\}.\label{beta4}
\end{align}
The same argument applies and the same bounds (\ref{beta1})-(\ref{beta4}) hold if $i=j\neq k$ and $j\not\in A_1,k\not\in A_2$. Therefore, applying Lemma \ref{lemma_estimates} and using the notation thereof, for $(i=j\neq k\text{ or }i=k\neq j),j\not\in A_1,k\not\in A_2$,
\begin{align}
&\beta_{n,A_1,A_2}^{(1,p)}(i,j,k)\leq 8\left(2^{p/2}+1\right)(d!)\frac{C_{p+4}^{(2)}(d)}{n^{p/2+3}};\quad \beta_{n,A_1,A_2}^{(4,p)}(i,j,k)\leq 2\left(2^{p/2}+1\right)\frac{C_{p/2,p/2,4}^{(6)}(d)}{n^{p/2+3}};\notag\\
&\beta_{n,A_1,A_2}^{(2,p)}(i,j,k)+\beta_{n,A_1,A_2}^{(3,p)}(i,j,k)\leq 4\left(2^{p/2}+1\right)\frac{\sqrt{C_{2,p/2+1,1}^{(6)}(d)C_{p+2,p/2+1,1}^{(6)}(d)}}{n^{p/2+3}}.\label{beta_bound2}
\end{align}
Now, if $i\neq k=j$, $j\not\in A_1\cup A_2$, we apply (\ref{indic_bound}) and H\"older's inequality to obtain
\begin{align*}
\beta_{n,A_1,A_2}^{(1,p)}(i,j,k)\leq &d!\mathbbm{E}\Bigg\{\left[\overline{V}(X_i\cap X_j)+\overline{V}(X_i\cap X_j')+\overline{V}(\tilde{X}_i\cap X_j)+\overline{V}(\tilde{X}_i\cap X_j')\right]\\
&\cdot\left((2^{p-1}+1)\|\Delta_jf(X)\|^{p}+2^{p-1}\|\Delta_jf(S_i(\tilde{X}))\|^{p}\right)\left\|\Delta_{j}f(X^{A_1})\right\|\left\|\Delta_{j}f(X^{A_2})\right\|\left\|\Delta_{j}f(X)\right\|^2\Bigg\}\\
\leq &4(2^p+1)d!\sup_{(Y,Z)}\mathbbm{E}\Big\{\overline{V}(Y_1\cap Y'_2)\|\Delta_1f(Z)\|^{p+4}\Big\}\\
\beta_{n,A_1,A_2}^{(2,p)}(i,j,k)\leq&\mathbbm{E}\Bigg\{\left(\|\tilde{\Delta}_i\Delta_jf(X)\|^p+\left\|\Delta_{j}f(X)\right\|^p\right)\left\|\tilde{\Delta}_i\Delta_{j}f(X^{A_1})\right\|\left\|\Delta_{j}f(X)\right\|^2\left\|\tilde{\Delta}_i\Delta_{j}f(X^{A_2})\right\|\Bigg\}\\
\leq &\sup_{(Y,Y')}\mathbbm{E}\bigg\{\left\|\Delta_{j}f(Y)\right\|^{2}\left\|\tilde{\Delta}_i\Delta_{j}f(Y')\right\|^{p+2}\bigg\}+\sup_{(Y,Y')}\mathbbm{E}\bigg\{\left\|\Delta_{j}f(Y)\right\|^{p+2}\left\|\tilde{\Delta}_i\Delta_{j}f(Y')\right\|^{2}\bigg\}\\
\beta_{n,A_1,A_2}^{(3,p)}(i,j,k)\leq &\sup_{(Y,Y')}\mathbbm{E}\bigg\{\left\|\Delta_{j}f(Y)\right\|^{2}\left\|\tilde{\Delta}_i\Delta_{j}f(Y')\right\|^{p+2}\bigg\}+\sup_{(Y,Y')}\mathbbm{E}\bigg\{\left\|\Delta_{j}f(Y)\right\|^{p+2}\left\|\tilde{\Delta}_i\Delta_{j}f(Y')\right\|^{2}\bigg\}\\
\beta_{n,A_1,A_2}^{(4,p)}(i,j,k)\leq &\mathbbm{E}\bigg\{\left(\|\Delta_jf(X)\|^p+\|\tilde{\Delta}_i\Delta_jf(X)\|^p\right)\left\|\tilde{\Delta}_i\Delta_{j}f(X^{A_1})\right\|\left\|\tilde{\Delta}_i\Delta_{j}f(X)\right\|^2\left\|\tilde{\Delta}_i\Delta_{j}f(X^{A_2})\right\|\bigg\}\\
\leq & (2^p+1)\sup_{(Y,Y')}\mathbbm{E}\Big\{\|\Delta_jf(Y)\|^p\|\tilde{\Delta}_i\Delta_jf(Y')\|^{4}\Big\}.
\end{align*}
Therefore, applying Lemma \ref{lemma_estimates} and using the notation thereof,
\begin{align}
&\beta_{n,A_1,A_2}^{(1,p)}(i,j,k)\leq\frac{4(2^p+1)d!C_{p+4}^{(2)}(d)}{n^{p/2+3}};\quad \beta_{n,A_1,A_2}^{(2,p)}(i,j,k)+\beta_{n,A_1,A_2}^{(3,p)}(i,j,k)\leq \frac{2\left(C_{2,p+2}^{(3)}(d)+C_{p+2,2}^{(3)}(d)\right)}{n^{p/2+3}};\notag\\
&\beta_{n,A_1,A_2}^{(4,p)}(i,j,k)\leq\frac{(2^p+1)C_{p,4}^{(3)}(d)}{n^{p/2+3}},\quad\text{if }i\neq k=j,\,j\not\in A_1\cup A_2.\label{beta_bound3}
\end{align}
Finally, if $i,j,k$ are pairwise distinct and $k\not\in A_2$, $j\not\in A_1$, using (\ref{indic_bound}) and  H\"older's inequality we have:
\begin{align*}
\beta_{n,A_1,A_2}^{(1,p)}(i,j,k)\leq &16(2^{p/2}+1)^2d!\sup_{(Y,Y',Z,Z')}\mathbbm{E}\bigg\{\overline{V}(Y_1\cap Y_2)\overline{V}(Z_1\cap Z_3)\|\Delta_2f(Y')\|^{p/2+2}\|\Delta_3f(Z')\|^{p/2+2}\bigg\}\\
\beta_{n,A_1,A_2}^{(2,p)}(i,j,k)\leq &(2^{p/2}+1)^2\sup_{(Y,Y',Y'',Z)}\mathbbm{E}\bigg\{\|\tilde{\Delta}_1\Delta_2f(Y)\|\|\tilde{\Delta}_1\Delta_3f(Y')\|\|\Delta_2f(Y'')\|^{p/2+1}\|\Delta_3f(Z)\|^{p/2+1}\bigg\}\\
\beta_{n,A_1,A_2}^{(3,p)}(i,j,k)\leq &(2^{p/2}+1)^2\sup_{(Y,Y',Y'',Z)}\mathbbm{E}\bigg\{\|\tilde{\Delta}_1\Delta_2f(Y)\|\|\tilde{\Delta}_1\Delta_3f(Y')\|\|\Delta_2f(Y'')\|^{p/2+1}\|\Delta_3f(Z)\|^{p/2+1}\bigg\}\\
\beta_{n,A_1,A_2}^{(4,p)}(i,j,k)\leq &(2^{p/2}+1)^2\sup_{(Y,Y',Z,Z')}\mathbbm{E}\bigg\{\|\Delta_2f(Y)\|^{p/2}\|\Delta_3f(Y')\|^{p/2}\|\tilde{\Delta}_1\Delta_2f(Z)\|^{2}\|\tilde{\Delta}_1\Delta_3f(Z')\|^{2}\bigg\}.
\end{align*}

Therefore, applying Lemma \ref{lemma_estimates} and using the notation thereof,
\begin{align}
&\beta_{n,A_1,A_2}^{(1,p)}(i,j,k)\leq\frac{16(2^{p/2}+1)^2d!C_{p/2+2}^{(5)}(d)}{n^{p/2+4}};\quad \beta_{n,A_1,A_2}^{(2,p)}(i,j,k)+\beta_{n,A_1,A_2}^{(3,p)}(i,j,k)\leq \frac{2(2^{p/2}+1)^2C_{p/2+1,1}^{(4)}(d)}{n^{p/2+4}};\notag\\
&\beta_{n,A_1,A_2}^{(4,p)}(i,j,k)\leq\frac{(2^{p/2}+1)^2C_{p/2,2}^{(4)}(d)}{n^{p/2+4}},\quad \text{if } i,j,k \text{ are pairwise distinct and } k\not\in A_2, j\not\in A_1.\label{beta_bound4}
\end{align}
 It now follows from (\ref{beta_bound0}), (\ref{beta_bound1}), (\ref{beta_bound2}), (\ref{beta_bound3}) and (\ref{beta_bound4}) that, for $p=1,2$,
\begin{align}
\gamma_{p+2}\leq&\Bigg\{\frac{3}{2}\bigg[2C_{p+4}^{(1)}(d)+\left(8\left(2^{p/2}+1\right)+4(2^p+1)\right)(d!)C_{p+2}^{(2)}(d)+16(2^{p/2}+1)^2(d!)C_{p/2+2}^{(5)}(d)\bigg]\notag\\
&+\left(\frac{9}{2}+\frac{9}{2p}\right)\bigg[6C_{p+4}^{(1)}(d)+4\left(2^{p/2}+1\right)\sqrt{C_{2,p/2+1,1}^{(6)}(d)C_{p+2,p/2+1,1}^{(6)}(d)}+2\left(2^{p/2}+1\right)C_{p/2,p/2,4}^{(6)}(d)\notag\\
&+2\left(C_{2,p+2}^{(3)}(d)+C_{p+2,p}^{(3)}(d)\right)+(2^p+1)C_{p,4}^{(3)}(d)+2(2^{p/2}+1)^2C_{p/2+1,1}^{(4)}(d)+(2^{p/2}+1)^2C_{p/2,2}^{(4)}(d)\bigg]\Bigg\}^{1/(p+2)} \hspace{-3mm}n^{-1/2}\notag\\
\leq& 215(d+2)^{5/3}(d!)^{5/3}\left(2^{2^d+2d}d^{d/2}\right)^{5/3}(3(R+1))^{5d/3}e^{456(2R+1)^d}n^{-1/2}
\label{gamma_p_bound}
\end{align}
Now, we use an estimate similar to (\ref{indic_bound}) together with Lemmas \ref{lemma_bn} and \ref{lemma_estimates}. Using the notation theoreof,
\begin{align*}
B_n(f)\leq& 4d!\sup_{(Y,Z,Z')}\mathbbm{E}\bigg[\overline{V}(Y_1\cap Y_2)\|\Delta_1f(Z)\|^2\|\Delta_2f(Z')\|^2\bigg]\leq\frac{4d!C_{4}^{(2)}(d)}{n^3};\\
B_n'(f)\leq&16(d!)^2\sup_{(Y,Y',Z,Z')}\mathbbm{E}\bigg[\overline{V}(Y_1\cap Y_2)\overline{V}(Y'_1\cap Y'_3)\|\Delta_1f(Z)\|^2\|\Delta_2f(Z')\|^2\bigg]\leq \frac{16(d!)^2C_2^{(5)}(d)}{n^4}.
\end{align*}
and therefore, applying the estimate on $\mathbbm{E}\|\Delta_1f(X)\|^4$ from Lemma \ref{lemma_estimates} and noting that $\mathbbm{E}T=\Sigma_n$, we obtain from Lemma \ref{lemma_bn} that
\begin{align}
\sqrt{\mathbbm{E}\left\|\mathbbm{E}[T|X]-\Sigma_n\right\|^2_{H.S.}}\leq&\left(8\sqrt{d!C_4^{(2)}(d)}+16d!\sqrt{C_2^{(5)}(d)}+4\sqrt{C_4^{(1)}(d)}\right)n^{-1/2}\notag\\
\leq& 1168(d!)^3(d+2)^2\left(2^{2^d+2d}d^{d/2}\right)^2\left(3(R+1)\right)^{4d}e^{8(2R+1)^d}n^{-1/2}\label{t_x_bound}
\end{align}
Moreover, notice that, by Theorem \ref{pos_def_teorem}, matrix $\Sigma_n$ is positive definite for large enough $n$ and, for all $n>\overline{V}(K)e$,
\begin{align}\label{matrix_bound}
\left\|\Sigma-\Sigma_n\right\|_{H.S.}\leq 116\cdot 108^d(R+1)^{4d}e^{6\cdot 9^d(R+1)^{2d}}(d!)^2(d+1) n^{-1/d}.
\end{align}
The result now follows from (\ref{gamma_1_bound}), (\ref{gamma_p_bound}) - (\ref{matrix_bound}).\qed

\section{Proofs of the results about structures with local dependence}\label{proofs_ap}
\subsection{Proof of Theorem \ref{main1}}
\subsubsection{Step 1 - an upper bound on $\sqrt{\mathbbm{E}\left\|\mathbbm{E}[T|X]-\mathbbm{E}[T]\right\|_{H.S.}^2}$}
In this step, we follow a strategy analogous to the one of the proof of \cite{new_stein}. First, let $k_{n,A}:=\frac{1}{{n\choose |A|}(n-|A|)}$ for $A\subsetneq[n]$. Using (\ref{t_bound}) and (\ref{t_bound1}), we have that
\begin{align}\label{proof_thm31_1}
\sqrt{\mathbbm{E}\left\|\mathbbm{E}[T|X]-\mathbbm{E}[T]\right\|_{H.S.}^2}\leq\frac{1}{\sqrt{8}}\sum_{A\subsetneq [n]}k_{n,A}\sqrt{\sum_{i=1}^{n}\mathbbm{E}\left\|\tilde{\Delta}_iT_A\right\|_{H.S.}^2}.
\end{align}
Moreover, a straightforward adaptation of the proof of \cite[Lemma 4.5]{new_stein} yields the following bound:
\begin{align}\label{proof_thm31_2}
\mathbbm{E}\left\|\tilde{\Delta}_iT_A\right\|_{H.S.}^2\leq c\left(\mathbbm{E}\left(M^8\right)\right)^{1/2}\left(\mathbbm{E}\left(\delta^4\right)\right)^{1/2}\left(\mathbbm{1}_{\left[i\not\in A\right]}+\sqrt{\frac{n-|A|}{n}}\right),
\end{align}
for some universal constant $c>0$ which does not depend on $n$, $d$, $A$ or $i$ and for any $i\in[n]$ and $A\subsetneq[n]$. Using (\ref{proof_thm31_1}) and (\ref{proof_thm31_2}) we therefore obtain that
\begin{align}
\sqrt{\mathbbm{E}\left\|\mathbbm{E}[T|X]-\mathbbm{E}[T]\right\|_{H.S.}^2}\leq&\frac{\sqrt{c}}{\sqrt{8}}\sum_{A\subsetneq [n]}k_{n,A}\sqrt{\left(\mathbbm{E}\left(M^8\right)\right)^{1/2}\left(\mathbbm{E}\left(\delta^4\right)\right)^{1/2}\left(n-|A|+\sqrt{n(n-|A|)}\right)}\notag\\
\leq & \frac{\sqrt{c}}{2}\left(\mathbbm{E}\left(M^8\right)\right)^{1/4}\left(\mathbbm{E}\left(\delta^4\right)\right)^{1/4}\sum_{A\subsetneq [n]}\frac{\left(n(n-|A|)\right)^{1/4}}{{n\choose |A|}(n-|A|)}\notag\\
= &\frac{\sqrt{c}}{2}\left(\mathbbm{E}\left(M^8\right)\right)^{1/4}\left(\mathbbm{E}\left(\delta^4\right)\right)^{1/4}\sum_{k=1}^n n^{1/4}k^{-3/4}\notag\\
\leq &2\sqrt{c}\left(\mathbbm{E}\left(M^8\right)\right)^{1/4}\left(\mathbbm{E}\left(\delta^4\right)\right)^{1/4} n^{1/2}.  \label{proof_thm31_3}
\end{align}

\subsubsection{Step 2 - an upper bound on $\gamma_3$ and $\gamma_4$ of Theorem \ref{main3}}
Fix $A\subsetneq [n]$ and for $i\in[n]$ let
\begin{align*}
&K_i:=\max_{j\not\in A}\sqrt{\|\Delta_jf(X)\|+\|\tilde{\Delta}_i\Delta_jf(X)\|}\cdot\|\Delta_jf(X^A)\|\,\|\Delta_jf(X)\|,\\
&\tilde{K}_i:=\max_{j\not\in A}\sqrt{\|\Delta_jf(X)\|^2+\|\tilde{\Delta}_i\Delta_jf(X)\|^2}\cdot\|\Delta_jf(X^A)\|\,\|\Delta_jf(X)\|.
\end{align*}
We can bound the terms appearing in the definition of $\gamma_3$ and $\gamma_4$ applying the strategy used to bound the term $h_i$ in the proof of \cite[Lemma 4.5]{new_stein}. In particular, just like in display \cite[(21)]{new_stein} and the seventh display on page 25 of \cite{new_stein}, we can derive the following estimates:
\begin{align}
&\mathbbm{E}\left[\left(\sum_{j\not\in A}\mathbbm{1}_{[\tilde{\Delta}_i\Delta_jf(X)\neq 0]}\sqrt{\|\Delta_jf(X)\|+\|\tilde{\Delta}_i\Delta_jf(X)\|}\cdot\|\Delta_jf(X^A)\|\,\|\Delta_jf(X)\|\right)^2\right]\notag\\
\leq &\sqrt{\mathbbm{E}(K_i^4)}\tilde{c}\left(\mathbbm{E}\delta^4\right)^{1/2}\left(\mathbbm{1}_{[i\not\in A]}+\sqrt{\frac{n-|A|}{n}}\right)\notag\\
\leq&\tilde{c}\,\sqrt{\mathbbm{E}\left[\max_{j\not\in A}\left(2\|\Delta_jf(X)\|+\|\Delta_jf(\tilde{X})\|\right)^2\left(\|\Delta_jf(X^A)\|\,\|\Delta_jf(X)\|\right)^4\right]}\left(\mathbbm{E}\delta^4\right)^{1/2}\left(\mathbbm{1}_{[i\not\in A]}+\sqrt{\frac{n-|A|}{n}}\right)\notag\\
\leq& 3\tilde{c}\sqrt{\mathbbm{E}M^{10}}\left(\mathbbm{E}\delta^4\right)^{1/2}\left(\mathbbm{1}_{[i\not\in A]}+\sqrt{\frac{n-|A|}{n}}\right).\label{proof_thm31_6}
\end{align}
for a universal constant $\tilde{c}>0$ which does not depend on $n$, $d$, $A$ or $i$. Similarly,
\begin{align*}
&\mathbbm{E}\left(\sum_{j\not\in A}\mathbbm{1}_{[\tilde{\Delta}_i\Delta_jf(X)\neq 0]}\sqrt{\|\Delta_jf(X)\|^2+\|\tilde{\Delta}_i\Delta_jf(X)\|^2}\cdot\|\Delta_jf(X^A)\|\,\|\Delta_jf(X)\|\right)^2\\
\leq &\sqrt{\mathbbm{E}(\tilde{K}_i^4)}\tilde{c}\left(\mathbbm{E}\delta^4\right)^{1/2}\left(\mathbbm{1}_{[i\not\in A]}+\sqrt{\frac{n-|A|}{n}}\right)\\
\leq&\tilde{c}\,\left[\mathbbm{E}\left[\max_{j\not\in A}\left(3\|\Delta_jf(X)\|^2+2\|\Delta_jf(\tilde{X})\|^2\right)^2\left(\|\Delta_jf(X^A)\|\,\|\Delta_jf(X)\|\right)^4\right]\right]^{1/2}\left(\mathbbm{E}\delta^4\right)^{1/2}\left(\mathbbm{1}_{[i\not\in A]}+\sqrt{\frac{n-|A|}{n}}\right)\\
\leq& 5\tilde{c}\sqrt{\mathbbm{E}M^{12}}\left(\mathbbm{E}\delta^4\right)^{1/2}\left(\mathbbm{1}_{[i\not\in A]}+\sqrt{\frac{n-|A|}{n}}\right).
\end{align*}
We can treat the other summands appearing in the definitions of $\gamma_3$ and $\gamma_4$ in a similar way.
Therefore, using (\ref{proof_thm31_6}) and Minkowski's inequality, we obtain that for a universal constant $\bar{c}>0$ which does not depend on $n$ or $d$, 
\begin{align}
\gamma_3^3\leq &\bar{c}\sum_{i=1}^n\left(\sum_{A\subsetneq [n]}\frac{1}{{n\choose|A|}(n-|A|)}\left(\mathbbm{E}M^{10}\right)^{1/4}\left(\mathbbm{E}\delta^4\right)^{1/4}\left(\mathbbm{1}_{[i\not\in A]}+\left(\frac{n-|A|}{n}\right)^{1/4}\right)\right)^2\notag\\
\leq& 2\bar{c}\left(\mathbbm{E}M^{10}\right)^{1/2}\left(\mathbbm{E}\delta^4\right)^{1/2}\left\lbrace\sum_{i=1}^n\left(\sum_{A\subsetneq [n]}\frac{\mathbbm{1}_{[i\not\in A]}}{{n\choose|A|}(n-|A|)}\right)^2+\left(\sum_{A\subsetneq [n]}\frac{n^{1/4}(n-|A|)^{1/4}}{{n\choose|A|}(n-|A|)}\right)^2\right\rbrace\notag\\
\leq &2\bar{c}\left(\mathbbm{E}M^{10}\right)^{1/2}\left(\mathbbm{E}\delta^4\right)^{1/2}\left\lbrace \left(\sum_{A\subsetneq [n]}\frac{(n-|A|)^{1/2}}{{n\choose|A|}(n-|A|)}\right)^2+\left(\sum_{A\subsetneq [n]}\frac{n^{1/4}(n-|A|)^{1/4}}{{n\choose|A|}(n-|A|)}\right)^2\right\rbrace\notag\\
\leq& 4\bar{c}\left(\mathbbm{E}M^{10}\right)^{1/2}\left(\mathbbm{E}\delta^4\right)^{1/2} \left(\sum_{A\subsetneq [n]}\frac{n^{1/4}(n-|A|)^{1/4}}{{n\choose|A|}(n-|A|)}\right)^2\notag\\
\leq &64\bar{c}\left(\mathbbm{E}M^{10}\right)^{1/2}\left(\mathbbm{E}\delta^4\right)^{1/2}n\label{proof_thm31_4}
\end{align}
and, similarly, for a universal constant $\hat{c}>$ that does not depend on $n$ or $d$,
\begin{align}
\gamma_4^4\leq &\hat{c}\left(\mathbbm{E}M^{12}\right)^{1/2}\left(\mathbbm{E}\delta^4\right)^{1/2}n.\label{proof_thm31_5}
\end{align}
\subsubsection{Conclusion}
The result now follows from Theorems \ref{main} and \ref{main3}, together with (\ref{proof_thm31_3}), (\ref{proof_thm31_4}) and (\ref{proof_thm31_5}). \qed

\subsection{Proof of Theorem \ref{main2}}

It is shown in the proof of \cite[Theorem 3.4]{new_stein} that a function of the form of the form (\ref{function}) admits a symmetric interaction rule $G$ posessing a symmetric extension on $\mathcal{X}^{n+4}$ whose maximum degree is bounded by $\alpha(d)(k+1)(k+5)$.
We let
$$M_f:=\max_l\|f_l(X)\|\vee\max_{j,l}\|f_l(X^j)\|.$$
By a straightforward adaptation of the proof of \cite[Theorem 3.4]{new_stein}, the random variable $M$ of Theorem \ref{main1} can be bounded by $4n^{-1/2}\alpha(d)kM_f$, under the assumptions of Theorem \ref{main2}. Next,  for any $p\geq 8$, it holds that
$$\mathbbm{E}[M_f^8]\leq \left[\mathbbm{E} M_f^p\right]^{8/p}\leq \left[\sum_l\mathbbm{E}\|f_l(X)\|^p+\sum_{j,l}\mathbbm{E}\|f_l(X^j)\|^p\right]^{8/p}\leq (n^2+n)^{8/p}\eta_p^{8/p}.$$
and, similarly, for any $r\leq p$, $\mathbbm{E}\|\Delta_j f(X)\|^r\leq C\alpha(d)^r k^r n^{-r/2}(n\,\eta_p)^{r/p}$.

Furthermore, for any $p\geq 12$,
\begin{align*}
\mathbbm{E}[M_f^{10}]\leq \left[\mathbbm{E} M_f^p\right]^{10/p}\leq \left[\sum_l\mathbbm{E}\|f_l(X)\|^p+\sum_{j,l}\mathbbm{E}\|f_l(X^j)\|^p\right]^{10/p}\leq (n^2+n)^{10/p}\eta_p^{10/p};\\
\mathbbm{E}[M_f^{12}]\leq \left[\mathbbm{E} M_f^p\right]^{12/p}\leq \left[\sum_l\mathbbm{E}\|f_l(X)\|^p+\sum_{j,l}\mathbbm{E}\|f_l(X^j)\|^p\right]^{12/p}\leq (n^2+n)^{12/p}\eta_p^{12/p}.
\end{align*}
Combining the bounds for all the terms and using Theorem \ref{main1} completes the proof.\qed

\section*{Acknowledgements}
The authors were supported by the \textbf{FNR grant FoRGES (R-AGR- 3376-10)} at Luxembourg University. This work is also part of project \textbf{Stein-ML} that has received funding from the European Union’s Horizon 2020 research and innovation programme under the Marie Skłodowska-Curie grant agreement \textbf{No 101024264}.

\bibliographystyle{alpha}
\bibliography{Bibliography}
\end{document}